\title{Deformation Theory of Courant Algebroids via the Rothstein Algebra}
\author{\textbf{Frank Keller}\thanks{E-mail: F.Keller@impan.pl}
  \\[0.1cm]
  Institute of Mathematics\\
  Polish Academy of Sciences\\
  ul. {\'S}niadeckich 8\\
  00-956 Warszawa 10\\
  Poland\\[0.5cm]
  \addtocounter{footnote}{5}
  \textbf{Stefan Waldmann}\thanks{E-mail: Stefan.Waldmann@physik.uni-freiburg.de}
  \\[0.1cm]
  Fakult{\"a}t f{\"u}r Mathematik und Physik\\
  Albert-Ludwigs-Universit{\"a}t Freiburg\\
  Physikalisches Institut\\
  Hermann Herder Stra{\ss}e 3\\
  D 79104 Freiburg\\
  Germany}
\date{August 2011}
\renewcommand{\mathbb}[1]{\mathbbm{#1}} % use nicer bbm fonts
\newcounter{comment}
\newcommand{\id}         {\operatorname{\mathsf{id}}}   
\newcommand{\sign}       {\operatorname{\mathrm{sign}}}
\newcommand{\ad}         {\operatorname{\mathrm{ad}}}    
\newcommand{\Hom}        {\operatorname{\mathsf{Hom}}}   
\newcommand{\End}        {\operatorname{\mathsf{End}}}   
\newcommand{\SP}[1]      {\left\langle{#1}\right\rangle} 
\newcommand{\ring}[1]    {\mathsf{#1}}
\newcommand{\Der}        {\operatorname{\mathsf{Der}}}
\newcommand{\Sym}        {\operatorname{\mathrm{S}}}
\newcommand{\Anti}       {\operatorname{\Lambda}}
\newcommand{\I}          {\mathrm{i}}
\newcommand{\E}          {\mathrm{e}}
\newcommand{\SDer}       {\operatorname{\mathsf{Der}_{\mathrm{sym}}}}
\newcommand{\Rothstein}  {\operatorname{\mathcal{R}}}
\newcommand{\RothBracket}[1]  {\left\{#1\right\}_{\mathrm{R}}}
\newcommand{\A}{\mathcal{A}}
\newcommand{\C}{\mathsf{C}}
\newcommand{\Eps}{\mathcal{E}}
\newcommand{\kk}{\mathbb k}
\newcommand{\MC}{\mathcal{C}}
\newcommand{\Comega}{\Omega_{\mathcal{C}}}
\newcommand{\dermod}{\mathfrak{D}}
\newcommand{\W}{\operatorname{\mathcal{W}}}
\newcommand{\degs}{\deg_s}
\newcommand{\dega}{\deg_a}
\newcommand{\degE}{\deg_\Eps}
\newcommand{\Deg}{\operatorname{\mathsf{Deg}}}
\newcommand{\DegR}{\operatorname{\mathsf{Deg}}_\mathcal{R}}
\newcommand{\parity}{\operatorname{par}}
\newcommand{\fedosovD}{\mathcal{D}}
\newcommand{\switch}[1]{\operatorname{\tau}_{#1}}
\newcommand{\calQ}{\operatorname{\mathcal{Q}}}
\newcommand{\calS}{\operatorname{\mathcal{S}}}
\newcommand{\LL}{\mathcal{L}}
\newcommand{\LplusL}{\mathcal{L\oplus L'}}
\newcommand{\CM}{\mathcal{M}_{\mathrm{CM}}}
\newcommand{\elide}[1]{\stackrel{#1}{\Hat{\ldots}}}
\newcommand{\elidetwo}[2]{\stackrel{#1}{\Hat{\ldots}}\,
  \stackrel{#2}{\Hat{\ldots}}\:\!}
\newcommand{\dif}{\operatorname{d}}
\newcommand{\shuffle}[1]{\mathcal{S}_{#1}}
\newtheorem{lemma}{Lemma}[section]
\newtheorem{proposition} [lemma] {Proposition}
\newtheorem{theorem} [lemma] {Theorem}
\newtheorem{corollary} [lemma] {Corollary}
\newtheorem{definition}[lemma] {Definition}
\newtheorem{example}[lemma]{Example}
\newtheorem{remark}[lemma]{Remark}
\newenvironment{proof}[1][{}]{\par\noindent\textsc{Proof{#1}: }}{\hspace*{\fill}$\blacksquare$\smallskip\noindent\par}
\numberwithin{equation}{section}
\begin{document}

\maketitle

\begin{abstract}
    In this paper we define Courant algebroids in a purely algebraic
    way and study their deformation theory by using two different but
    equivalent graded Poisson algebras of degree $-2$. First steps
    towards a quantization of Courant algebroids are proposed by
    employing a Fedosov like deformation quantization.
\end{abstract}

\newpage

%
% table of contents
%

\tableofcontents

%
% Introduction
%

\section{Introduction}
\label{sec:Intro}

In differential geometry, a Courant algebroid is vector bundle
together with a not necessarily positive definite non-degenerate fiber
metric, with a bracket on the sections of the bundle, and with a
bundle map into the tangent bundle of the base manifold such that
certain compatibility conditions are fulfilled. This definition is a
generalization of the canonical Courant algebroid structure on $TM
\oplus T^\ast M$ which was introduced in \cite{courant:1990a}.
Courant algebroids have increasingly gained interest in various
contexts like generalized geometries, see e.g. \cite{gualtieri:2003a},
reduction, see e.g. \cite{bursztyn.cavalcanti.gualtieri:2007a}, and
symplectic super-manifolds, see e.g.  \cite{roytenberg:2002b,
  roytenberg:1999a, roytenberg.weinstein:1998a}, to mention only a few
instances.

Since Courant algebroids can be viewed to go beyond the structure of a
Lie algebroid and since for Lie algebroids there is an entirely
algebraic notion, namely that of Lie-Rinehart algebra (or Lie-Rinehart
pair), see \cite{rinehart:1963a, huebschmann:1998a}, it seems
desirable to cast the theory of Courant algebroids also in a purely
algebraic framework. Partially, this has been achieved in the language
of super-manifolds by Roytenberg in \cite{roytenberg:2002b,
  roytenberg:1999a}. One reason for such a purely algebraic
formulation is the fact that in the reduction procedures of Courant
algebroids one typically obtains quotients which will be no longer
smooth manifolds. Depending on the precise situation, orbifold
singularities or even ``worse'' can appear. Thus, an algebraic
framework is necessary in order to speak of Courant algebroids also in
this context.

Moreover, the deformation theory of Lie algebroids as established by
Crainic and Moerdijk in \cite{crainic.moerdijk:2004a} has a completely
algebraic formulation not referring to the underlying geometry. Thus a
deformation theory for Courant algebroids along the lines of
\cite{crainic.moerdijk:2004a} should be possible. A super-manifold
approach to deformation theory can be found in \cite{roytenberg:2002b}
using the language of derived brackets, see e.g.
\cite{kosmann-schwarzbach:2004b}. Here the deformation theory is still
purely classical: Courant structures should be deformed into Courant
algebroids. However, the lessons from various formality theorems like
\cite{kontsevich:2003a} tell that the classical deformation theory is
intimately linked to the deformation theory in the sense of
quantizations. While we will provide some first steps into this
direction, the latter has not been achieved in a satisfying way
yet. Nevertheless, we believe that a good understanding of the
classical deformation theory will provide guidelines for the
quantizations.

In this work we propose a definition of a Courant algebroid in purely
algebraic terms for a commutative associative unital algebra
$\mathcal{A}$ and a finitely generated projective module $\mathcal{E}$
with a full and strongly non-degenerate inner product. Our definition
comes along with a complex controlling the deformation theory, very
much in the spirit of \cite{crainic.moerdijk:2004a}. We show that this
complex is actually a graded Poisson algebra of degree $-2$ and
elements $m$ of degree $3$ with $[m, m] = 0$ correspond to Courant
algebroid structures on the pair $(\mathcal{A}, \mathcal{E})$. Having
this ambient Poisson algebra, the usual deformation theory based on
graded Lie algebras as established in \cite{gerstenhaber:1964a,
  nijenhuis.richardson:1967a} can easily be adapted to this situation.
In addition to this rather naturally defined complex we propose a
second, seemingly different complex based on the Rothstein algebra:
Here we use the symmetric algebra over the derivations of
$\mathcal{A}$ together with the Grassmann algebra over $\mathcal{E}$
and define a graded Poisson structure of degree $-2$ by means of a
connection. In the differential geometric context this is a particular
case of the Rothstein bracket \cite{rothstein:1991a} and has been used
already in \cite{keller.waldmann:2007a, keller:2004a} to discuss the
deformation theory of Dirac structures inside Courant algebroids.
Having this purely algebraic definition of the Rothstein algebra we
can establish a Poisson monomorphism which geometrically corresponds
to a symbol calculus for the maps in the before mentioned complex.
Under certain conditions, satisfied for smooth manifolds, this
monomorphism is even an isomorphism. In the general case, we can use
it to replace the bigger complex by the Rothstein algebra: this has
the advantage that certain ``unpleasant'' symmetric multiderivations
are shown to be irrelevant for the deformation problem.

Finally, the symbol calculus gives a derived bracket picture of
Courant algebroids also in the Rothstein approach. Since the Rothstein
picture seems to be much simpler it may be a good starting point for
any sort of deformation quantization of Courant algebroids based on
the use of a connection. To illustrate this we use a Fedosov-like
construction of a deformation quantization of the Rothstein algebra as
this has already been used by Bordemann \cite{bordemann:1996a} in a
differential geometric context, see \cite{fedosov:1996a} for the
original construction of Fedosov. Under mild assumptions on the
algebra $\mathcal{A}$ we can show the existence of a suitable
deformation quantization $\star$ of the ambient Rothstein algebra. We
propose two different definitions of the Courant structure: first we
ask for a deformation of the Courant structure into an element with
square zero with respect to the star product $\star$. It is then a
standard argument that the obstructions as well as the infinitesimals
for such a quantization are elements in the same cohomology groups as
for the classical deformation theory, supporting the importance of the
complex we constructed.  However, as already seen in simple examples
our first version seems to be a quite strong requirement. Hence we
relax the definition and ask only for a deformation with square being
central. This is shown to be possible in some first examples and still
yields an inner differential, thus matching the characteristic feature
of the classical version of a Courant algebroid.  As a large class of
examples we discuss Lie-Rinehart pairs viewed as particular Courant
algebroids and study their deformation quantization. In this case,
even the stronger version of a deformation quantization is possible.

The paper is organized as follows: In
Section~\ref{AlgebraicDefinitionCourantAlgebroids} we state the
algebraic definition of a Courant algebroid and specify the category
of such Courant algebroids. In Section~\ref{ThePoissonAlgebraC} the
complex $\mathcal{C}^\bullet(\mathcal{E})$ is defined and the graded
Poisson algebra structure on $\mathcal{C}^\bullet(\mathcal{E})$ is
constructed in detail. Section~\ref{TheRothsteinAlgebraR} contains the
construction of the Rothstein algebra while
Section~\ref{sec:TheIsomorphism} establishes the relation between the
two approaches. In Section~\ref{section:Fedosov} we adapt the
well-known Fedosov construction of star products to the cases of the
Rothstein
algebra. Section~\ref{section:QuantizationOfCourantAlgebroids}
contains the definition of a deformation quantization of Courant
structures inside the ambient quantization of the Rothstein algebra
together with first examples. Finally, in
Section~\ref{section:Lie-Rinehart-Pairs} we show how a Lie-Rinehart
pair can be viewed as a Courant algebroid. We show that the
deformation quantization is possible in this case.

\noindent
\textbf{Acknowledgments:} We would like to thank Johannes Huebschmann
for valuable suggestions.

%
% Algebraic Definition of Courant Algebroids
%

\section{Algebraic Definition of Courant Algebroids}
\label{AlgebraicDefinitionCourantAlgebroids}

Let $\A$ be a commutative, unital algebra over a ring
$\ring{R}\supseteq \mathbb{Q}$ and let $\Eps$ be a projective,
finitely generated module over $\A$.  Let further $\Eps$ be endowed
with a symmetric, strongly non-degenerate, full $\A$-bilinear form
\[
\SP{\cdot,\cdot}:\Eps \times \Eps \longrightarrow \A.
\]
Recall that a bilinear form is strongly non-degenerate if and only if
the induced map $\Eps \longrightarrow \Eps'= \Hom_\A(\Eps,\A)$ is an
isomorphism.  Moreover, a bilinear form is called full if and only if
the module homomorphism
\[
\phi:\Eps \otimes_\A \Eps \longrightarrow \A
\]
induced by $\SP{\cdot,\cdot}$ is surjective, i.e. if every $a\in \A$
can be written as a finite sum $a=\sum_i \SP{x_i,y_i}$ with
$x_i,y_i\in \Eps$. Such a bilinear form is also  called a full
$\mathcal{A}$-valued inner product.
\begin{remark} 
   As a consequence of fullness, our module $\Eps$ is also
   faithful, i.e. $a z = 0$ for all $z\in \Eps$ implies $a = 0$. 
   To see this, write the unit element $1\in\A$ as a finite sum $ 1 = 
   \sum_i \SP{x_i,y_i}$ with $x_i,y_i \in \Eps$. If  now
   $a\in \A$ with  $a z = 0$ $\forall\; z\in \Eps$, then it follows that
   \begin{equation}
        a = a\cdot 1
       = a \sum_i \SP{x_i,y_i}= \sum_i\SP{a x_i,y_i} = 0.
   \end{equation}
\end{remark}
\begin{remark}
    \label{remark:Morita}
    The existence of such a strongly non-degenerate and full inner
    product is a rather strong requirement. In fact, in the framework
    of $^*$-algebras over a ring $\ring{C} = \ring{R}(\I)$ with $\I^2 =
    - 1$ where $\ring{R}$ is ordered, such a bilinear form makes
    $\mathcal{E}$ a strong Morita equivalence bimodule with respect to
    $\mathcal{A}$ and $\End_{\mathcal{A}}(\mathcal{E})$, see e.g.
    \cite{bursztyn.waldmann:2005b}.
\end{remark}
\begin{definition}[Courant Algebroid]
    \label{definition:CourantAlgebroidStructure}
    A Courant algebroid structure on $\Eps$ is a $\ring{R}$-bilinear map
    \begin{equation}
        [\cdot,\cdot]_\mathcal{C}:\Eps \otimes \Eps \longrightarrow \Eps,
    \end{equation}
    called Courant bracket, together with a map, the anchor,
    \begin{equation}
        \sigma:\Eps \longrightarrow \Der(\A)
    \end{equation}
    such that for all $x,y,z \in \Eps$ and $a\in \A$ we have:
    \begin{enumerate}
    \item $(\Eps,[\cdot,\cdot]_\mathcal{C})$ is a Leibniz algebra,
        i.e. $[\cdot,\cdot]_\mathcal{C}$ satisfies the Jacobi identity
        \begin{equation}
            \label{eq:def-courant-jacobi}
            [x,[y,z]_\mathcal{C}]_\mathcal{C} =
            [[x,y]_\mathcal{C},z]_\mathcal{C} +
            [y,[x,z]_\mathcal{C}]_\mathcal{C},
        \end{equation}
    \item
        \begin{equation}\label{eq:def-courant-derivation}
            \sigma(x)\SP{y,z} = \SP{[x,y]_\mathcal{C},z} +
            \SP{y,[x,z]_\mathcal{C}}, 
        \end{equation}
    \item
        \begin{equation}\label{eq:def-courant-antisymm}
            \sigma(x)\SP{y,z} = \SP{x,[y,z]_\mathcal{C}+[z,y]_\mathcal{C}}.
        \end{equation}
    \end{enumerate}
\end{definition}

Due to fullness we can write every $a\in \A$ as a finite sum $a =
\sum_i\SP{ x_i,y_i}$ with $x_i,y_i \in \Eps$. Using
Equation~(\ref{eq:def-courant-antisymm}) we get then for all $z \in
\Eps$ that
\begin{equation}
    \sigma(z) a =\sum_i \sigma(z)\SP{x_i,y_i} =
    \sum_i\SP{z,[x_i,y_i]_\mathcal{C} + [y_i,x_i]_\mathcal{C}}, 
\end{equation}
hence the map $\sigma$ for a given Courant algebroid structure is
uniquely determined. Moreover, it follows that $\sigma$ is an
$\A$-module homomorphism.
\begin{example}
    Consider the space of sections $\Gamma^\infty(E)$ of a smooth
    (finite dimensional) vector bundle $E\longrightarrow M$ over a
    manifold $M$ with non-zero fiber dimension.  By the
    Serre-Swan-Theorem, $\Gamma^\infty(E)$ is a projective, finitely
    generated module over $C^\infty(M)$, and every non-degenerate
    fibre metric is known to induce a strongly non-degenerate and full
    $C^\infty(M)$-valued inner product on $\Gamma^\infty(E)$.  The
    definition of a Courant algebroid structure on $\Gamma^\infty(E)$
    coincides then with the usual (non-skew-symmetric) definition of a
    Courant algebroid given in \cite{roytenberg:1999a}.
\end{example}

One can show that $[\cdot,\cdot]_{\mathcal{C}}$ satisfies a Leibniz
rule in the second argument, i.e. that
\begin{equation}
    [x,ay]_\mathcal{C} = a [x,y]_\mathcal{C} + (\sigma(x)a) y
\end{equation}
for all $x,y \in \Eps$ and $a \in \A$. The proof is the same as in the
vector bundle case, see \cite{uchino:2002a}. Using the faithfulness of
our module one can further show that $\sigma:\Eps\longrightarrow
\Der(\mathcal{A})$ is a homomorphism of Leibniz algebras, i.e. an
action of the Leibniz algebra $\Eps$ on $\A$ via derivations.

Definition~\ref{definition:CourantAlgebroidStructure} generalizes the
concept of a Courant algebroid structure on a vector bundle like
Lie-Rinehart pairs generalize Lie algebroids, see
\cite{rinehart:1963a, huebschmann:1990a}. In fact, if $\mathcal{L}
\subset \Eps$ is a coisotropic submodule which is closed under the
Courant bracket, then $(\mathcal{A},\mathcal{L})$ is a Lie-Rinehart
pair.

Since we require the bilinear form $\SP{\cdot,\cdot}$ to be
non-degenerate, a Courant bracket is skew-symmetric if and only if
$\sigma(x) = 0$ for all $x \in \Eps$. In this case $\Eps$ is a Lie
algebra over $\A$.
\begin{example}\label{example:semisimpleLie}
   Let $\mathfrak{g}$ be a finite dimensional, semi-simple Lie algebra
   over a field $\mathbb{k}$ of characteristic zero. Then the Killing form on
   $\mathfrak{g}$ is an $\ad$-invariant, non-degenerate symmetric
   bilinear form. Hence $\Eps = \mathfrak{g}$ with Killing form and trivial
   anchor is a Courant algebroid over $\A = \mathbb{k}$.
\end{example}

\begin{remark}
    \label{remark:CatCourant}%
    It is rather obvious how to define morphisms of Courant algebroids
    as structure preserving maps. Then it is easy to see that Courant
    algebroids form a category.
\end{remark}

%
% The Poisson Algebra $\mathcal{C}^\bullet$
%

\section{The Poisson Algebra $\mathcal{C}^\bullet(\mathcal{E})$}
\label{ThePoissonAlgebraC}

In this section we construct a graded Poisson algebra of degree $-2$
containing all candidates for Courant algebroids structures on the
given module $\mathcal{E}$ for a fixed choice of the inner product.
\begin{definition}\label{def:quasiCourant}
    Let $r\geq 2$. The subset $\MC^r(\Eps) \subseteq
    \Hom_\ring{R}(\bigotimes_\ring{R}^{r-1} \Eps,\Eps)$  consists of all
    elements $\C$ for which there
    exists an $\ring{R}$-multilinear map
    \begin{equation}
        \label{eq:TheSymbol}
        \sigma_{\C} \in
        \Hom_\ring{R}(\textstyle\bigotimes_\ring{R}^{r-2} \Eps,
        \Der(\A)),
    \end{equation}
    called the symbol of $\C$, fulfilling the following conditions:
    \begin{enumerate}
    \item  For all $x_1,\ldots,x_{r-2},u,w\in \Eps$ we have
        \begin{equation}\label{multi.courant.der}
            \sigma_{\C}(x_1,\ldots,x_{r-2}) \SP{u,w} = 
            \SP{\C(x_1,\ldots,x_{r-2},u),w}
            +\SP{u,\C(x_1,\ldots,x_{r-2},w)}.
        \end{equation}
      \item For $r\geq 3$ and for all $x_1,\ldots,x_{r-1},u\in \Eps$
        and $1 \leq i \leq r-2$ we have that
        \begin{eqnarray}\label{multi.courant.antisymm} \notag
            &\SP{\C(x_1,\ldots,x_i,x_{i+1},\ldots, x_{r-1})
            +\C(x_1,\ldots,x_{i+1},x_i,\ldots, x_{r-1}),u} \\
            &\qquad =\sigma_{\C}(x_1,\elidetwo{i}{i+1},x_{r-1},u)
            \SP{x_i,x_{i+1}}.
        \end{eqnarray}
    \end{enumerate}
    Furthermore, we define $\MC^{0}(\Eps) = \A$ and $\MC^1(\Eps) =
    \Eps$ and set
    \begin{equation}    
        \label{eq:DerCooleComplex}
        \MC^\bullet(\Eps) = \bigoplus_{r\geq 0} \MC^r(\Eps).
    \end{equation}
\end{definition}

It is obvious from the definition that $\MC^\bullet(\Eps)$ is a graded
$\A$-module. Moreover, an element $m \in \MC^3(\Eps)$ is a  Courant
algebroid structure on $\Eps$ if and only if $m$ satisfies the Jacobi
identity, i.e. if and only if
\begin{equation}
    \label{eq:CourantBracketCondition}
    m(x,m(y,z)) = m(m(x,y),z) + m(y,m(x,z))
\end{equation}
for all $x,y,z \in \Eps$.
\begin{remark}
    In analogy to the terminology for higher Lie brackets (see
    e.g.~\cite{filippov:1985a, vinogradov.vinogradov:1998a}) elements
    in $\MC^r(\Eps)$ could be called quasi-$r$-Courant brackets. An
    $r$-Courant bracket would then be an element $\C \in \MC^r(\Eps)$
    which satisfies
    \begin{equation}
        \C(x_1,\ldots,x_{r-2},\C(y_1,\ldots,y_{r-1})) =
        \sum_{i=1}^{r-1}
        \C(y_1,\ldots,y_{i-1},
        \C(x_1,\ldots,x_{r-2},y_i),y_{i+1},\ldots,y_{r-1})
    \end{equation}
    for all $x_1,\ldots,x_{r-2},y_1,\ldots,y_{r-1} \in \Eps$.
\end{remark}
\begin{corollary}
    The symbol $\sigma_{\C}$ of $\C \in \MC^r(\Eps)$ is uniquely
    determined by $\C$.
\end{corollary}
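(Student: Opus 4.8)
The plan is to exploit the fullness of the inner product, exactly as was done a few lines above to show that the map $\sigma$ attached to a Courant algebroid structure is uniquely determined. Fix $r \geq 2$ and $\C \in \MC^r(\Eps)$, and suppose that $\sigma_\C$ and $\sigma_\C'$, both in $\Hom_\ring{R}(\bigotimes_\ring{R}^{r-2} \Eps, \Der(\A))$, satisfy condition (\ref{multi.courant.der}) for this $\C$. The first observation is that the right-hand side of (\ref{multi.courant.der}) involves only $\C$ and the inner product, not the chosen symbol; hence for all $x_1,\ldots,x_{r-2},u,w\in\Eps$ we get
\begin{equation*}
  \sigma_\C(x_1,\ldots,x_{r-2})\SP{u,w}
  = \SP{\C(x_1,\ldots,x_{r-2},u),w} + \SP{u,\C(x_1,\ldots,x_{r-2},w)}
  = \sigma_\C'(x_1,\ldots,x_{r-2})\SP{u,w}.
\end{equation*}

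Next, for an arbitrary $a \in \A$, fullness lets us write $a = \sum_i \SP{u_i,w_i}$ as a finite sum with $u_i,w_i \in \Eps$. Since $\sigma_\C(x_1,\ldots,x_{r-2})$ and $\sigma_\C'(x_1,\ldots,x_{r-2})$ are in particular $\ring{R}$-linear maps $\A \to \A$, applying them to this decomposition and using the displayed identity term by term gives $\sigma_\C(x_1,\ldots,x_{r-2})a = \sigma_\C'(x_1,\ldots,x_{r-2})a$. As $a$ was arbitrary this means $\sigma_\C(x_1,\ldots,x_{r-2}) = \sigma_\C'(x_1,\ldots,x_{r-2})$, and as the $x_j$ were arbitrary, $\sigma_\C = \sigma_\C'$.

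There is essentially no genuine obstacle here; the only point that deserves a moment's care is that, a priori, the value of $\sigma_\C(x_1,\ldots,x_{r-2})$ on $a$ might appear to depend on the chosen decomposition of $a$ into inner products. This is not an issue, because we are not constructing a symbol but merely comparing two objects that are already assumed to exist and to satisfy (\ref{multi.courant.der}): the computation shows that each such object is \emph{forced} to take the stated value on $a$, independently of the decomposition. Note that only $\ring{R}$-linearity of the symbol enters, not the derivation property, no $\A$-linearity, and not condition (\ref{multi.courant.antisymm}).
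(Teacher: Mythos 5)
Your argument is correct and is exactly the paper's proof spelled out in detail: the paper also derives uniqueness of the symbol from fullness of $\SP{\cdot,\cdot}$ together with Equation (\ref{multi.courant.der}). The extra remarks about independence of the decomposition and which hypotheses are actually used are accurate but not needed beyond what the paper states.
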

\begin{proof}
    This is an easy consequence of the fullness of $\SP{\cdot,
      \cdot}$ and Equation~(\ref{multi.courant.der}).
\end{proof}
For $\C \in \MC^r(\Eps)$ with $r \geq 3$ and $a = \sum_i \SP{u_i,v_i}$
 condition \eqref{multi.courant.antisymm} implies that
\begin{equation}
    \begin{split}
        \sigma_{\C}(x_1,\ldots,x_{r-2}) a
        &=
        \sum_i \sigma_{\C}(x_1,\ldots,x_{r-2}) \SP{u_i,v_i}\\
        &= \sum_i \SP{\C(x_1,\ldots,u_i,v_i,\ldots,x_{r-3}) +
          \C(x_1,\ldots,v_i,u_i,\ldots,x_{r-3}),x_{r-2}}.
    \end{split}
\end{equation}
Hence the map $\sigma_{\C}$ is $\A$-linear in the last argument. Thus,
we have a well-defined map
\begin{equation}
    \label{eq:DCDef}
    \dif_{\C}  
    \in \Hom_\ring{R}
    \left(
        \textstyle\bigotimes_\ring{R}^{r-3} \Eps, 
        \Der(\A) \otimes_\A \Eps
    \right)
\end{equation}
given by
\begin{equation}
    \SP{\dif_{\C}(x_1,\ldots,x_{r-3})a,y} =
    \sigma_{\C}(x_1,\ldots,x_{r-3},y) a.
\end{equation} 
Note that we use here the isomorphism $\Der(\A,\Eps)\cong
\Der(\A)\otimes_\A \Eps$, which holds since $\Eps$ is supposed to be
projective and finitely generated.
\begin{lemma}
    \label{lemma.courantmaps-properties}
    Let $\C \in \MC^{r}(\Eps)$ with $r \geq 2$. Then
    \begin{equation}
        \label{derivation-rule}
        \C(x_1,\ldots,a x_{r-1}) = a \C(x_1,\ldots,x_{r-1})+
        \sigma_{\C}(x_1,\ldots,x_{r-2})a\; x_{r-1},
    \end{equation}   
    for all $x_1,\ldots,x_{r-1} \in \Eps$ and $a\in \A$.
\end{lemma}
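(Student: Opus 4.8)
The goal is to prove the Leibniz rule \eqref{derivation-rule} for $\C \in \MC^r(\Eps)$ in its last argument. The plan is to exploit that $\SP{\cdot,\cdot}$ is nondegenerate, so it suffices to test both sides of \eqref{derivation-rule} against an arbitrary $w \in \Eps$ and check equality of the resulting elements of $\A$. Concretely, I would compute $\SP{\C(x_1,\ldots,x_{r-2},a x_{r-1}),w}$ in two different ways using the defining conditions \eqref{multi.courant.der} and \eqref{multi.courant.antisymm} of the symbol.

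First I would apply \eqref{multi.courant.der} with $u = a x_{r-1}$, giving
\begin{equation}
    \SP{\C(x_1,\ldots,x_{r-2},a x_{r-1}),w}
    = \sigma_{\C}(x_1,\ldots,x_{r-2})\SP{a x_{r-1},w}
      - \SP{a x_{r-1},\C(x_1,\ldots,x_{r-2},w)}.
\end{equation}
Now I expand the right-hand side using $\A$-bilinearity of $\SP{\cdot,\cdot}$, the Leibniz rule for the derivation $\sigma_{\C}(x_1,\ldots,x_{r-2}) \in \Der(\A)$ on the product $a\SP{x_{r-1},w}$, and once more \eqref{multi.courant.der} (this time reading it backwards) to recombine $\sigma_{\C}(x_1,\ldots,x_{r-2})\SP{x_{r-1},w} - \SP{x_{r-1},\C(x_1,\ldots,x_{r-2},w)}$ into $\SP{\C(x_1,\ldots,x_{r-2},x_{r-1}),w}$. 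Collecting terms yields
\begin{equation}
    \SP{\C(x_1,\ldots,x_{r-2},a x_{r-1}),w}
    = a\SP{\C(x_1,\ldots,x_{r-1}),w}
      + (\sigma_{\C}(x_1,\ldots,x_{r-2})a)\SP{x_{r-1},w},
\end{equation}
which is exactly $\SP{a\C(x_1,\ldots,x_{r-1}) + (\sigma_{\C}(x_1,\ldots,x_{r-2})a)\,x_{r-1},\,w}$. Since $w$ is arbitrary and $\SP{\cdot,\cdot}$ is nondegenerate, \eqref{derivation-rule} follows. For the case $r = 2$ the symbol has no $x$-arguments and the same computation goes through verbatim, reading $\sigma_{\C} \in \Hom_\ring{R}(\Eps,\Der(\A))$ with the empty tuple of spectator arguments; the antisymmetry condition \eqref{multi.courant.antisymm} is not needed at all here, only \eqref{multi.courant.der}.

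The only subtlety — and the step to be careful about rather than a genuine obstacle — is that the argument as written establishes the Leibniz rule in the \emph{last} slot $x_{r-1}$, which is the slot appearing inside $\SP{\cdot,\cdot}$ on the left of \eqref{multi.courant.der}; this matches the statement of the lemma, so no permutation of arguments is needed. One should also note that $\sigma_{\C}(x_1,\ldots,x_{r-2})$ acts as a genuine derivation of $\A$ by definition \eqref{eq:TheSymbol}, which is what licenses the Leibniz expansion of $\sigma_{\C}(x_1,\ldots,x_{r-2})(a\SP{x_{r-1},w})$; everything else is bookkeeping with $\A$-bilinearity and $\ring{R}$-linearity.
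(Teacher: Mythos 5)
Your proof is correct and is essentially the argument the paper has in mind: the paper simply defers to Uchino's computation, which is exactly this pairing of $\C(x_1,\ldots,ax_{r-1})$ against an arbitrary $w$, expanding $\sigma_{\C}(x_1,\ldots,x_{r-2})(a\SP{x_{r-1},w})$ by the Leibniz rule and recombining via \eqref{multi.courant.der}, then invoking strong nondegeneracy. Your observations that only condition \eqref{multi.courant.der} is needed and that the $r=2$ case goes through with an empty tuple of spectator arguments are both accurate.
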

\begin{proof}
    The proof can be done analogously to \cite{uchino:2002a}.
\end{proof}

Sometimes it is more convenient not to work with elements $\C \in
\MC^r(\Eps)$ but with $\ring{R}$-multilinear forms $\omega \in
\Hom_\ring{R}(\bigotimes_{\ring{R}}^r\Eps,\A)$ defined by
$\omega(x_1,\ldots,x_r) = \SP{\C(x_1,\ldots,x_{r-1}),x_r}$. This
motivates the following definition:
\begin{definition}
    For $r\geq 1$ the subspace $\Omega_\MC^r(\Eps) \subseteq
    \Hom_\ring{R}(\bigotimes_{\ring{R}}^r\Eps,\A) $
    consists of all elements $\omega$ fulfilling the following
    conditions:
    \begin{enumerate}
    \item $\omega$ is an $\A$-module homomorphism in the last entry, i.e.
        \begin{equation}
            \omega(x_1,\ldots,x_{r-1},a x_r) = a
            \omega(x_1,\ldots,x_{r-1},x_r)
        \end{equation}
        for all $a \in \A$.
      \item For $r \geq 2$ there exists an $\ring{R}$-multilinear map
        $\sigma_{\omega} \in
        \Hom_\ring{R}(\textstyle\bigotimes_\ring{R}^{r-2} \Eps,
        \Der(\A))$ such that
        \begin{equation}
            \begin{split}
                \omega(x_1,&\ldots,x_i,x_{i+1},\ldots, x_r)
                +\omega(x_1,\ldots,x_{i+1},x_i,\ldots, x_r) \\
                &\qquad =\sigma_{\omega}(x_1,\elidetwo{i}{i+1},x_r)
                \SP{x_i,x_{i+1}}
            \end{split}
        \end{equation}
        for all $1\leq i \leq r-1$.
    \end{enumerate}
    Furthermore, we set $\Comega^0(\Eps) = \A$ and
    $\Comega^\bullet(\Eps) = \bigoplus_{r = 0}^\infty
    \Comega^r(\Eps)$.
\end{definition}
The next lemma is clear.
\begin{lemma}
    \label{lemma.iso-maps-forms}
    There is an isomorphism of graded $\mathcal{A}$-modules
    \begin{equation}
        \label{eq:OmegaDef}
        \MC^\bullet(\Eps) \longrightarrow
        \Omega_\MC^\bullet(\Eps)
    \end{equation}
    given by
    \begin{equation}
        \label{eq:omegaCDef}
        \omega_C(x_1,\ldots,x_r) = \SP{\C(x_1,\ldots,x_{r-1}),x_r}
    \end{equation}
    for $r \ge 1$ and by the identity on $\mathcal{A} = \C^0(\Eps) =
    \Omega_\MC^0(\Eps)$.
\end{lemma}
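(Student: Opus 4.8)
The plan is to verify directly that contraction with the inner product in the last slot, $\C \mapsto \omega_\C$, is a degree-preserving, $\A$-linear bijection. In degree $0$ it is by definition the identity of $\A$. In degree $1$ it is the canonical map $\Eps \longrightarrow \Eps' = \Hom_\A(\Eps,\A)$, $x \mapsto \SP{x,\cdot}$: indeed $\Comega^1(\Eps)$ is precisely $\Hom_\A(\Eps,\A)$ (the second axiom for $\Comega^r$ is vacuous for $r=1$, the first just says ``$\A$-linear''), and this map is bijective exactly because $\SP{\cdot,\cdot}$ is strongly nondegenerate. So all the content is in degrees $r\geq 2$, and essentially the only tool needed there is strong nondegeneracy again.

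First I would show the map is well defined: for $\C \in \MC^r(\Eps)$ the form $\omega_\C$ lies in $\Comega^r(\Eps)$, with $\sigma_{\omega_\C} := \sigma_\C$. Linearity in the last entry over $\A$ is immediate since $\C(x_1,\ldots,x_{r-1})$ does not involve $x_r$ and $\SP{\cdot,\cdot}$ is $\A$-linear in each slot. For the transposition condition one splits on the index $i$. If $i=r-1$, the required identity for $\omega_\C$ reads
\[
\SP{\C(x_1,\ldots,x_{r-1}),x_r} + \SP{\C(x_1,\ldots,x_{r-2},x_r),x_{r-1}} = \sigma_\C(x_1,\ldots,x_{r-2})\SP{x_{r-1},x_r},
\]
which is exactly \eqref{multi.courant.der} (with $u=x_{r-1}$, $w=x_r$) after using the symmetry of $\SP{\cdot,\cdot}$. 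If $1\leq i\leq r-2$ — possible only for $r\geq 3$ — then \eqref{multi.courant.antisymm}, already an identity in $\A$ with a free slot $u$, gives the required identity upon substituting $u=x_r$.

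Next, injectivity is immediate from nondegeneracy: $\omega_\C = 0$ forces $\SP{\C(x_1,\ldots,x_{r-1}),x_r}=0$ for all $x_r$, hence $\C=0$. For surjectivity, take $\omega \in \Comega^r(\Eps)$. By its $\A$-linearity in the last entry the assignment $x_r \mapsto \omega(x_1,\ldots,x_{r-1},x_r)$ is an element of $\Eps'$, so strong nondegeneracy provides a unique $\C(x_1,\ldots,x_{r-1}) \in \Eps$ with $\SP{\C(x_1,\ldots,x_{r-1}),x_r} = \omega(x_1,\ldots,x_r)$; uniqueness together with $\ring{R}$-multilinearity of $\omega$ and $\A$-bilinearity of $\SP{\cdot,\cdot}$ forces $\C$ to be $\ring{R}$-multilinear. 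Setting $\sigma_\C := \sigma_\omega$ and reading the transposition conditions of $\omega$ ``backwards'' as above — the case $i=r-1$ yielding \eqref{multi.courant.der}, the cases $i\leq r-2$ yielding \eqref{multi.courant.antisymm} — shows $\C \in \MC^r(\Eps)$, and by construction $\omega_\C = \omega$.

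Finally, additivity and degree preservation of $\C\mapsto\omega_\C$ are obvious, and $\omega_{a\C} = a\,\omega_\C$ follows from $\A$-linearity of $\SP{\cdot,\cdot}$; since $\Der(\A)$ carries its pointwise $\A$-module structure ($aD$ is again a derivation because $\A$ is commutative), the symbols transform accordingly and the two $\A$-module structures agree under the bijection. I do not expect a genuine obstacle — the lemma is stated as ``clear'' — the only point needing care is matching the index ranges correctly (the derivation axiom \eqref{multi.courant.der} corresponds to swapping the last two arguments, the antisymmetry axiom \eqref{multi.courant.antisymm} to swaps among the first $r-1$) and inspecting the low-degree cases $r=0,1,2$ separately.
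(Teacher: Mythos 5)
Your verification is correct and is exactly the routine argument the paper has in mind when it dismisses the lemma as ``clear'': strong non-degeneracy gives the bijection in each degree, and the two axioms of $\Omega_\MC^r(\Eps)$ match \eqref{multi.courant.der} (transposing the last two slots) and \eqref{multi.courant.antisymm} (transposing slots among the first $r-1$, with the free argument $u$ specialized to $x_r$). The paper offers no written proof, so there is nothing to compare beyond noting that your index bookkeeping and low-degree cases are all handled correctly.
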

\begin{definition}
    For an $\A$-module $\mathcal{F}$ we define
    \begin{equation}
        \SDer^p(\A,\mathcal{F}) =
        \{D\in\Hom_{\ring{R}}(\textstyle\bigotimes_{\ring{R}}^p
        \A,\mathcal{F})~|~ D 
        \text{ is symmetric and a derivation in each entry}\}
    \end{equation}
    and set $\SDer^p(\A) = \SDer^p(\A,\A)$.
\end{definition}

\begin{proposition}
    Let $\omega \in \Comega^r(\Eps)$ with $r\geq 2$. Then there exist
    unique maps
    \begin{equation}
        \pi^{(p)}_\omega \in \SDer^p(\A,\Comega^{r-2p}(\Eps))
    \end{equation}
    for $0 \leq 2p \leq r$ such that $\pi^{(0)}_\omega = \omega$ and
    such that $\pi^{(p+1)}_\omega (a_1,\ldots,a_p,\cdot)$ is the
    symbol of $\pi^{(p)}_\omega (a_1,\ldots,a_p)$ for all
    $a_1,\ldots,a_p \in \A$.
\end{proposition}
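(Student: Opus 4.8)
Here is the proof strategy I would follow.

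The plan is to construct the maps $\pi^{(p)}_\omega$ recursively and then verify their properties by induction on $p$. One is forced to take $\pi^{(0)}_\omega=\omega$, and, once $\pi^{(p)}_\omega$ is known with $2p\le r$, one is forced (whenever $2(p+1)\le r$) to define $\pi^{(p+1)}_\omega(a_1,\ldots,a_p,\cdot)$ as the symbol of $\pi^{(p)}_\omega(a_1,\ldots,a_p)$; here one reads ``the symbol'', which a priori lies in $\Hom_\ring{R}(\bigotimes_{\ring{R}}^{k}\Eps,\Der(\A))$ with $k=r-2p-2$, as the curried map $a_{p+1}\mapsto\pi^{(p+1)}_\omega(a_1,\ldots,a_p,a_{p+1})$ with values in $\Hom_\ring{R}(\bigotimes_{\ring{R}}^{k}\Eps,\A)$. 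Since the symbol of an element of $\Comega^\bullet(\Eps)$ is uniquely determined by fullness of $\SP{\cdot,\cdot}$ — substitute $a=\sum_i\SP{u_i,v_i}$ into the defining identity, just as for $\MC^\bullet(\Eps)$ — uniqueness follows at once. The content is therefore \emph{existence}, i.e. that this recursion never leaves $\SDer^p(\A,\Comega^{r-2p}(\Eps))$.

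Everything hinges on the following statement about a single element. If $\xi\in\Comega^s(\Eps)$, $s\ge 2$, then $\eta_\xi(a)(x_1,\ldots,x_{s-2}):=\bigl(\sigma_\xi(x_1,\ldots,x_{s-2})\bigr)(a)$ defines $\eta_\xi\in\SDer^1(\A,\Comega^{s-2}(\Eps))$, and $\eta_\xi(\cdot)$ is the symbol of $\xi$. Granting this, set $\pi^{(p+1)}_\omega(a_1,\ldots,a_{p+1}):=\eta_{\pi^{(p)}_\omega(a_1,\ldots,a_p)}(a_{p+1})$; the symbol relation is then built in, and it only remains to check that $\pi^{(p)}_\omega$ is symmetric and a derivation in each of its $p$ entries. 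To transfer these from $\pi^{(p)}_\omega$ to $\pi^{(p+1)}_\omega$, note that the symbol is $\ring{R}$-linear and $\A$-homogeneous in its underlying form, $\sigma_{a\xi_1+b\xi_2}=a\,\sigma_{\xi_1}+b\,\sigma_{\xi_2}$, which follows from uniqueness of symbols and the fact that $\Der(\A)$ is an $\A$-module (if $D$ is a derivation so is $aD$). This passes the derivation property and the symmetry of $\pi^{(p)}_\omega$ in its first $p$ slots to $\pi^{(p+1)}_\omega$; the derivation property in the last slot of $\pi^{(p+1)}_\omega$ is the Leibniz rule for $\sigma_\xi(x_1,\ldots,x_{s-2})\in\Der(\A)$, which is part of the statement about $\eta_\xi$; and symmetry of $\pi^{(p+1)}_\omega$ in its last two slots — which, together with the $S_p$-symmetry already present in the first $p$ slots, yields the full $S_{p+1}$-symmetry — will fall out of the polarization identity below (by multilinearity and fullness one may test it with the two arguments equal to $\SP{u,w}$ and $\SP{u',w'}$).

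So it remains to prove the statement about $\eta_\xi$. That $\eta_\xi$ is $\ring{R}$-linear and a derivation into $\Comega^{s-2}(\Eps)$ is the Leibniz rule for $\sigma_\xi(x_1,\ldots,x_{s-2})$ applied entry by entry. For $s\in\{2,3\}$ the target is $\A$ or $\Hom_\A(\Eps,\A)$, and membership is clear once one knows $\sigma_\xi$ is $\A$-linear in its last slot, which is proved just as for $\MC^\bullet(\Eps)$ before~\eqref{eq:DCDef}, by substituting $ax_s$ into the symbol identity of $\xi$ and using fullness. The real work is $s\ge 4$, where one must exhibit a symbol for $\eta_\xi(a)$. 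By $\ring{R}$-linearity and fullness it suffices to treat $a=\SP{u,w}$, and the instance of the symbol identity for $\xi$ transposing its last two arguments gives
\[
\eta_\xi(\SP{u,w})(x_1,\ldots,x_{s-2})=\xi(x_1,\ldots,x_{s-2},u,w)+\xi(x_1,\ldots,x_{s-2},w,u).
\]
Transposing $x_i,x_{i+1}$ for $1\le i\le s-3$ and invoking the symbol identity of $\xi$ twice more yields
\begin{align*}
&\eta_\xi(\SP{u,w})(x_1,\ldots,x_i,x_{i+1},\ldots,x_{s-2})+\eta_\xi(\SP{u,w})(x_1,\ldots,x_{i+1},x_i,\ldots,x_{s-2})\\
&\qquad=\bigl(\sigma_\xi(x_1,\elidetwo{i}{i+1},x_{s-2},u,w)+\sigma_\xi(x_1,\elidetwo{i}{i+1},x_{s-2},w,u)\bigr)\SP{x_i,x_{i+1}},
\end{align*}
so the candidate symbol of $\eta_\xi(\SP{u,w})$ is $(z_1,\ldots,z_{s-4})\mapsto\sigma_\xi(z_1,\ldots,z_{s-4},u,w)+\sigma_\xi(z_1,\ldots,z_{s-4},w,u)$.

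\textbf{The main obstacle} is to show that this depends only on $\SP{u,w}$ and not on $u,w$ separately. The tool is the polarization identity
\begin{align*}
&\bigl(\sigma_\xi(z_1,\ldots,z_{s-4},u,w)+\sigma_\xi(z_1,\ldots,z_{s-4},w,u)\bigr)\SP{u',w'}\\
&\qquad=\bigl(\sigma_\xi(z_1,\ldots,z_{s-4},u',w')+\sigma_\xi(z_1,\ldots,z_{s-4},w',u')\bigr)\SP{u,w},
\end{align*}
which I would obtain by expanding both sides through the ``transpose the last two'' identity into four terms $\xi(z_1,\ldots,z_{s-4},\ast,\ast,\ast,\ast)$ and regrouping via the symbol identity for the two slots carrying $u,w$. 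Since a derivation annihilates $0$, the right-hand side makes it plain that $\sum_k\bigl(\sigma_\xi(\ldots,u_k,w_k)+\sigma_\xi(\ldots,w_k,u_k)\bigr)$, evaluated on any $\SP{u',w'}$ and hence — by fullness — on any element of $\A$, vanishes whenever $\sum_k\SP{u_k,w_k}=0$. Thus the candidate symbol is well defined, $\eta_\xi(a)\in\Comega^{s-2}(\Eps)$, which completes the statement about $\eta_\xi$ and, with the induction, the proof. The same polarization identity, applied with $\xi=\pi^{(p-1)}_\omega(a_1,\ldots,a_{p-1})$, is what delivers the symmetry of $\pi^{(p+1)}_\omega$ in its last two arguments used in the second paragraph.
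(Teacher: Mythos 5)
Your argument is correct and is essentially the paper's proof: the same induction on $p$, the same recursive definition of $\pi^{(p+1)}_\omega$ as the symbol of $\pi^{(p)}_\omega$, reduction to $a=\SP{u,w}$ by fullness, and the same four-term transposition computation (your polarization identity) to obtain the symmetry in the last two arguments. The only comment is that your ``main obstacle'' is not really one: since the symbol of an element of $\Comega^\bullet(\Eps)$ is unique (by fullness) and additive, exhibiting \emph{a} symbol of $\eta_\xi(\SP{u,w})$ for each fixed pair $u,w$ already yields, summand by summand, \emph{the} symbol of $\eta_\xi(a)$ for any $a=\sum_k\SP{u_k,w_k}$, so independence of the representation comes for free and the polarization identity is genuinely needed only for the symmetry of $\pi^{(p+1)}_\omega$ in its last two slots.
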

\begin{proof}
    We prove the proposition by induction over $p$. By definition
    $\pi^{(1)}_\omega = \sigma_\omega$, hence $\pi^{(1)}_\omega$ is a
    derivation. Denote by $i_x \omega$ the map which is obtained by
    inserting $x \in \Eps$ in the first argument of $\omega$. Then
    clearly $i_x \omega \in \Comega^{r-1}(\Eps)$. Since for $u,v \in
    \Eps$ one has    
    \[
        \pi^{(1)}_\omega (\SP{u,v}) = (i_u i_v + i_v
        i_u) \omega,
    \]
    it follows that $\pi^{(1)}_\omega \in \Der(\A,\Comega^{r-2}(\Eps))$ by
    fullness of the scalar product.  Suppose now that we have already found
    $\pi^{(0)}_\omega,\ldots,\pi^{(p)}_\omega$. If $2p \geq r-1$ there
    is nothing more to show. Otherwise define
    $\pi^{(p+1)}_\omega(a_1,\ldots,a_p,\cdot)$ as the symbol of
    $\pi^{(p)}_\omega (a_1,\ldots,a_p) \in \Comega^{r-2p}(\Eps)$ for
    all $a_1,\ldots,a_p \in \A$. Then
    \begin{align*}
        &\pi_\omega^{(p+1)}(a_1,\ldots,a_{p},\SP{u,v})
        (x_1,\ldots,x_{r-2p-2}) \\ 
        &= \pi_\omega^{(p)}(a_1,\ldots,a_p)
        (x_1,\ldots,u,v,\ldots,x_{r-2p-2})
        + \pi_\omega^{(p)}(a_1,\ldots,a_{p})
        (x_1,\ldots,v,u,\ldots,x_{r-2p-2}),
    \end{align*}
    and by fullness we conclude that $\pi^{(p+1)}_\omega \in
    \Hom(\bigotimes_\ring{R}^{p+1} \A,\Comega^{r-2(p+1)}(\Eps))$ and that
    $\pi^{(p+1)}_\omega$ is derivative and symmetric in the first $p$
    arguments. To show the symmetry in the last two arguments let
    $u,v,w,z \in \Eps$. Then
    \begin{align*}
        \pi_\omega^{(p+1)}(a_1,\ldots,&a_{p-1},\SP{u,v},\SP{w,z})
        (x_1,\ldots,x_{r-2p-2})\\ &=
        \pi_\omega^{(p)}(a_1,\ldots,a_{p-1},\SP{u,v})
        (x_1,\ldots,w,z,\ldots,x_{r-2p-2})\\
        &\quad + \pi_\omega^{(p)}(a_1,\ldots,a_{p-1},\SP{u,v})
        (x_1,\ldots,z,w,\ldots,x_{r-2p-2})\\[1mm]
        & =
        \pi_\omega^{(p-1)}(a_1,\ldots,a_{p-1})
        (x_1,\ldots,u,v,\ldots,w,z,\ldots,x_{r-2p-2})\\  
        &\quad+\pi_\omega^{(p-1)}(a_1,\ldots,a_{p-1})
        (x_1\ldots,u,v\ldots,z,w,\ldots,x_{r-2p-2})\\
        &\quad+ \pi_\omega^{(p-1)}(a_1,\ldots,a_{p-1})
        (x_1\ldots,v,u,\ldots,w,z,\ldots,x_{r-2p-2}) \\
        &\quad+
        \pi_\omega^{(p-1)}(a_1,\ldots,a_{p-1})
        (x_1\ldots,v,u,\ldots,z,w,\ldots,x_{r-2p-2})  
        \\[1mm]
        &= \pi_\omega^{(p)}(a_1,\ldots,a_{p-1},\SP{w,z})
        (x_1,\ldots,u,v,\ldots,x_{r-2p-2})\\
        &\quad + \pi_\omega^{(p)}(a_1,\ldots,a_{p-1},\SP{w,z})
        (x_1,\ldots,v,u,\ldots,x_{r-2p-2})\\[1mm]
        &=\pi_\omega^{(p+1)}(a_1,\ldots,a_{p-1},\SP{w,z},\SP{u,v})
        (x_1,\ldots,x_{n-2p-2}),
    \end{align*}
    and fullness implies now that $\pi^{(p+1)}_\omega \in
    \SDer^{p+1}(\A,\Comega^{r-2(p+1)}(\Eps))$.
\end{proof}
\begin{definition}
    Let $\C \in \MC^r(\Eps)$ with $r \geq 3$. Then define  maps
    \begin{equation}
        \delta_{\C}^{(p)}  \in \SDer^p(\A,\MC^{r-2p}(\Eps))
    \end{equation}
    for $ 0 \leq 2p < r$ by
    \begin{equation}
        \SP{\delta_{\C}^{(p)}(a_1,\ldots,a_p)(x_1,\ldots,x_{r-2p-1}),x_{r-2p}}
        = \pi_{\omega_{\C}}^{(p)}(a_1,\ldots,a_p)(x_1,\ldots,x_{r-2p-1},x_{r-2p}),
    \end{equation}
    where $a_1,\ldots,a_p \in \A$ and $x_1,\ldots,x_{r-2p} \in \Eps$,
    and $\omega_{\C} \in \Comega^{r}(\Eps)$ is given by
    \eqref{eq:omegaCDef}.
\end{definition}
Note that we have $\delta^{(0)}_{\C} = \C$ and $\delta^{(1)}_{\C} =
\dif_{\C}$.
\begin{example}
   Consider $\mathcal{A}$ as $\mathcal{A}$-module in the usual way and
   use the canonical inner product $\SP{a, b} = ab$.  Then it is an
   easy exercise to show that for any $P \in \SDer^2(\mathcal{A})$ the
   map
   \begin{equation}
      \label{eq:BadC}
      \C(x, y, z) = P(x, z) y
   \end{equation}
   satisfies all requirements from Definition~\ref{def:quasiCourant},
   and hence provides an element $\mathsf{C} \in \MC^4(\mathcal{A})$.
   Explicitly, we have $\pi_{\C}^{(1)}(a)(x,y) = \sigma_{\C} (x, y) a =
   P(x, a)y$ and $\delta_{\C}^{(1)}(a)(x) = \dif_{\C}(x) a = P(x, a)$, and
   finally $\pi_\C^{(2)}(a,b) = P(a,b)$.
\end{example}
Note that in the previous example $\pi_\C^{(2)}$ is in general an
element of $\SDer^2(\A)$, and not of the smaller space
$\Sym_\A^2\Der(\A)$ which we will introduce in
Definition~\ref{def:symmetric_product_of_Der}. 
In Section~\ref{sec:TheIsomorphism}, such symmetric biderivations will
show unpleasant features unless they factorize into symmetric 
products of derivations. The main purpose of the construction of the
Rothstein algebra in Section~\ref{TheRothsteinAlgebraR} will be to
show that they do not contribute to the deformation problem.

For $\C \in \MC^r(\mathcal{E})$ with $r\geq 2$ we denote
by $i_x \C$ the map with one argument less which is obtained by
inserting $x$ in the \emph{first} argument of $\C$.  It follows then
immediately from the definition of $\MC^r(\Eps)$ that $i_x \C \in
\MC^{r-1}(\Eps)$.
\begin{lemma}
    \label{lemma:ixC}
    Let $\C \in \MC^r(\Eps)$, $x \in \mathcal{E}$, and $a\in \A$. Then
    for $r = 3$ we have
    \begin{equation}
        \sigma_{i_x \C}a = i_x\sigma_{\C}a = \SP{\dif_{\C}a,x},
    \end{equation}
    while for $r \geq 4$ we get
    \begin{equation}
        \sigma_{i_x \C}a = i_x\sigma_{\C}a
        \quad
        \textrm{ as well as }
        \quad
        i_x \dif_{\C}a =  \dif_{i_x \C}a.
    \end{equation}
\end{lemma}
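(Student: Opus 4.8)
The plan is to unravel all the definitions and reduce each claimed identity to the defining property \eqref{multi.courant.der} of the symbol, using the fullness of $\SP{\cdot,\cdot}$ to pass from pairing identities to the identities of the maps themselves. Recall that $i_x\C$ is obtained by inserting $x$ in the first slot, so if $\C$ has arity $r-1$, then $i_x\C$ has arity $r-2$ and lies in $\MC^{r-1}(\Eps)$ by the remark preceding the lemma. Its symbol $\sigma_{i_x\C}$ should then be $i_x\sigma_{\C}$, and its associated map $\dif_{i_x\C}$ should be $i_x\dif_{\C}$; the whole content of the lemma is that the ``obvious'' candidates really are the symbol and the $\dif$-map of $i_x\C$, which follows once we check the characterizing equations.

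First I would treat the symbol. For $r\geq 3$, fix $x\in\Eps$ and consider $i_x\sigma_{\C}\colon \bigotimes^{r-3}\Eps\to\Der(\A)$, i.e.\ $(i_x\sigma_{\C})(x_2,\ldots,x_{r-2}) = \sigma_{\C}(x,x_2,\ldots,x_{r-2})$. Plugging $x_1 = x$ into \eqref{multi.courant.der} for $\C$ gives, for all $u,w\in\Eps$,
\begin{equation*}
    (i_x\sigma_{\C})(x_2,\ldots,x_{r-2})\SP{u,w}
    = \SP{(i_x\C)(x_2,\ldots,x_{r-2},u),w} + \SP{u,(i_x\C)(x_2,\ldots,x_{r-2},w)},
\end{equation*}
which is exactly \eqref{multi.courant.der} for $i_x\C$. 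By the uniqueness Corollary (uniqueness of the symbol, which rests on fullness) this forces $\sigma_{i_x\C} = i_x\sigma_{\C}$. In the case $r=3$ the map $i_x\C$ has arity one and its symbol has arity zero, i.e.\ $\sigma_{i_x\C}\in\Der(\A)$; the same substitution $x_1=x$ into \eqref{multi.courant.der} yields $\sigma_{i_x\C}a = i_x\sigma_{\C}a$, and the identity $i_x\sigma_{\C}a = \SP{\dif_{\C}a,x}$ is just the defining equation \eqref{eq:DCDef} of $\dif_{\C}$ read with $y=x$ (valid since for $r=3$ the map $\dif_{\C}$ has zero arity, an element of $\Der(\A)\otimes_\A\Eps$).

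For $r\geq 4$ it remains to identify $\dif_{i_x\C}$ with $i_x\dif_{\C}$. Here $\dif_{i_x\C}$ is the map associated to $i_x\C\in\MC^{r-1}(\Eps)$ via the defining relation
\begin{equation*}
    \SP{\dif_{i_x\C}(x_2,\ldots,x_{r-3})a,\,y} = \sigma_{i_x\C}(x_2,\ldots,x_{r-3},y)\,a.
\end{equation*}
Using the already-established $\sigma_{i_x\C} = i_x\sigma_{\C}$, the right-hand side equals $\sigma_{\C}(x,x_2,\ldots,x_{r-3},y)a$, which by the defining relation for $\dif_{\C}$ is $\SP{\dif_{\C}(x,x_2,\ldots,x_{r-3})a,\,y} = \SP{(i_x\dif_{\C})(x_2,\ldots,x_{r-3})a,\,y}$. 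Since this holds for all $y\in\Eps$, strong nondegeneracy of $\SP{\cdot,\cdot}$ gives $\dif_{i_x\C}a = i_x\dif_{\C}a$, as claimed. There is no real obstacle here: the lemma is a bookkeeping statement, and the only points needing a little care are making sure $i_x\C$ has the right arity so that the formulas for $\sigma$ and $\dif$ of the preceding degree apply, and invoking fullness (for the symbol) and strong nondegeneracy (for the $\dif$-map) at the appropriate places to turn pairing identities into identities of maps.
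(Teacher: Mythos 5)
Your proof is correct and follows exactly the route the paper intends: the paper's own proof is just the one-line remark that the lemma ``follows directly from the definition of $\MC^\bullet(\Eps)$ by some easy calculations,'' and your write-up supplies precisely those calculations (substituting $x$ into \eqref{multi.courant.der}, invoking uniqueness of the symbol via fullness, and then unwinding the defining relation for $\dif_{\C}$ together with strong non-degeneracy). Nothing to add.
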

\begin{proof}
  This follows directly from the definition of $\MC^\bullet(\Eps)$ by
  some easy calculations.
\end{proof}
\begin{proposition}
    \label{proposition:CRN-Bracket}
    \begin{enumerate}
    \item There exists a unique $\ring{R}$-bilinear, graded
        skew-symmetric map
        \begin{equation}
            \MC^r(\Eps)\times \MC^s(\Eps) \longrightarrow \MC^{r+s-2}(\Eps),
        \end{equation}
        uniquely defined by 
        \begin{align}
            [a,b] & = 0,\label{eq:def.CRN-Bra.00} \\
            [a,x] & = 0\label{eq:def.CRN-Bra.01} = [x,a],\\
            [x,y] & = \SP{x,y},\label{eq:def.CRN-Bra.11}\\
            [\mathsf{D},a] & = \sigma_{\mathsf{D}} a =
            -[a,\mathsf{D}], \label{eq:def.CRN-Bra.20}\\ 
            [\C,x] & = i_x \C = (-1)^{r+1} [x,\C], \label{eq:def.CRN-Bra.r1}
        \end{align}
        for elements $a,b \in \A$, $x,y \in \Eps$, $\mathsf{D}\in \MC^2(\Eps)$
        and $\C \in \MC^r(\Eps)$ for $r\geq 2$, and by the recursion
        \begin{equation}\label{eq:CRN_recursion}
            i_x[\C_1,\C_2] = [[\C_1,\C_2],x] = (-1)^s [[\C_1,x],\C_2] +
            [\C_1,[\C_2,x]] 
        \end{equation}
        for $\C_1 \in \MC^r(\Eps)$, $\C_2 \in \MC^s(\Eps)$ and
        $x\in \Eps$.
    \item The bracket $[\cdot,\cdot]$ satisfies the graded Jacobi
        identity, i.e. for all $\C_1 \in \MC^r(\Eps)$, $\C_2 \in \MC^s(\Eps)$
        and $\C_3 \in \MC^t(\Eps)$ we have
        \begin{equation}
            [\C_1,[\C_2,\C_3]] = [[\C_1,\C_2],\C_3] + (-1)^{rs}
            [\C_2,[\C_1,\C_3]]. 
        \end{equation}
    \end{enumerate}
\end{proposition}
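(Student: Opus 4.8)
The plan is to construct the bracket by induction on the total degree $r+s$ and then prove the graded Jacobi identity by a second induction, using the defining recursion \eqref{eq:CRN_recursion} together with the already-established low-degree cases \eqref{eq:def.CRN-Bra.00}--\eqref{eq:def.CRN-Bra.r1} as the base. First I would check that the low-degree prescriptions are mutually consistent: for instance that \eqref{eq:def.CRN-Bra.r1} specialized to $\C=\mathsf{D}\in\MC^2(\Eps)$ is compatible with \eqref{eq:def.CRN-Bra.20} via \eqref{multi.courant.der}, and that $[\mathsf{D}_1,\mathsf{D}_2]\in\MC^2(\Eps)$ still has a symbol (here one uses Lemma~\ref{lemma:ixC} and the identity $[[\mathsf{D}_1,\mathsf{D}_2],x]=-[[\mathsf{D}_1,x],\mathsf{D}_2]+[\mathsf{D}_1,[\mathsf{D}_2,x]]$ to define $\sigma_{[\mathsf{D}_1,\mathsf{D}_2]}$ and then verify \eqref{multi.courant.der} and \eqref{multi.courant.antisymm}). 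Skew-symmetry, i.e. $[\C_2,\C_1]=(-1)^{(r-2)(s-2)+1}[\C_1,\C_2]$ up to the correct sign, is imposed on the base cases and then propagated: one feeds the skew-symmetric recursion into itself, using that the ``$i_x$'' operation is injective enough in the sense that an element of $\MC^{r+s-2}(\Eps)$ with $r+s-2\ge 3$ is determined by all its contractions $i_x(\cdot)$ (this is exactly the content that makes \eqref{eq:CRN_recursion} a legitimate recursive definition, so one must also check that the right-hand side of \eqref{eq:CRN_recursion}, as a function of $x$, actually lies in $\MC^{r+s-3}(\Eps)$ and is of the form $i_x$ of a single well-defined element of $\MC^{r+s-2}(\Eps)$).

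The central well-definedness point is therefore: given that $[\C_1,x]\in\MC^{r-1}$ and $[\C_2,x]\in\MC^{s-1}$ are already defined with all their properties, show that
\[
  x\longmapsto (-1)^s[[\C_1,x],\C_2]+[\C_1,[\C_2,x]]
\]
is $i_{(-)}$ of an element of $\MC^{r+s-2}(\Eps)$, i.e. that it satisfies the symbol condition \eqref{multi.courant.der} and the graded symmetry condition \eqref{multi.courant.antisymm} in the remaining $\Eps$-slots. For \eqref{multi.courant.der} one computes the symbol of the right-hand side explicitly: it should come out to be $x\mapsto (-1)^s[[\C_1,x],\sigma_{\C_2}(\cdots)]+\cdots$, i.e. a Leibniz-type combination of $\sigma_{\C_1}$ and $\sigma_{\C_2}$, and one checks the derivation identity by pairing with $\SP{u,w}$ and using \eqref{multi.courant.der} for $\C_1,\C_2$ and for the contractions, together with Lemma~\ref{lemma:ixC} relating $\sigma_{i_x\C}$ to $i_x\sigma_\C$. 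The symmetry condition \eqref{multi.courant.antisymm} is checked similarly, symmetrizing two adjacent $\Eps$-arguments and tracking the signs produced by moving $x$ and the symmetrization past each factor; here Lemma~\ref{lemma.courantmaps-properties} (the derivation rule) and the already-known symmetry of $\C_1,\C_2$ do the work. Once this is in place, induction on $r+s$ produces the bracket on all of $\MC^\bullet(\Eps)$, and uniqueness is automatic since every value is forced by \eqref{eq:def.CRN-Bra.00}--\eqref{eq:def.CRN-Bra.r1} and the recursion \eqref{eq:CRN_recursion}.

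For part \emph{(ii)}, the graded Jacobi identity, I would argue by induction on $r+s+t$, contracting the Jacobiator with an arbitrary $x\in\Eps$ and pushing $i_x$ through using \eqref{eq:CRN_recursion}. Writing $J(\C_1,\C_2,\C_3)=[\C_1,[\C_2,\C_3]]-[[\C_1,\C_2],\C_3]-(-1)^{rs}[\C_2,[\C_1,\C_3]]$, a direct expansion of $i_x J$ via the recursion, using the inductive hypothesis that Jacobi holds for all triples of strictly smaller total degree (in particular for the triples obtained by replacing one $\C_j$ by $[\C_j,x]$ or $i_x\C_j$), shows that $i_x J(\C_1,\C_2,\C_3)$ is a signed sum of Jacobiators of lower degree, hence vanishes; then injectivity of $x\mapsto i_x(\cdot)$ on $\MC^{\ge 3}$ gives $J=0$, while the low-degree cases where the target has degree $\le 2$ are checked by hand from \eqref{eq:def.CRN-Bra.00}--\eqref{eq:def.CRN-Bra.r1} (these reduce to the symmetry and fullness of $\SP{\cdot,\cdot}$ and to $\sigma_{\mathsf{D}}$ being a derivation).

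The main obstacle I anticipate is the bookkeeping in the well-definedness step: verifying that the right-hand side of \eqref{eq:CRN_recursion} has a \emph{symbol} and satisfies the graded-symmetry axiom \eqref{multi.courant.antisymm} requires carefully disentangling which $\Eps$-argument is being contracted, which two are being symmetrized, and which slot carries $x$, and then matching the sign $(-1)^s$ in the recursion against the signs coming from \eqref{multi.courant.antisymm} and from the parities of $\C_1,\C_2$. The algebraic input is modest — essentially Lemmas~\ref{lemma.courantmaps-properties} and~\ref{lemma:ixC}, the defining axioms of $\MC^r(\Eps)$, and fullness of the inner product — but the combinatorics of signs is where an error would most likely creep in, so I would organize the computation by first treating $[\mathsf{D},\C]$ and $[\C,\mathsf{D}]$ with $\mathsf{D}\in\MC^2$ (the step that raises/lowers no degree in a controlled way) and only then the general two-step recursion.
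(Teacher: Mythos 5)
Your plan coincides essentially step for step with the paper's own proof: both construct the bracket by induction on $N=r+s$, reduce well-definedness of the recursion to checking that the right-hand side satisfies the symbol condition and the symmetrization axiom in the remaining slots, and prove the Jacobi identity by a second induction on $r+s+t$, contracting the Jacobiator with $x\in\Eps$ and using that elements of positive degree are determined by their contractions. The only point the paper makes explicit that you leave implicit is that the induction must simultaneously carry the identity $[[\C_1,\C_2],a]=[[\C_1,a],\C_2]+[\C_1,[\C_2,a]]$ (the paper packages this as an auxiliary map $h(a)$, shows $h$ is a derivation because $a\mapsto[h(a),x]$ is and $h(a)$ has degree at least one, and reads off $\sigma_{[\C_1,\C_2]}$ and $\dif_{[\C_1,\C_2]}$ from it), which is precisely your ``Leibniz-type combination of $\sigma_{\C_1}$ and $\sigma_{\C_2}$.''
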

\begin{proof}
    We first show that the recursion~(\ref{eq:CRN_recursion}) is
    consistent with the above definitions for the low degrees. Due to
    the grading, the only thing to check is the consistency with
    (\ref{eq:def.CRN-Bra.r1}). But if $\mathsf{D} \in
    \MC^2(\Eps)$ and $x,y\in \Eps$, then
    \begin{equation}
        \label{eq:Jacobi.2.1.1}
        [[\mathsf{D},x],y] = \SP{\mathsf{D}(x),y} 
        = -\SP{\mathsf{D}(y),x} + \sigma_{\C}\SP{x,y}
        = -[[\mathsf{D},y],x] + [\mathsf{D},[x,y]].
    \end{equation}
    If $\C\in \MC^r(\Eps)$ with $r\geq 3$ then
    \[
    [[\C,x],y] = i_y i_x \C
    = -i_x i_y \C + \dif_{\C}\SP{x,y}
    = -[[\C,y],x] + \dif_{\C}\SP{x,y},
    \]
    hence (\ref{eq:CRN_recursion}) and fullness imply that $[\C,a] =
    \dif_\C a$. 
    This is still consistent since Lemma~\ref{lemma:ixC} implies
    that 
    \begin{equation}
        \label{eq:Jacobi.>=3.0.1}
        [[\C,a],x]  = [\dif_{\C} a,x] 
        = \left\{\begin{array}{rcl}
             \SP{x,\dif_{\C}a}  & =\sigma_{i_x \C}a & \text{if $r = 3$}\\
            i_x \dif_{\C} a & =\dif_{i_x \C}a   & \text{if $r \geq 4$}
        \end{array}\right\}
        = [[\C,x],a] + [\C,[a,x]].
    \end{equation}
    By construction it is clear that for $\C_1\in \MC^r(\Eps)$ and
    $\C_2\in \MC^s(\Eps)$ the recursively defined map $[\C_1,\C_2]$ is
    an element in $\Hom_\ring{R}(\bigotimes_\ring{R}^{r+s-3}\Eps,\Eps)$. It is
    further obvious that the bracket is graded skew-symmetric.  What
    remains to show is that $[\C_1,\C_2]$ is an element in
    $\MC^{r+s-2}(\Eps)$. We will prove this, together with the formula
    \[
    [[\C_1,\C_2],a] = [[\C_1,a],\C_2] + [\C_1,[\C_2,a]],
    \]
    by induction over $N = r+s$.  For $N = 1,2,3$ there is nothing
    more to show. Consider the case $N = 4$. If $a \in \A$ and $\C \in
    \MC^4(\Eps)$, then $[\C,a] = \dif_{\C} a \in \MC^2(\Eps)$, and
    moreover
    \begin{equation}
        \begin{split}
           \label{eq:Jacobi.4.0.0}
            [[\C,a],b] &= [\dif_{\C} a,b] = \delta_{\C}^{(2)}(a,b) =
            \delta_{\C}^{(2)}(b,a)\\
            & =  [\dif_{\C} b,a] = [[\C,b],a] = [[\C,b],a] +
            [\C,[a,b]].
        \end{split}
    \end{equation}
    For $x\in \Eps$ and $\C \in \MC^3(\Eps)$ we have $[\C,x] = i_x \C
    \in \MC^2(\Eps)$ and further
    \[
        [[\C,x],a] = [[\C,a],x] + [\C,[x,a]]
    \]
    by Equation~(\ref{eq:Jacobi.>=3.0.1}).  By a direct calculation we
    find further that for $\mathsf{D}_1,\mathsf{D}_2\in \MC^2(\Eps)$
    the bracket $[\mathsf{D}_1,\mathsf{D}_2]$ is again in
    $\MC^2(\Eps)$ and that
    \[
    \sigma_{[\mathsf{D}_1,\mathsf{D}_2]}a = \sigma_{\mathsf{D}_1}\sigma_{\mathsf{D}_2}a -
    \sigma_{\mathsf{D}_2}\sigma_{\mathsf{D}_1} a =
    [\sigma_{\mathsf{D}_1}a,\mathsf{D}_2] +
    [\mathsf{D}_1,\sigma_{\mathsf{D}_2}a],
    \]
    i.e.
    \begin{equation}
        \label{eq:Jacobi.2.2.0}
        [[\mathsf{D}_1,\mathsf{D}_2],a] = [[\mathsf{D}_1,a],\mathsf{D}_2]
        + [\mathsf{D}_1,[\mathsf{D}_2,a]]. 
    \end{equation} 
    So let $\C_1 \in \MC^r(\Eps)$ and $\C_2
    \in \MC^s(\Eps)$ with $r+s \geq 5$. By~(\ref{eq:CRN_recursion}) we have
    \[
    [[\C_1,\C_2],x] = (-1)^s [[\C_1,x],\C_2] + [\C_1,[\C_2,x]],
    \]
    and by induction we conclude that $ [[\C_1,\C_2],x]\in
    \MC^{r+s-3}(\Eps)$  for all $x \in \Eps$ and further that
    \begin{align*}
        [[[\C_1,\C_2],x],a] &= (-1)^s [[[\C_1,x],\C_2],a] +
        [[\C_1,[\C_2,x]],a]\\
        &= (-1)^s [[[\C_1,x],a],\C_2] + (-1)^s [[\C_1,x],[\C_2,a]] +
        [[\C_1,a],[\C_2,x]] + [\C_1,[[\C_2,x],a]]\\
        &= (-1)^s [[[\C_1,a],x],\C_2] + (-1)^s [[\C_1,x],[\C_2,a]] +
        [[\C_1,a],[\C_2,x]] + [\C_1,[[\C_2,a],x]]\\
        & = [[[\C_1,a],\C_2] + [\C_1,[\C_2,a]],x ]
    \end{align*}
    for all $a\in \A$.
    Consider now the map $h:\A \longrightarrow \MC^{r+s-4}(\Eps)$
    defined by
    \[
    h(a) =   [[\C_1,a],\C_2 ] + [\C_1,[\C_2, a]].
    \]
    We know that the map
    \[
    a \mapsto [[[\C_1,\C_2],x],a] = [h(a),x]
    \]
    is a derivation. Since obviously $[a \C,x] = a [\C,x]$ for all $a\in
    \A$, $x\in \Eps$ and $\C \in \MC^\bullet(\Eps)$ it follows that
    \[
    [h(ab),x] = [ah(b) + h(a) b,x]
    \]
    for all $a,b \in \A$ and $x\in \Eps$. But since the degree of $h(a)$
    is at least one, this implies that
    \[
    h(ab) = ah(b) + h(a) b
    \]
    for all $a,b \in \A$, i.e. $h \in \Der(\A,\MC^{r+s-4}(\Eps))$.  By
    construction we have $i_x h(a) = \dif_{[[\C_1,\C_2],x]}a$, hence
    \begin{align*}
        \SP{h(\SP{u,v})(x_1,\ldots, x_{r+s-3}),x_{r+s-2}}
        &= \SP{[\C_1,\C_2](x_1,\ldots,x_{r+s-2},u),v}\\
        &\quad+ \SP{[\C_1,\C_2](x_1,\ldots,x_{r+s-2},v),u}
    \end{align*}
    and 
    \begin{align*}
        h(\SP{x_i,x_{i+1}})(x_1,\elidetwo{i}{i+1},
        x_{r+s-1})
        &= [\C_1,\C_2](x_1,\ldots,x_i,x_{i+1},\ldots,x_{r+s-1})\\
        &\quad+ [\C_1,\C_2](x_1,\ldots,x_{i+1},x_i,\ldots,x_{r+s-1})
    \end{align*}
    for all $x_1,\ldots,x_{r+s-1},u,v \in \Eps$ and $2\leq i \leq
    r+s-2$.  It remains to show the last equation also for $i = 1$.
    But by the recursion rule we find that
    \begin{align*}
        [[[\C_1,\C_2],x],y] + [[[\C_1,\C_2],y],x] &= [[\C_1,[x,y]],\C_2] +
        [\C_1,[\C_2,[x,y]]]\\ 
        & = [[\C_1, \SP{x,y}],\C_2] + [\C_1,[\C_2,\SP{x,y}]]
    \end{align*}
    and hence
    \begin{align*}
        h(\SP{x_1,x_2})(x_3,\ldots, x_{r+s-1})
        &= [\C_1,\C_2](x_1,x_2,x_3,\ldots,x_{r+s-1})\\
        &\quad+ [\C_1,\C_2](x_2,x_1,x_3,\ldots,x_{r+s-1}).
    \end{align*}
    Therefore $[\C_1,\C_2] \in \MC^{r+s-2}(\Eps)$ with symbol
    \[
    \sigma_{[\C_1,\C_2]}(x_1,\ldots,x_{r+s-2})a =
    \SP{h(a)(x_1,\ldots,x_{r+s-3}),x_{r+s-2}}
    \]
    and
    \[
    \dif_{[\C_1,\C_2]} a = h(a) = [\dif_{\C_1}a,\C_2] +
    [\C_1,\dif_{\C_2} a ].
    \]
    Consider now the Jacobiator
    \[
    J(\C_1,\C_2,\C_3) = [\C_1,[\C_2,\C_3]] - [[\C_1,\C_2],\C_3] - (-1)^{rs}
    [\C_2,[\C_1,\C_3]]
    \]
    with $\C_1 \in \MC^r(\Eps)$, $\C_2 \in \MC^s(\Eps)$ and $\C_3 \in
    \MC^t(\Eps)$. We will show by induction over $N = r+s+t$ that the
    Jacobiator vanishes. Because of skew-symmetry we can assume that
    $r\leq s \leq t$. Since $[\cdot,\cdot]$ is of degree $-2$, there
    is nothing to prove for $N = 0,1,2,3$.  With the case $N = 4$ we
    are already done due to the identities~(\ref{eq:Jacobi.2.1.1}),
    (\ref{eq:Jacobi.>=3.0.1}), (\ref{eq:Jacobi.4.0.0}) and
    (\ref{eq:Jacobi.2.2.0}). Using (\ref{eq:CRN_recursion}) we further
    get
    \begin{align*}
        [J(\C_1,\C_2,\C_3),x] &=  [J(\C_1,\C_2,\C_3),x]\\
        & = (-1)^{s+t} J([\C_1,x],\C_2,\C_3) + (-1)^t
        J(\C_1,[\C_2,x],\C_3) +  J(\C_1,\C_2,[\C_3,x])
    \end{align*}
    for all $x \in \Eps$, and induction yields now $[J(\C_1,\C_2,\C_3),x]= 0$
    for all $x\in \Eps$. Since for $N\geq 5$ the Jacobiator
    $J(\C_1,\C_2,\C_3)$ is at least of degree one, we conclude that also
    $J(\C_1,\C_2,\C_3)=0$.
\end{proof}
\begin{corollary}\label{cor:Formula_Bracket}
    For $\C_1 \in \MC^r(\Eps)$, $\C_2 \in \MC^s(\Eps)$ with $r,s \geq
    2$ the bracket $[\C_1,\C_2]$ coincides with the bracket defined in
    \cite{balavoine:1995a, rotkiewicz:2005a} restricted to
    $\bigoplus_{r\geq 2}\MC^r(\Eps)$ after adapting the sign
    conventions appropriately.
\end{corollary}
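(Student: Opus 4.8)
The plan is to recognise the bracket of \cite{balavoine:1995a,rotkiewicz:2005a} as exactly the bracket pinned down by Proposition~\ref{proposition:CRN-Bracket}. Recall that on $C^\bullet:=\bigoplus_{r\geq 2}\Hom_{\ring{R}}(\bigotimes_{\ring{R}}^{r-1}\Eps,\Eps)$ --- which contains $\bigoplus_{r\geq 2}\MC^r(\Eps)$, the pieces $\MC^0(\Eps)$ and $\MC^1(\Eps)$ having no operadic counterpart, which is why the statement restricts to $r\geq 2$ --- that bracket is the graded commutator $[\C_1,\C_2]'$ of a circle product $\C_1\bar\circ\C_2$, the latter being the signed sum of the terms $\C_1(\ldots,\C_2(\ldots),\ldots)$ obtained by feeding $\C_2$ and a prescribed distribution of the remaining arguments into $\C_1$, with the signs dictated by the Leibniz operad. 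It is a graded Lie bracket of degree $-2$ whose elements $m\in\Hom_{\ring{R}}(\Eps\otimes_{\ring{R}}\Eps,\Eps)$ with $[m,m]'=0$ are the Leibniz brackets on $\Eps$.

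I would then argue by induction over the total degree $N=r+s\geq 4$ that $[\C_1,\C_2]'=[\C_1,\C_2]$ for all $\C_1\in\MC^r(\Eps)$, $\C_2\in\MC^s(\Eps)$ with $r,s\geq 2$. In the base case $N=4$, i.e.\ $r=s=2$, both brackets are the commutator $\mathsf{D}_1\mathsf{D}_2-\mathsf{D}_2\mathsf{D}_1$ of endomorphisms of $\Eps$: for $[\cdot,\cdot]'$ because the circle product in arity one is composition, and for $[\cdot,\cdot]$ by unwinding \eqref{eq:CRN_recursion} once together with \eqref{eq:def.CRN-Bra.r1}. For the inductive step it is enough to check that $[\cdot,\cdot]'$ itself obeys the recursion \eqref{eq:CRN_recursion}; for $r,s\geq 2$ the right-hand side of \eqref{eq:CRN_recursion} refers only to first-slot insertions $[\C,x]=i_x\C$ (which on the $[\cdot,\cdot]'$-side is the circle product with an arity-zero cochain, see below) and to brackets $[\cdot,\cdot]'$ of strictly smaller total degree, so the induction hypothesis and the graded skew-symmetry of $[\cdot,\cdot]'$, combined with the uniqueness in Proposition~\ref{proposition:CRN-Bracket}, force $[\C_1,\C_2]'=[\C_1,\C_2]$. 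In particular $[\C_1,\C_2]'\in\MC^{r+s-2}(\Eps)$, so the restriction in the statement is well defined.

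The heart of the matter, and the step I expect to be the main obstacle, is the verification that $[\cdot,\cdot]'$ satisfies \eqref{eq:CRN_recursion}. This reduces to two facts about the operadic composition, both established by unwinding the definition of the circle product: (i) the Leibniz circle product of a cochain with an arity-zero cochain $x\in\Eps$ collapses to plain insertion into the first argument, $\C\bar\circ x=i_x\C$; and (ii) first-slot insertion $i_x$ is a graded derivation of $[\cdot,\cdot]'$, that is $i_x[\C_1,\C_2]'=(-1)^s[\,i_x\C_1,\C_2\,]'+[\,\C_1,i_x\C_2\,]'$, which via (i) is what the graded Jacobi identity for the Leibniz Gerstenhaber bracket becomes on a triple one of whose entries is the arity-zero cochain $x$. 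The only genuine work in (i) and (ii) is the bookkeeping of the Leibniz shuffle signs according to whether the distinguished argument is routed into $\C_1$ or into $\C_2$. Alternatively, one can bypass \eqref{eq:CRN_recursion} altogether and simply unroll the recursion in $\C_1,\C_2$ with $r,s\geq 2$ all the way down to arity zero: this expresses $[\C_1,\C_2](x_1,\ldots,x_{r+s-3})$ as a signed sum over all ways of plugging one of $\C_1,\C_2$ and the $x_i$ into the other, which is then matched term by term against the defining sum of the circle product; once more the content is the reconciliation of the signs.
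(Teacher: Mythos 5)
Your proposal is correct and follows essentially the same route as the paper, whose entire proof is the single sentence ``The proof is obtained by an inductive argument'': you carry out precisely that induction over the total degree $N=r+s$, anchoring it at $N=4$ and propagating it via the recursion \eqref{eq:CRN_recursion} together with the uniqueness statement of Proposition~\ref{proposition:CRN-Bracket}. Your identification of the two facts that actually need checking --- that the operadic circle product with an arity-zero cochain is first-slot insertion, and that this insertion is a graded derivation of the Balavoine--Rotkiewicz bracket --- supplies exactly the detail the paper omits.
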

The bracket defined in Proposition~\ref{proposition:CRN-Bracket} makes
$\MC^\bullet(\Eps)$ to a graded Lie algebra. The more explicit
formulas from \cite{balavoine:1995a, rotkiewicz:2005a} will not be
needed in the sequel.

In a next step we define an associative, graded commutative product
$\wedge$ on $\MC^\bullet(\Eps)$:
\begin{proposition}
    \label{proposition:WedgeProduct}
    There exists an associative, graded commutative $\ring{R}$-bilinear
    product $\wedge$ of degree zero on $\MC^\bullet(\Eps)$ uniquely defined by
    the equations
    \begin{equation}\label{eq:wedge.0.0}
        a \wedge b = a b = b \wedge a
    \end{equation}
    and
    \begin{equation}\label{eq:wedge.0.1}
        a \wedge x = a x = x \wedge a
    \end{equation}
    for all $a, b \in \A$ and $x \in \Eps$, and by the recursion rule
    \begin{equation}\label{eq:wedge-recursion-rule}
        [\C_1\wedge \C_2,x]
        = (-1)^s [\C_1,x]\wedge \C_2 + \C_1 \wedge [\C_2,x]
    \end{equation}
    for all $\C_1 \in \MC^r(\Eps)$, $\C_2 \in \MC^s(\Eps)$ and $x \in
    \Eps$.
\end{proposition}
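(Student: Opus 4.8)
The plan is to construct $\wedge$ by induction on the total degree $N=r+s$ of a pair $(\C_1,\C_2)\in\MC^r(\Eps)\times\MC^s(\Eps)$ and to extend it $\ring{R}$-bilinearly. For $N\le 1$ the value is prescribed by \eqref{eq:wedge.0.0} and \eqref{eq:wedge.0.1} and manifestly lies in $\MC^0(\Eps)=\A$ resp.\ $\MC^1(\Eps)=\Eps$. For $N\ge 2$ the rule \eqref{eq:wedge-recursion-rule} forces us to set
\begin{equation*}
    i_x(\C_1\wedge\C_2) \;:=\; (-1)^s\,[\C_1,x]\wedge\C_2 \;+\; \C_1\wedge[\C_2,x]
    \qquad (x\in\Eps),
\end{equation*}
where both summands are already defined, since $[\C_i,x]$ has degree one less than $\C_i$, so each summand has total degree $N-1$. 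As this expression is $\ring{R}$-linear in $x$ (and $\ring{R}$-bilinear in $(\C_1,\C_2)$ by induction), it determines a unique element $\C_1\wedge\C_2\in\Hom_\ring{R}(\bigotimes_\ring{R}^{r+s-1}\Eps,\Eps)$. Uniqueness of a product satisfying \eqref{eq:wedge.0.0}, \eqref{eq:wedge.0.1} and \eqref{eq:wedge-recursion-rule} follows from the same formula together with the elementary remark that an element of $\MC^n(\Eps)$ with $n\ge 1$ is determined by the family $\{\,[\cdot,x]\,\}_{x\in\Eps}$: for $n\ge 2$ because $[\C,x]=i_x\C$ recovers $\C$, and for $n=1$ because $[\C,x]=\SP{\C,x}$ and $\SP{\cdot,\cdot}$ is nondegenerate. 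A short separate induction on $\deg\C$ shows $a\wedge\C=\C\wedge a=a\C$ for $a\in\A$ (using $[a,x]=0$ from \eqref{eq:def.CRN-Bra.01}), so that the $\A$-module structure attached to $\wedge$ is the given one; that $\wedge$ has degree zero is clear from the construction.

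The essential point is that $\C_1\wedge\C_2$ actually lies in $\MC^{r+s}(\Eps)$; I would prove this by the same induction on $N$, strengthened by a Leibniz-type description of the symbol. First, $i_x(\C_1\wedge\C_2)$ is a sum of two wedge products of total degree $N-1$, hence lies in $\MC^{r+s-1}(\Eps)$ by the induction hypothesis; by the characterisation of $\MC^{r+s}(\Eps)$ through iterated insertions this already yields all of \eqref{multi.courant.der} and \eqref{multi.courant.antisymm} for $\C_1\wedge\C_2$ except the symmetrisation in its first two slots, together with a well-defined candidate $\sigma_{\C_1\wedge\C_2}$. For the remaining symmetrisation I would, for fixed $x,y\in\Eps$, compute $i_yi_x(\C_1\wedge\C_2)+i_xi_y(\C_1\wedge\C_2)$ directly from the recursion applied twice and from the induction hypothesis, and then recognise the outcome as $\dif_{\C_1\wedge\C_2}\SP{x,y}$ by comparing with the double-insertion identities from the proof of Proposition~\ref{proposition:CRN-Bracket} (which give $i_yi_x\C+i_xi_y\C=\dif_\C\SP{x,y}$ on $\MC^{\ge 3}(\Eps)$, resp.\ $\sigma_\C\SP{x,y}$ on $\MC^2(\Eps)$) and with Lemma~\ref{lemma:ixC}; the requisite symbol $\sigma_{\C_1\wedge\C_2}$, obtained by letting $\sigma$ and $\dif$ distribute over the two factors (so that $[\C_1\wedge\C_2,a]=[\C_1,a]\wedge\C_2+\C_1\wedge[\C_2,a]$ for $a\in\A$, with $[\C_i,a]=0$ when $\deg\C_i\le 1$), is a sum of $\A$-multiples of the derivations $\sigma_{\C_1}$ and $\sigma_{\C_2}$ and hence $\Der(\A)$-valued. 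Together with fullness this establishes $\C_1\wedge\C_2\in\MC^{r+s}(\Eps)$. I expect this verification to be the main obstacle — not for conceptual reasons, but because one must keep track of all the Koszul signs and treat separately the degenerate cases $r,s\in\{0,1,2\}$, exactly as in the corresponding induction for the bracket $[\cdot,\cdot]$ in Proposition~\ref{proposition:CRN-Bracket}.

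It remains to establish graded commutativity $\C_1\wedge\C_2=(-1)^{rs}\C_2\wedge\C_1$ and associativity $(\C_1\wedge\C_2)\wedge\C_3=\C_1\wedge(\C_2\wedge\C_3)$, and again I would induct on the total degree. For $N\le1$ each identity reduces via \eqref{eq:wedge.0.0} and \eqref{eq:wedge.0.1} to commutativity resp.\ associativity of the product of $\A$ and of the $\A$-action on $\Eps$. For larger $N$, apply $[\cdot,x]$ to both sides and expand every term with \eqref{eq:wedge-recursion-rule}: all the resulting products have strictly smaller total degree, so the induction hypothesis permits re-ordering resp.\ re-associating them, and a direct comparison of the Koszul signs shows that the two sides of $[\cdot,x]$ coincide for every $x\in\Eps$. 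Since in these cases the elements have degree $\ge 1$, the determination principle of the first paragraph forces the identity itself. This completes the verification that $(\MC^\bullet(\Eps),\wedge)$ is an associative, graded commutative, degree-zero $\ring{R}$-algebra satisfying the stated recursion.
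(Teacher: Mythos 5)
Your proposal is correct and follows essentially the same route as the paper: induction on the total degree $N=r+s$, defining $i_x(\C_1\wedge\C_2)$ via the recursion, proving membership in $\MC^{r+s}(\Eps)$ by identifying $[\C_1\wedge\C_2,a]=[\C_1,a]\wedge\C_2+\C_1\wedge[\C_2,a]$ as the symbol datum and checking the symmetrisation via $(i_yi_x+i_xi_y)(\C_1\wedge\C_2)$, and then obtaining associativity by the same kind of induction used for the Jacobi identity in Proposition~\ref{proposition:CRN-Bracket}. The only cosmetic difference is that the paper derives the derivation property of $a\mapsto[\C_1,a]\wedge\C_2+\C_1\wedge[\C_2,a]$ from the already-established derivation property of $a\mapsto[i_x(\C_1\wedge\C_2),a]$ together with the fact that the target has degree $\geq 1$, whereas you assert it directly from the distributive form of the symbol; both are fine.
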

\begin{proof}
   Clearly the recursion rule (\ref{eq:wedge-recursion-rule}) is
   consistent with the definitions (\ref{eq:wedge.0.0}) and
   (\ref{eq:wedge.0.1}).  Moreover, if $\wedge$ exists, it must be
   homogeneous of degree zero and graded commutative.  We prove now by
   induction over $N = r+s$ that for $\C_1 \in \MC^r(\Eps)$ and $\C_2 \in
   \MC^s(\Eps)$ the map
    \[
    (x_1,\ldots,x_{r+s-1}) \longmapsto (i_{x_1}(\C_1\wedge
    \C_2))(x_2,\ldots,x_{r+s-1}) 
    \]
    is an element in $\MC^{r+s}(\Eps)$, and that
    \[
    [\C_1\wedge \C_2,a] 
    =  [\C_1,a] \wedge \C_2 +  \C_1 \wedge [\C_2,a]
    \]
    for all $a\in \A$.  For $N = 0,1$ there is nothing to show.  If
    $a\in \A$ and $\mathsf{D} \in \MC^2(\Eps)$, then
    (\ref{eq:wedge-recursion-rule}) implies that
    \[
    [a\wedge \mathsf{D} ,x] 
    = a\wedge [\mathsf{D},x] 
    = a \mathsf{D}(x)
    = [a \mathsf{D},x] 
    \]
    for all $x\in\Eps$, hence $a \wedge \mathsf{D} = a \mathsf{D} \in
    \MC^2(\Eps)$ and $[a\wedge \mathsf{D},b] = \sigma_{a \mathsf{D}}b
    = a \sigma_{\mathsf{D}} b = a \wedge [D,b]$.  For $x,y,z\in \Eps$
    we get $i_z(x\wedge y) = -\SP{x,z} y + x \SP{y,z}$, whence $x
    \wedge y \in \MC^2(\Eps)$ with vanishing symbol and thus $[x\wedge
    y, a ] = 0 = [x,a]\wedge y + x \wedge [a,y]$.  Suppose now $N =
    r+s \geq 3$. By induction we find that
    \begin{align*}
        [i_x (\C_1,\wedge \C_2),a]& = [(-1)^s [\C_1,x]\wedge \C_2 + \C_1
        \wedge
        [\C_2,x],a]\\
        & = (-1)^s [[\C_1,x],a]\wedge \C_2 + (-1)^s [\C_1,x]\wedge
        [\C_2,a] +
        [\C_1,a]  \wedge  [\C_2,x] + \C_1 \wedge[[\C_2,x],a]\\
        & = (-1)^s [[\C_1,a],x]\wedge \C_2 + (-1)^s [\C_1,x]\wedge
        [\C_2,a] +
        [\C_1,a]  \wedge  [\C_2,x] + \C_1 \wedge[[\C_2,a],x]\\
        & = [[\C_1,a]\wedge \C_2 + \C_1 \wedge [\C_2,a],x].
    \end{align*}
    Hence the map $a \longmapsto [[\C_1,a]\wedge \C_2 + \C_1 \wedge
    [\C_2,a],x]$ is a derivation. Since the degree of $[\C_1,a]\wedge
    \C_2 + \C_1 \wedge [\C_2,a]$ is at least one, the map
    \[
    h(a) =  [\C_1,a]\wedge \C_2 + \C_1 \wedge
    [\C_2,a]
    \]
    is also a derivation. We have to show that $h$ is the map
    $\dif_{\C_1\wedge \C_2}$. By construction we already know that
    $[h(a),x] = \dif_{ i_x (\C_1\wedge \C_2)}a$. With a short
    calculation using the recursion rule we further find that
    \begin{align*}
        (i_y i_x + i_x i_y)(\C_1\wedge \C_2) & = (i_y i_x + i_x i_y) \C_1
        \wedge \C_2  + \C_1\wedge  (i_y i_x + i_x i_y)\C_2\\
        &= [\C_1,\SP{x,y}]
        \wedge \C_2  + \C_1\wedge  [\C_2,\SP{x,y}]\\
        &= h(\SP{x,y}),
    \end{align*}
    hence $\C_1\wedge \C_2$ is in fact in $\MC^{r+s}(\Eps)$ and
    $\dif_{\C_1\wedge \C_2} a = [\C_1,a]\wedge \C_2 + \C_1 \wedge
    [\C_2,a]$.  The associativity can now easily be proven by
    induction in a similar way we have proven the Jacobi identity for
    $[\cdot,\cdot]$ in Theorem~\ref{proposition:CRN-Bracket}.
\end{proof}   
\begin{corollary}
    Let $\C_1\in \MC^r(\Eps)$, $\C_2\in\MC^s(\Eps)$ with $r,s \geq 1$.
    Then $\C_1\wedge \C_2$ is given by
    \begin{equation*}\begin{split}
      \C_1\wedge \C_2 &(x_1,\ldots,x_{r+s-1}) \\
      &= (-1)^{rs} \sum_{\pi
        \in\shuffle{r,s-1}}\operatorname{\sign}(\pi)
      \SP{\C_1(x_{\pi(1)},\ldots,x_{\pi(r-1)}),x_{\pi(r)}}
      \C_2(x_{\pi(r+1)}\ldots,x_{\pi(r+s-1)}) \\
      &\quad + \sum_{\pi \in\shuffle{s,r-1}} \operatorname{\sign}(\pi)
      \SP{\C_2(x_{\pi(1)},\ldots,x_{\pi(s-1)}),x_{\pi(s)}}
      \C_1(x_{\pi(s+1)}\ldots,x_{\pi(r+s-1)}),
    \end{split}
  \end{equation*}
  where $\shuffle{p,q}$ denotes the $(p,q)$-shuffle permutations.
\end{corollary}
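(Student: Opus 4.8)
The plan is to prove \eqref{eq:formel-wedge-MC} by induction on $N = r+s$, identifying its right-hand side with $\C_1\wedge\C_2$ through the recursive characterisation of $\wedge$ given in Proposition~\ref{proposition:WedgeProduct}. Write $F(\C_1,\C_2)$ for the right-hand side of \eqref{eq:formel-wedge-MC}, defined for $\C_1\in\MC^r(\Eps)$ and $\C_2\in\MC^s(\Eps)$ with $r,s\ge 1$, and extend $F$ to the degenerate degrees by $F(a,\C)=a\C=F(\C,a)$ for $a\in\A$. The cases $N = 0,1$ are then nothing but \eqref{eq:wedge.0.0} and \eqref{eq:wedge.0.1}, and a direct evaluation of the defining sum gives $F(x,y)(z) = -\SP{x,z}\,y + \SP{y,z}\,x = (x\wedge y)(z)$ for $x,y,z\in\Eps$, which settles $N = 2$. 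For the inductive step, recall that $\C_1\wedge\C_2$ is the unique element of $\Hom_\ring{R}(\bigotimes^{r+s-1}\Eps,\Eps)$ with $i_x(\C_1\wedge\C_2) = (-1)^s[\C_1,x]\wedge\C_2 + \C_1\wedge[\C_2,x]$ for all $x\in\Eps$; since by the induction hypothesis $F$ coincides with $\wedge$ in all total degrees below $N$, it therefore suffices to verify
\begin{equation*}
    i_x F(\C_1,\C_2) = (-1)^s\, F\big([\C_1,x],\C_2\big) + F\big(\C_1,[\C_2,x]\big)
\end{equation*}
whenever $r+s = N$, where $[\C_1,x] = i_x\C_1 \in \MC^{r-1}(\Eps)$ if $r\ge 2$ and $[\C_1,x] = \SP{x,\C_1}\in\A$ if $r = 1$, and similarly for $\C_2$.

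To check this identity I would substitute $x_1 := x$ into the defining double sum of $F(\C_1,\C_2)$ and split the summation according to which shuffle block contains the index $1$. In the first sum, over $\pi\in\shuffle{r,s-1}$, minimality of $1$ forces $1 = \pi(1)$, so that $1$ belongs to the block indexing $\C_1$, or $1 = \pi(r+1)$, so that it belongs to the block indexing $\C_2$ (which requires $s\ge 2$). In the former case $x$ becomes the first argument of $\C_1$ and produces $i_x\C_1$ when $r\ge 2$, while for $r = 1$ it occupies the pairing slot $x_{\pi(r)}$ and produces the scalar $\SP{x,\C_1}$; deleting the index $1$ turns $\pi$ into a shuffle of type $(r-1,s-1)$ with unchanged sign. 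In the latter case $x$ becomes the first argument of $\C_2$, and deleting the index $1$ — which sits to the right of the $r$ larger indices $\pi(1),\ldots,\pi(r)$ — turns $\pi$ into a shuffle of type $(r,s-2)$ and multiplies its sign by $(-1)^r$. The second sum, over $\pi\in\shuffle{s,r-1}$, is handled by the same analysis with the roles of $\C_1,\C_2$ and of $r,s$ interchanged. After relabelling the $r+s-2$ surviving indices as $1,\ldots,r+s-2$, the four resulting families of terms reassemble precisely into the two sums of $(-1)^s F([\C_1,x],\C_2)$ and the two sums of $F(\C_1,[\C_2,x])$.

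The genuine work here is the bookkeeping of signs, and this is the step I expect to be the (routine) obstacle. One must match the prefactor $(-1)^{rs}$ of the first sum of $F(\C_1,\C_2)$, multiplied by the factor $1$ or $(-1)^r$ picked up when the index $1$ is deleted, against the prefactors $(-1)^{(r-1)s}$ and $(-1)^{r(s-1)}$ of the two sums of $F([\C_1,x],\C_2)$ and $F(\C_1,[\C_2,x])$ together with the overall sign $(-1)^s$ demanded by the recursion; this reduces to the parity identities $rs = s + (r-1)s$ and $rs + r \equiv r(s-1) \pmod 2$, and to their mirror images arising from the second sum. A little extra care is needed at the boundary $r = 1$ or $s = 1$, where the pairing slot of a factor coincides with the first argument of its block, or one shuffle block is empty; there the term in which $x$ lands in the pairing slot is exactly the scalar-multiplication value $F(\SP{x,\C_i},\cdot)$ prescribed by the convention above, so the identity still closes up. Once this identity is in place, Proposition~\ref{proposition:WedgeProduct} yields $F(\C_1,\C_2) = \C_1\wedge\C_2$, which is the assertion.
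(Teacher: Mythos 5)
Your proposal is correct and follows essentially the route the paper intends: the paper's proof is exactly an induction over $N=r+s$, and since Proposition~\ref{proposition:WedgeProduct} characterises $\wedge$ uniquely by the base cases and the recursion \eqref{eq:wedge-recursion-rule}, verifying that the shuffle formula satisfies $i_xF(\C_1,\C_2)=(-1)^sF([\C_1,x],\C_2)+F(\C_1,[\C_2,x])$ together with the degree $\le 2$ cases does establish the claim. Your sign bookkeeping (the parities $rs=s+(r-1)s$ and $rs+r\equiv r(s-1)$, the factor $(-1)^r$ from deleting the minimal index out of the second block, and the boundary convention $F(a,\C)=a\C$ when a factor has degree $1$) is the correct and complete content of the inductive step.
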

\begin{proof}
    The proof can be done by induction over $N = r+s$.
\end{proof}

Since $\MC^\bullet(\Eps) \cong \Comega^\bullet(\Eps)$, we can
transport the product $\wedge$ to $\Comega^\bullet(\Eps)$ with the
isomorphism given in Lemma~\ref{lemma.iso-maps-forms}, i.e. we set
\begin{equation*}
  \omega_{\C_1}\wedge \omega_{\C_2} = \omega_{\C_1\wedge
    \C_2}
\end{equation*}
for all $\C_1,\C_2 \in \MC^\bullet(\Eps)$. A little computation yields
then to the next corollary.
\begin{corollary}
  Let $\omega_1 \in \Comega^r(\Eps)$ and $\omega_2\in
  \Comega^s(\Eps)$ with  $r,s \geq 1$, then $\omega_1\wedge\omega_2$ is given
  by 
  \begin{equation*}
    \begin{split}
      \omega_1\wedge\omega_2&(x_1,\ldots,x_{r+s})\\
      &= (-1)^{rs}\sum_{\pi \in\shuffle{r,s}}\operatorname{\sign}(\pi)
      \omega_1(x_{\pi(1)},\ldots,x_{\pi(r)})
      \omega_2(x_{\pi(r+1)}\ldots,x_{\pi(r+s)}).
    \end{split}  \end{equation*}
  For $a\in \A$ and $\omega \in \Comega^\bullet(\Eps)$ we have
  \begin{equation*}
    a \wedge \omega =  a\omega = \omega \wedge a.
  \end{equation*}
\end{corollary}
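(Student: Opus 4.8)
The plan is to pull the assertion back to the already‑proven formula \eqref{eq:formel-wedge-MC} on $\MC^\bullet(\Eps)$ via the isomorphism of Lemma~\ref{lemma.iso-maps-forms}. Pick the unique $\C_1 \in \MC^r(\Eps)$ and $\C_2 \in \MC^s(\Eps)$ with $\omega_1 = \omega_{\C_1}$ and $\omega_2 = \omega_{\C_2}$. By the definition of the transported product, $\omega_1 \wedge \omega_2 = \omega_{\C_1 \wedge \C_2}$, hence
\[
(\omega_1 \wedge \omega_2)(x_1, \ldots, x_{r+s}) = \SP{(\C_1 \wedge \C_2)(x_1, \ldots, x_{r+s-1}), x_{r+s}}.
\]
Next I would substitute \eqref{eq:formel-wedge-MC} evaluated on $(x_1, \ldots, x_{r+s-1})$ and pair the result with $x_{r+s}$. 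Since $\SP{\cdot,\cdot}$ is $\A$-bilinear, the pairing passes through the scalar coefficients $\SP{\C_i(\ldots), x_{\pi(\cdot)}}$, which by \eqref{eq:omegaCDef} are precisely values of $\omega_i$; and pairing the remaining factor $\C_2(x_{\pi(r+1)}, \ldots, x_{\pi(r+s-1)})$ (from the first sum) or $\C_1(x_{\pi(s+1)}, \ldots, x_{\pi(r+s-1)})$ (from the second sum) with $x_{r+s}$ turns it, via \eqref{eq:omegaCDef} again, into $\omega_2(x_{\pi(r+1)}, \ldots, x_{\pi(r+s-1)}, x_{r+s})$ or $\omega_1(x_{\pi(s+1)}, \ldots, x_{\pi(r+s-1)}, x_{r+s})$ respectively. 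After these identifications one is left with a sum of products $\omega_1 \cdot \omega_2$ indexed by $\shuffle{r, s-1}$ with prefactor $(-1)^{rs}$, plus a sum of products $\omega_2 \cdot \omega_1$ indexed by $\shuffle{s, r-1}$ with no prefactor, both over the $r+s-1$ letters $x_1, \ldots, x_{r+s-1}$ and with $x_{r+s}$ appended as the final argument.

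The remaining step is purely combinatorial: recognize the two index sets as the two halves of $\shuffle{r,s}$ cut out by the position of the maximal letter $r+s$. Every $\pi \in \shuffle{r,s}$ on $\{1, \ldots, r+s\}$ places $r+s$ either at the end of its first $r$-block (so $\pi(r) = r+s$) or at the end of its second $s$-block (so $\pi(r+s) = r+s$), which partitions $\shuffle{r,s}$. The subset with $\pi(r+s) = r+s$ restricts bijectively to $\shuffle{r, s-1}$ on $\{1, \ldots, r+s-1\}$ with unchanged sign, matching the first sum term by term — and the prefactor $(-1)^{rs}$ already present in \eqref{eq:formel-wedge-MC} is exactly the one wanted in \eqref{eq:formel-wedge-Omega}. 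The subset with $\pi(r) = r+s$ corresponds bijectively to $\shuffle{s, r-1}$ on $\{1, \ldots, r+s-1\}$ by cyclically moving the leading $s$-block to the end; the block transposition involved has sign $(-1)^{rs}$, so the sign of the $(s,r-1)$-shuffle equals $(-1)^{rs}$ times that of the associated $(r,s)$-shuffle. Using commutativity of $\A$ to rewrite $\omega_2 \cdot \omega_1$ as $\omega_1 \cdot \omega_2$ and absorbing this extra sign, the second sum becomes $(-1)^{rs}$ times the sum over the $\pi(r)=r+s$ half of $\shuffle{r,s}$. Adding the two halves gives \eqref{eq:formel-wedge-Omega}.

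For the last assertion, note that under Lemma~\ref{lemma.iso-maps-forms} scalar multiplication is intertwined with scalar multiplication, so $\omega_{a\C} = a\,\omega_\C$ for $a \in \A$, while the recursion \eqref{eq:wedge-recursion-rule} together with $[a,x]=0$ gives $a \wedge \C = a\C$ in $\MC^\bullet(\Eps)$ by a trivial induction over degree of the same kind as in the proof of Proposition~\ref{proposition:WedgeProduct}; hence $a \wedge \omega = a\omega$, and $\omega \wedge a = a \wedge \omega$ follows from graded commutativity of $\wedge$ and $\deg a = 0$.

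The only point that calls for care is the sign accounting in the second sum: the asymmetry of the two prefactors in \eqref{eq:formel-wedge-MC} — a $(-1)^{rs}$ on the $\shuffle{r,s-1}$-part, none on the $\shuffle{s,r-1}$-part — must be matched exactly by the sign $(-1)^{rs}$ of the block transposition relating $\shuffle{s,r-1}$ to the half of $\shuffle{r,s}$ with $\pi(r)=r+s$, so that, once $x_{r+s}$ is appended, both halves acquire the common prefactor $(-1)^{rs}$ of \eqref{eq:formel-wedge-Omega}.
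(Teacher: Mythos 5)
Your argument is correct and is essentially the computation the paper has in mind: the paper gives no written proof beyond the remark that the formula follows from transporting \eqref{eq:formel-wedge-MC} through the isomorphism of Lemma~\ref{lemma.iso-maps-forms}, which is exactly what you carry out. Your sign bookkeeping checks out (the split of $\shuffle{r,s}$ by the position of the maximal letter, the sign-preserving restriction to $\shuffle{r,s-1}$, and the factor $(-1)^{rs}$ from the block swap relating $\shuffle{s,r-1}$ to the $\pi(r)=r+s$ half), as one can also confirm on the case $r=s=1$ against $i_z(x\wedge y)=-\SP{x,z}y+\SP{y,z}x$.
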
  
We can now formulate the main theorem of this section:
\begin{theorem}
    The triple $(\MC^\bullet(\Eps),[\cdot,\cdot],\wedge)$ is a graded Poisson
    algebra of degree $-2$.
\end{theorem}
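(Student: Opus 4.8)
Propositions~\ref{proposition:CRN-Bracket} and \ref{proposition:WedgeProduct} already supply a graded Lie bracket $[\cdot,\cdot]$ of degree $-2$ and an associative, graded commutative product $\wedge$ of degree $0$ on $\MC^\bullet(\Eps)$, so the only point left to establish is the graded Leibniz rule
\[
[\C_1,\C_2\wedge\C_3] = [\C_1,\C_2]\wedge\C_3 + (-1)^{rs}\,\C_2\wedge[\C_1,\C_3]
\]
for $\C_1\in\MC^r(\Eps)$, $\C_2\in\MC^s(\Eps)$, $\C_3\in\MC^t(\Eps)$, the sign being the one forced by the degree $-2$ grading (and consistent with the sign $(-1)^{rs}$ in the graded Jacobi identity of Proposition~\ref{proposition:CRN-Bracket}). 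The plan is to prove this exactly in the spirit of the two preceding proofs: introduce the Leibniz defect
\[
L(\C_1,\C_2,\C_3) := [\C_1,\C_2\wedge\C_3] - [\C_1,\C_2]\wedge\C_3 - (-1)^{rs}\,\C_2\wedge[\C_1,\C_3] \;\in\; \MC^{r+s+t-2}(\Eps)
\]
and show $L\equiv 0$ by induction on $N=r+s+t$.

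For the inductive step ($N\geq 4$, so that $L(\C_1,\C_2,\C_3)$ lies in $\MC^{\geq 2}(\Eps)$ and is therefore determined by its contractions in the first slot) I would simply expand
\[
i_x L(\C_1,\C_2,\C_3) = [[\C_1,\C_2\wedge\C_3],x] - [[\C_1,\C_2]\wedge\C_3,x] - (-1)^{rs}[\C_2\wedge[\C_1,\C_3],x]
\]
using only the two recursions \eqref{eq:CRN_recursion} and \eqref{eq:wedge-recursion-rule}. Each of the three terms breaks into pieces in which exactly one of $\C_1,\C_2,\C_3$ has been replaced by its contraction $[\C_i,x]$, lowering the total degree by one, and collecting them yields the clean identity
\[
i_x L(\C_1,\C_2,\C_3) = (-1)^{s+t}\,L([\C_1,x],\C_2,\C_3) + (-1)^{t}\,L(\C_1,[\C_2,x],\C_3) + L(\C_1,\C_2,[\C_3,x]).
\]
By the induction hypothesis all three terms on the right vanish, hence $i_x L=0$ for every $x\in\Eps$, hence $L=0$. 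In contrast to the Jacobi and associativity proofs, this step needs neither the graded Jacobi identity nor the associativity of $\wedge$: it is a pure sign chase, once one keeps track of the degree shift $-2$ carried by each bracket and the degree shift $-1$ carried by each contraction $i_x=[\cdot,x]$, which itself acts as a graded derivation of $\wedge$ via \eqref{eq:wedge-recursion-rule}.

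It then remains to settle the finitely many base cases $N\leq 3$, where $L$ has degree $\leq 1$ and so cannot be tested by contraction; these have to be checked by hand. They split into: the configurations in which some $\C_i$ is a scalar, which reduce to the facts that $\sigma_{\C}$ and $\dif_{\C}$ are derivations of $\A$ (together with $\dif_{\C_1\wedge\C_2}a=[\C_1,a]\wedge\C_2+\C_1\wedge[\C_2,a]$ from the proof of Proposition~\ref{proposition:WedgeProduct}); the configurations involving an element of $\MC^2(\Eps)$ together with vectors, which reduce after a short computation to the symbol axiom \eqref{multi.courant.der} and the Leibniz rule \eqref{derivation-rule}; and the purely vectorial case $\C_1,\C_2,\C_3\in\Eps$, where one uses $i_z(x\wedge y)=\SP{y,z}x-\SP{x,z}y$ (the symbol of $x\wedge y$ vanishes, as noted in the proof of Proposition~\ref{proposition:WedgeProduct}) and the graded commutativity of $\wedge$. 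None of this is difficult, but one must be careful that all the degenerate instances — in particular those in which a contraction $[\C_i,x]$ itself lands in $\A$ — typecheck correctly, so that the induction hypothesis really applies to them. This book-keeping of signs and low-degree cases is the only real work; conceptually the argument is just a third run of the induction scheme already used twice in this section.
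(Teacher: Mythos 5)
Your proposal is correct and follows essentially the same route as the paper's own proof: induction over the total degree $N=r+s+t$, explicit verification of the low-degree cases, and an inductive step driven by the two recursion rules \eqref{eq:CRN_recursion} and \eqref{eq:wedge-recursion-rule}, concluding from $i_xL=0$ for all $x$ that the Leibniz defect vanishes. The only difference is presentational: you spell out the contraction identity for the defect $L$ that the paper leaves implicit in its closing sentence, and you check $N=3$ by hand where the paper could instead invoke non-degeneracy of $\SP{\cdot,\cdot}$ for degree-one elements.
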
 
\begin{proof} 
    It remains to prove the Leibniz rule
    \begin{equation}\label{eq:courant-leibniz}
        [\C_1,\C_2\wedge\C_3] = [\C_1,\C_2]\wedge\C_3 +
        (-1)^{rs} \C_2\wedge[\C_1,\C_3]
    \end{equation}
    for $\C_1 \in \MC^r(\Eps)$, $\C_2 \in \MC^s(\Eps)$ and $\C_3 \in
    \MC^t(\Eps)$. We will do this by induction over $N = r+s+t$. For
    $N=0,1$ there is nothing to show. For $N=2$ we have the following
    identities, where $a,b\in \A$, $x,y\in \Eps$ and $\mathsf{D}\in
    \MC^2(\Eps)$:
    \begin{align*}
    [\mathsf{D},ab] 
    &= \sigma_{\mathsf{D}}(ab) 
    = \sigma_{\mathsf{D}}(a) b + a \sigma_{\mathsf{D}}(b) =
    [\mathsf{D},a]b + a [\mathsf{D},b]
    \\[2mm]
    [x,ay] &= \SP{x,ay} = a\SP{x,y} = a [x,y] + [x,a] y
    \\[2mm]
    [a,x\wedge y] &= 0 = [a,x] y + x [a,y]
    \\[2mm]
    [a,b D] &= - \sigma_{b\mathsf{D}} a = -b \sigma_{\mathsf{D}} a =
    [a,b] \mathsf{D} + b [a,\mathsf{D}]
    \end{align*}
    We can now finish the proof by induction using the
    Equations~(\ref{eq:CRN_recursion}) and
    (\ref{eq:wedge-recursion-rule}).
\end{proof}
\begin{remark}[Courant bracket as derived bracket]
    \label{remark:DerivedBracket}
    From \eqref{eq:CourantBracketCondition} one obtains that $m \in
    \MC^3(\Eps)$ defines a Courant algebroid structure on $\Eps$ if and only
    if $[m , m] = 0$. In this case, the Courant bracket corresponding to $m$
    is the derived bracket
    \begin{equation}
        \label{eq:CourantDerivedBracket}
        [x, y]_{m} = [[x,m],y]\quad \text{for all } x,y\in \Eps
    \end{equation}
    in the  sense of \cite{kosmann-schwarzbach:2004b}.
\end{remark}
\begin{remark}[Deformation theory, I]
    \label{remark:DeformationTheoryI}
    Let $m \in \MC^3(\Eps)$ be a Courant algebroid structure,
    i.e. $[m, m] = 0$, and hence $\delta_m = [m,\cdot]$ squares to
    zero. We hence get a cochain complex
    \begin{equation}
        \label{eq:TheComplex}
        \xymatrix{ \A \ar[r]^-{\delta_m} & \Eps
          \ar[r]^-{\delta_m} & \MC^2(\Eps) \ar[r]^-{\delta_m} &  \MC^3(\Eps)
          \ar[r]^-{\delta_m} &  \MC^4(\Eps)
          \ar[r]^-{\delta_m} &  }
        \cdots.
    \end{equation}
    Denote by $H^\bullet(\MC(\Eps),\delta_m)$ the cohomology of this
    complex. Since $\delta_m$ is an (even inner) derivation, the
    cohomology inherits the Poisson algebra structure of $\MC(\Eps)$.
    By the usual considerations one finds that
    $H^2(\MC(\Eps),\delta_m)$ are the outer derivations of $m$, that
    $H^3(\MC(\Eps),\delta_m)$ parametrizes the non-trivial
    infinitesimal deformations $m_t = m + tm_1 + \cdots$ of $m$ up to
    formal automorphisms, and that $H^4(\MC(\Eps),\delta_m)$ contains
    the obstructions for a recursive construction of formal
    deformations. Again, the construction enjoys good functorial
    properties. We do not spell out the rather obvious details here.
\end{remark}

%
% The Rothstein Algebra $\mathcal{R}^\bullet$
%

\section{The Rothstein Algebra $\mathcal{R}^\bullet(\mathcal{E})$}
\label{TheRothsteinAlgebraR}

In this section we shall now describe a completely different approach
to the complex $\mathcal{C}^\bullet(\Eps)$ by establishing a kind of
``symbol calculus'' for it. To this end, we have to choose an
additional structure, a connection, to construct the \emph{Rothstein
  algebra}.
\begin{definition}[Connection]
    \label{definition:Connection}
    A connection (or: covariant derivative) $\nabla$ for the module
    $\mathcal{E}$ is a map $\nabla: \Der(\mathcal{A}) \times
    \mathcal{E} \longrightarrow \mathcal{E}$ such that
    \begin{equation}
        \label{eq:NablaAlinearFirstArg}
        \nabla_{aD} x = a \nabla_D x
    \end{equation}
    \begin{equation}
        \label{eq:NablaLeibnizSecondArgument}
        \nabla_D(ax) = a \nabla_D x + D(a) x
    \end{equation}
    for all $a \in \mathcal{A}$, $D \in \Der(\mathcal{A})$, and $x \in
    \mathcal{E}$. If $\SP{\cdot, \cdot}: \mathcal{E} \times
    \mathcal{E} \longrightarrow \mathcal{A}$ is an
    $\mathcal{A}$-valued inner product, then $\nabla$ is called metric
    if in addition
    \begin{equation}
        \label{eq:NablaMetric}
        D \SP{x, y} = \SP{\nabla_D x, y} + \SP{x, \nabla_D y}
    \end{equation}
    for all $x, y \in \mathcal{E}$ and $D \in \Der(\mathcal{A})$.
\end{definition}
The following lemma is well-known and provides us a metric connection
for the module $\mathcal{E}$:
\begin{lemma}
    \label{lemma:LeviCivita}
    If $\mathcal{E}$ is finitely generated and projective then it
    allows for a connection $\nabla$. If $\mathcal{E}$ has in addition
    a strongly non-degenerate inner product $\SP{\cdot, \cdot}$, then
    $\nabla$ can be chosen to be a metric connection.
\end{lemma}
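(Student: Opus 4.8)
The plan is to establish the two assertions separately, first proving the existence of an arbitrary connection and then upgrading it to a metric one by an averaging-type argument adapted to the algebraic setting.

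For the first assertion, I would use that $\mathcal{E}$ is finitely generated and projective. Choose a finite generating set and a dual set, i.e. a module epimorphism $p\colon \mathcal{A}^n \longrightarrow \mathcal{E}$ together with a splitting $\iota\colon \mathcal{E} \longrightarrow \mathcal{A}^n$, so that $p\circ\iota = \id_{\mathcal{E}}$. On the free module $\mathcal{A}^n$ there is the obvious componentwise connection $\nabla^0_D(a_1,\dots,a_n) = (D(a_1),\dots,D(a_n))$, which trivially satisfies \eqref{eq:NablaAlinearFirstArg} and \eqref{eq:NablaLeibnizSecondArgument}. Then I would set $\nabla_D x := p\bigl(\nabla^0_D(\iota(x))\bigr)$ and check directly that the two defining properties are inherited: $\mathcal{A}$-linearity in $D$ is immediate since $p$, $\iota$ are $\mathcal{A}$-linear and $\nabla^0$ is $\mathcal{A}$-linear in $D$; the Leibniz rule follows because $\iota(ax) = a\iota(x)$, $\nabla^0_D(a\iota(x)) = a\nabla^0_D(\iota(x)) + D(a)\iota(x)$, and applying $p$ gives $\nabla_D(ax) = a\nabla_D x + D(a)x$. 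This part is entirely routine.

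For the second assertion I would start from any connection $\nabla$ as just constructed and correct it. The key observation is that the defect of metricity, $(D,x,y) \mapsto D\SP{x,y} - \SP{\nabla_D x, y} - \SP{x,\nabla_D y}$, is $\mathcal{A}$-linear in $D$ and a symmetric $\mathcal{A}$-bilinear form in $(x,y)$ with values in $\mathcal{A}$; hence, using that $\SP{\cdot,\cdot}$ is strongly nondegenerate so that $\mathcal{E}\cong\mathcal{E}' = \Hom_{\mathcal{A}}(\mathcal{E},\mathcal{A})$, this defect is represented by an $\mathcal{A}$-linear map $\Gamma\colon\Der(\mathcal{A}) \longrightarrow \Hom_{\mathcal{A}}(\mathcal{E},\mathcal{E})$ whose values are symmetric with respect to $\SP{\cdot,\cdot}$. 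Concretely, I would define $\widetilde{\nabla}_D x = \nabla_D x + \tfrac12\Gamma(D)x$ and verify that $\widetilde{\nabla}$ is again a connection (the added term is $\mathcal{A}$-linear in $x$, so the Leibniz rule is unaffected, and it is $\mathcal{A}$-linear in $D$) and that it is now metric: the factor $\tfrac12$ — available since $\mathbb{Q}\subseteq\ring{R}$ — exactly cancels the defect because $\SP{\Gamma(D)x,y} + \SP{x,\Gamma(D)y} = 2\SP{\Gamma(D)x,y}$ by symmetry of $\Gamma(D)$, matching the original defect $D\SP{x,y} - \SP{\nabla_D x,y} - \SP{x,\nabla_D y}$.

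The main obstacle, such as it is, is the bookkeeping in the second step: one must check carefully that the metricity defect really is $\mathcal{A}$-linear in all three slots (the $D$-slot and both $\mathcal{E}$-slots), since only then does it define an element of $\Der(\mathcal{A})' \otimes \operatorname{Sym}^2_{\mathcal{A}}\mathcal{E}'$ — or more precisely a map $\Der(\mathcal{A}) \to \Hom_{\mathcal{A}}(\mathcal{E},\mathcal{E})$ — that can be "divided by two" and reabsorbed. The $\mathcal{A}$-linearity in the $\mathcal{E}$-slots is where the Leibniz terms $D(a)$ from both $\nabla$ and the derivation property of $D$ must be seen to cancel against each other; this is the one computation worth writing out. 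Everything else is formal, and the strong nondegeneracy together with $\mathbb{Q}\subseteq\ring{R}$ are precisely the hypotheses that make the correction term well-defined.
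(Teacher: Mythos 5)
Your proposal is correct and follows essentially the same route as the paper: the first assertion is handled by the standard Grassmann connection coming from a presentation $\mathcal{E}\cong P\mathcal{A}^n$ (your $p\circ\nabla^0_D\circ\iota$ is exactly $PD(\cdot)$), and your metric correction $\nabla_D x+\tfrac12\Gamma(D)x$ unwinds to precisely the paper's formula $\SP{\nabla_D x,y}=\tfrac12\left(\SP{\tilde{\nabla}_D x,y}-\SP{x,\tilde{\nabla}_D y}+D\SP{x,y}\right)$. The only difference is presentational, in that you isolate the metricity defect as a symmetric $\mathcal{A}$-linear tensor before reabsorbing it, whereas the paper writes down the corrected connection directly.
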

\begin{proof}
    If $\mathcal{E} = P\mathcal{A}^n$ with $P = P^2 \in
    M_n(\mathcal{A})$ then $\nabla_D Px = P D(x)$ is a connection
    where $D$ is applied componentwise to $x \in \mathcal{A}^n$.
    Moreover, if $\tilde{\nabla}$ is any connection and $\SP{\cdot,
      \cdot}$ is strongly non-degenerate then $\nabla$ defined by
    \[
    \SP{\nabla_D x, y}
    = \frac{1}{2}
    \left(\SP{\tilde{\nabla}_D x, y} 
        - \SP{x, \tilde{\nabla}_D y} + D\SP{x, y}
    \right)
    \]
    is easily shown to be a metric connection. Note that fullness is
    not needed here.
\end{proof}

Endow the algebra of symmetric multi-derivations
$\SDer^\bullet(\mathcal{A})$ with the obvious symmetric product $\vee$
given for $P \in \SDer^p(\mathcal{A})$ and $Q\in \SDer^q(\mathcal{A})$
by
\[
(P\vee Q)(a_1,\ldots,a_{p+q}) = \sum_{\pi \in \Sym_{p,q}}
P(a_{\pi(1)},\ldots,a_{\pi(p)})Q(a_{\pi(p+1)},\ldots,a_{\pi(p+q)})
\]
for all $a_1\ldots a_{p+q} \in \A$, where $\Sym_{p,q}$ denotes the set
of $(p,q)$-shuffles. This makes $\SDer^\bullet(\A)$ to an associative,
commutative algebra.
\begin{definition}\label{def:symmetric_product_of_Der}
   Denote by $\Sym_{\mathcal{A}}^\bullet \Der(\mathcal{A})$  the subalgebra of
   $\SDer^\bullet(\A)$  generated by $\A$ and $\Der(\A)$.
\end{definition}

Note that in general $\Sym_{\mathcal{A}}^\bullet \Der(\mathcal{A})$ 
is a proper subset of $\SDer^\bullet(\mathcal{A})$. Consider for
example the algebra $\A = \mathbb{R}[X]/(X^2)$, then $\Der(\A)\vee_\A
\Der(\A) = 0 \neq \SDer^2(\A)$.  In nice geometric contexts however, the
difference is absent:
\begin{example}
    \label{example:SymDerSDer}
    For a smooth manifold $M$ the symmetric multi-derivations
    $\SDer^k(C^\infty(M))$ of the smooth functions $\mathcal{A} =
    C^\infty(M)$ can be identified canonically with the smooth
    sections $\Gamma^\infty(\Sym^k TM)$ of symmetric powers of the
    tangent bundle. Moreover, by use of the Serre-Swan-Theorem one
    obtains that the $k$-th symmetric power of $\Der(C^\infty(M))
    \cong \Gamma^\infty(TM)$ is indeed in bijection to
    $\Gamma^\infty(S^k TM)$.
\end{example}
We can now define the Rothstein algebra as associative algebra as
follows. Note that as usual $\Sym_{\mathcal{A}}^0\Der(\mathcal{A}) =
\Anti_{\mathcal{A}}^0 \mathcal{E} = \mathcal{A}$ by convention.
\begin{definition}[Rothstein algebra]
    \label{definition:RothsteinAlgebra}
    The Rothstein algebra is defined by
    \begin{equation}
        \label{eq:RothsteinAlgebra}
        \Rothstein^\bullet(\mathcal{E}) 
        =
        \bigoplus_{r=0}^\infty \Rothstein^r (\mathcal{E})
        \quad
        \textrm{with}
        \quad
        \Rothstein^r(\mathcal{E}) = \bigoplus_{2p + k = r}
        \Sym_{\mathcal{A}}^p\Der(\mathcal{A}) \otimes_\mathcal{A}
        \Anti_\mathcal{A}^k \mathcal{E}, 
    \end{equation}
    where the tensor product is taken over $\mathcal{A}$, with the
    canonical product $\wedge$ defined on factorizing elements by
    \begin{equation}
        \label{eq:RothsteinProduct}
        (P \otimes \xi) \wedge (Q \otimes \eta)
        = (P \vee Q) \otimes (\xi \wedge \eta).
    \end{equation}
\end{definition}
With this definition, the following properties are immediate. Note
that the associative algebra structure of
$\Rothstein^\bullet(\mathcal{E})$ does not yet depend on the inner
product.
\begin{proposition}
    \label{proposition:RothsteinAlgebra}
    The Rothstein algebra $\Rothstein^\bullet(\mathcal{E})$ with the
    product \eqref{eq:RothsteinProduct} is an associative and graded
    commutative algebra with $\Rothstein^0(\mathcal{E}) = \mathcal{A}$
    as sub-algebra. Moreover, $\Rothstein^0(\mathcal{E})$,
    $\Rothstein^1(\mathcal{E})$  and
    $\Rothstein^2(\mathcal{E})$ generate
    $\Rothstein^\bullet(\mathcal{E})$.
\end{proposition}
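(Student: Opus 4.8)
The plan is to verify each assertion of Proposition~\ref{proposition:RothsteinAlgebra} in turn, since all of them are ``structural'' consequences of Definition~\ref{definition:RothsteinAlgebra} together with standard facts about symmetric and exterior algebras of projective modules. First I would treat associativity: on factorizing elements $P \otimes \xi$, $Q \otimes \eta$, $S \otimes \zeta$ we simply compute that both $((P\otimes\xi)\wedge(Q\otimes\eta))\wedge(S\otimes\zeta)$ and $(P\otimes\xi)\wedge((Q\otimes\eta)\wedge(S\otimes\zeta))$ equal $(P\vee Q\vee S)\otimes(\xi\wedge\eta\wedge\zeta)$, using associativity of $\vee$ in $\Sym_{\mathcal{A}}^\bullet\Der(\mathcal{A})$ and of $\wedge$ in $\Anti_{\mathcal{A}}^\bullet\mathcal{E}$; one then remarks that $\wedge$ is well defined because $\vee$ and $\wedge$ are $\mathcal{A}$-bilinear, so the formula passes to the tensor product over $\mathcal{A}$, and extends by $\ring{R}$-bilinearity to all of $\Rothstein^\bullet(\mathcal{E})$. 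Graded commutativity is the same computation: $(Q\otimes\eta)\wedge(P\otimes\xi) = (Q\vee P)\otimes(\eta\wedge\xi) = (P\vee Q)\otimes\big((-1)^{kl}\xi\wedge\eta\big)$ if $\xi\in\Anti^k$, $\eta\in\Anti^l$; since an element of $\Sym_{\mathcal{A}}^p\Der(\mathcal{A})\otimes_{\mathcal{A}}\Anti_{\mathcal{A}}^k\mathcal{E}$ has total degree $2p+k$ and the even part $2p$ contributes trivially to the sign, the Koszul sign $(-1)^{(2p+k)(2q+l)} = (-1)^{kl}$ matches, so $\wedge$ is graded commutative for the grading \eqref{eq:RothsteinAlgebra}.

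Next I would record that $\Rothstein^0(\mathcal{E}) = \Sym_{\mathcal{A}}^0\Der(\mathcal{A})\otimes_{\mathcal{A}}\Anti_{\mathcal{A}}^0\mathcal{E} = \mathcal{A}\otimes_{\mathcal{A}}\mathcal{A} \cong \mathcal{A}$, and that under this identification the product \eqref{eq:RothsteinProduct} restricts to multiplication in $\mathcal{A}$, so $\mathcal{A}$ sits inside $\Rothstein^\bullet(\mathcal{E})$ as a unital subalgebra (the unit $1\otimes 1$ being the global unit). The only genuinely non-formal point is the generation statement: I want to show that $\Rothstein^0\oplus\Rothstein^1\oplus\Rothstein^2$ generates $\Rothstein^\bullet(\mathcal{E})$ as an $\ring{R}$-algebra under $\wedge$. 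Here I would argue degree by degree. A homogeneous element of $\Rothstein^r(\mathcal{E})$ is a finite sum of factorizing elements $P\otimes\xi$ with $P\in\Sym_{\mathcal{A}}^p\Der(\mathcal{A})$, $\xi\in\Anti_{\mathcal{A}}^k\mathcal{E}$ and $2p+k=r$. Because $\Sym_{\mathcal{A}}^p\Der(\mathcal{A})$ is by definition the $\mathcal{A}$-linear span of products $D_1\vee\cdots\vee D_p$ with $D_i\in\Der(\mathcal{A})$ (unlike $\SDer$, the symmetric tensor power is generated in degree one), and $\Anti_{\mathcal{A}}^k\mathcal{E}$ is the $\mathcal{A}$-linear span of $x_1\wedge\cdots\wedge x_k$ with $x_j\in\mathcal{E}$, it follows that $P\otimes\xi$ is an $\mathcal{A}$-linear combination of elements of the form $(D_1\otimes 1)\wedge\cdots\wedge(D_p\otimes 1)\wedge(1\otimes x_1)\wedge\cdots\wedge(1\otimes x_k)$. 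Each $D_i\otimes 1$ lies in $\Sym_{\mathcal{A}}^1\Der(\mathcal{A})\subseteq\Rothstein^2(\mathcal{E})$ and each $1\otimes x_j$ lies in $\Anti_{\mathcal{A}}^1\mathcal{E}\subseteq\Rothstein^1(\mathcal{E})$; finally the scalar from $\mathcal{A}=\Rothstein^0(\mathcal{E})$ can be absorbed into the wedge since $a\wedge(P\otimes\xi) = aP\otimes\xi$. Hence every homogeneous element, and so every element, of $\Rothstein^\bullet(\mathcal{E})$ is a polynomial in elements of $\Rothstein^0$, $\Rothstein^1$, $\Rothstein^2$.

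I expect the main (very mild) obstacle to be purely bookkeeping: making sure the $\mathcal{A}$-module structures on the two tensor factors are used consistently so that $\wedge$ is genuinely well defined on the tensor product over $\mathcal{A}$ rather than merely over $\ring{R}$, and keeping track of Koszul signs for the total grading $2p+k$ rather than the internal exterior degree $k$ alone. Neither of these is deep; both amount to the observation that the ``symmetric part'' of the grading is even and therefore sign-neutral. No use of the inner product, fullness, or non-degeneracy is needed anywhere in this proposition, which is exactly the remark the authors make before its statement; those hypotheses only enter later when the Poisson bracket on $\Rothstein^\bullet(\mathcal{E})$ is built from a metric connection.
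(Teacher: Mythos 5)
Your proof is correct and matches the paper's (implicit) reasoning: the paper gives no proof at all, declaring these properties ``immediate'' consequences of Definition~\ref{definition:RothsteinAlgebra}, and your write-up is a correct and complete expansion of exactly those routine verifications --- associativity and graded commutativity inherited from $\vee$ and $\wedge$ with the even symmetric degree being sign-neutral, well-definedness over $\otimes_{\mathcal{A}}$, and generation from the fact that $\Sym_{\mathcal{A}}^\bullet\Der(\mathcal{A})$ and $\Anti_{\mathcal{A}}^\bullet\mathcal{E}$ are each generated in degree one. Your closing remark that neither the inner product nor fullness is used here is also exactly the point the authors flag just before the statement.
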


Using a metric connection we can define a graded Poisson bracket of
degree $-2$ on the Rothstein algebra.  To this end we have to
introduce the curvature of $\nabla$. First it is clear that a given
connection $\nabla$ for $\mathcal{E}$ extends to
$\Anti^\bullet_{\mathcal{A}} \mathcal{E}$ by imposing the Leibniz rule
with respect to the $\wedge$-product. Thus we can consider
\begin{equation}
    \label{eq:CurvatureDef}
    R(D, E) \xi 
    = \nabla_D \nabla_E \xi
    - \nabla_E \nabla_D \xi
    - \nabla_{[D, E]} \xi,
\end{equation}
for $D, E \in \Der(\mathcal{A})$ and $\xi \in
\Anti^\bullet_{\mathcal{A}}(\mathcal{E})$. The usual computation shows
that $R(\cdot, \cdot) \cdot$ is $\mathcal{A}$-linear in all three
arguments. Thus it defines an element
\begin{equation}
    \label{eq:CurvatureTensor}
    R(D, E) \in \End_{\mathcal{A}} (\Anti^\bullet_{\mathcal{A}} \mathcal{E})
\end{equation}
in the $\mathcal{A}$-linear endomorphisms of
$\Anti^\bullet_{\mathcal{A}} \mathcal{E}$. Moreover, it clearly
preserves the anti-symmetric degree of $\Anti^\bullet_{\mathcal{A}}
\mathcal{E}$ whence it is homogeneous of degree $0$.  Finally, $R(D,
E)$ is a derivation of the $\wedge$-product as the commutator of
derivations is a derivation.  Restricting $R(D, E)$ to $\mathcal{E}$
gives a $\mathcal{A}$-linear map $R(D, E): \mathcal{E} \longrightarrow
\mathcal{E}$. Since $\nabla$ is metric, it follows that
\begin{equation}
    \label{eq:Rmetric}
    \SP{R(D, E)x, y} = - \SP{R(D, E)y, x},
\end{equation}
whence the map $(x, y) \mapsto \SP{R(D, E)x, y}$ is
$\mathcal{A}$-bilinear and anti-symmetric.  Using the strongly
non-degenerate inner product $\SP{\cdot, \cdot}$ on $\mathcal{E}$ this
allows to define $r(D, E) \in \Anti_{\mathcal{A}}^2 \mathcal{E}$ by
\begin{equation}
    \label{eq:rDef}
    \SP{R(D, E)x, y} = \SP{r(D, E), x \wedge y}.
\end{equation}
Directly from the definition of the curvature we obtain the Bianchi
identity
\begin{equation} \label{eq:BianchiForR}
  \begin{split}
    &[\nabla_{D_1},R(D_2,D_3)] + [\nabla_{D_2},R(D_3,D_1)]+
    [\nabla_{D_3},R(D_1,D_2)]\\
    &+ R(D_1,[D_2,D_3]) + R(D_2,[D_3,D_1]) +
    R(D_3,[D_1,D_2]) = 0
  \end{split} 
\end{equation}
for $D_1,D_2,D_3 \in \Der(\mathcal{A})$,
which reads for $r$ as
\begin{equation}
  \begin{split}\label{eq:BianchiForr}
    &\nabla_{D_1} r(D_2,D_3) + \nabla_{D_2} r(D_3,D_1)+
    \nabla_{D_3}r(D_1,D_2)\\
    &+ r(D_1,[D_2,D_3]) + r(D_2,[D_3,D_1]) +
    r(D_3,[D_1,D_2]) = 0.
  \end{split}
\end{equation}

With this preparation the Poisson structure can now be defined
analogously to the smooth case, see \cite{roytenberg:2002b}.
\begin{theorem}
    \label{theorem:RothsteinPoissonBracket}
    Let $\nabla$ be a metric connection on $\mathcal{E}$. Then there
    exists a unique graded Poisson structure $\RothBracket{\cdot,
      \cdot}$ on $\Rothstein^\bullet(\mathcal{E})$ of degree $-2$ such
    that
    \begin{align}
        \label{eq:RothsteinBracketOnGenerators}
        \RothBracket{a, b} &= 0 = \RothBracket{a, x},\\
        \RothBracket{x, y} &= \SP{x, y},\\
        \RothBracket{D,a}  &= -D(a),\\
        \RothBracket{D, x} &= - \nabla_D x, \textrm{ and}\\
        \RothBracket{D, E} &= - [D, E] - r(D, E),
    \end{align}
    for $a, b \in \mathcal{A} = \Rothstein^0(\mathcal{E})$, $x, y \in
    \mathcal{E} =\Rothstein^1(\mathcal{E})$, and $D, E \in
    \Der(\mathcal{A}) \subseteq \Rothstein^2(\mathcal{E})$.
\end{theorem}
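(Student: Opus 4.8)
The plan is to construct $\RothBracket{\cdot,\cdot}$ explicitly on $\Rothstein^\bullet(\mathcal{E})$ and then verify the graded Poisson axioms. Since $\Rothstein^0$, $\Rothstein^1$, and $\Rothstein^2$ generate the algebra (Proposition~\ref{proposition:RothsteinAlgebra}), uniqueness is immediate: a graded biderivation of degree $-2$ is determined by its values on generators, and these are prescribed. For existence, I would first note that a metric connection $\nabla$ extends to $\Anti_\mathcal{A}^\bullet\mathcal{E}$ as a derivation, and that the prescribed bracket with $D\in\Der(\mathcal{A})$ should be the operator $\RothBracket{D,\cdot} = -\Lie_D$, a derivation of the $\wedge$-product of degree $0$, acting on $\mathcal{A}$ as $-D$, on $\mathcal{E}$ as $-\nabla_D$, and on $\Der(\mathcal{A})$ as $-[D,\cdot] - r(D,\cdot)$ (the latter being well-defined since $r(D,E)\in\Anti_\mathcal{A}^2\mathcal{E}$). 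One checks this is a well-defined $\mathcal{A}$-module derivation: the only subtlety is $\mathcal{A}$-linearity in the $D$-slot, $\RothBracket{aD,\cdot} \stackrel{?}{=} a\RothBracket{D,\cdot}$, which fails naively on $\mathcal{A}$ and $\mathcal{E}$ (since $-aD(b)\neq$ a derivation-compatible extension) — but this is the familiar phenomenon: $\RothBracket{D,\cdot}$ is $\mathcal{A}$-linear in $D$ only \emph{modulo} the lower-order correction coming from the symmetric degree, and the consistent way to phrase it is that $\Rothstein^2$-elements of the form $aD$ and $D$ itself are related inside the bracket precisely through the Leibniz rule we are trying to prove. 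The cleanest route is therefore to \emph{define} the full bracket by a closed formula (mimicking the Rothstein bracket of \cite{rothstein:1991a}): decompose a general element of $\Rothstein^\bullet$ via the connection into ``$\Sym\Der \otimes \Anti\mathcal{E}$'' pieces, and set
\[
    \RothBracket{P\otimes\xi, Q\otimes\eta}
    = (\text{pairing/contraction terms}) + (\text{curvature terms}),
\]
where the contraction terms use $\SP{\cdot,\cdot}$ on the $\mathcal{E}$-factors and the canonical $\Der$–$\mathcal{A}$ pairing, and the curvature terms insert $r(D,E)$ whenever two derivation-slots meet.

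Concretely, I would introduce the degree $+1$ ``exterior covariant derivative'' type operators and the degree $-1$ contraction operators associated to $\nabla$ and $\SP{\cdot,\cdot}$, and write
\[
    \RothBracket{\Phi,\Psi}
    = \sum (\iota \Phi)\cdot(\nabla\text{-insertion of }\Psi)
      \;\pm\; (\text{symmetric term})
      \;+\; (\text{terms with } r),
\]
so that the bracket is manifestly $\ring{R}$-bilinear, graded skew-symmetric of degree $-2$, and reduces to the stated values on generators. Graded skew-symmetry and the degree count are then bookkeeping. The Leibniz rule $\RothBracket{\Phi,\Psi\wedge\Theta} = \RothBracket{\Phi,\Psi}\wedge\Theta + (-1)^{|\Phi||\Psi|}\Psi\wedge\RothBracket{\Phi,\Theta}$ follows because each operator entering the formula (the contractions and the $\nabla$-insertions) is itself a graded derivation of $\wedge$; alternatively, once one knows $\RothBracket{\cdot,\cdot}$ agrees with a biderivation on generators and the three generating degrees, one extends by Leibniz and checks consistency by the same inductive scheme used in Proposition~\ref{proposition:CRN-Bracket} and Proposition~\ref{proposition:WedgeProduct}.

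The main work — and the main obstacle — is the graded Jacobi identity
\[
    \RothBracket{\Phi,\RothBracket{\Psi,\Theta}}
    = \RothBracket{\RothBracket{\Phi,\Psi},\Theta}
      + (-1)^{|\Phi||\Psi|}\RothBracket{\Psi,\RothBracket{\Phi,\Theta}}.
\]
By multiplicativity (the Leibniz rule just established) and graded skew-symmetry, the Jacobiator is a triderivation in each argument, so it suffices to verify it on the generators $a\in\mathcal{A}$, $x\in\mathcal{E}$, $D\in\Der(\mathcal{A})$. Most triples are trivial or reduce to the defining properties of $\nabla$ as a connection. The genuinely nontrivial cases are $(D,E,F)$ with all three in $\Der(\mathcal{A})$ and $(D,E,x)$ with $x\in\mathcal{E}$: the first produces, on one side, the Jacobi identity $[[D,E],F]+\text{cyclic}=0$ for vector fields together with the terms $\nabla_{D}r(E,F) + \text{cyclic} + r(D,[E,F]) + \text{cyclic}$, which vanish precisely by the Bianchi identity \eqref{eq:BianchiForr}; the second produces the definition of curvature \eqref{eq:CurvatureDef} rewritten via \eqref{eq:rDef} — one must check that the $r(D,E)$ term in $\RothBracket{D,E}$ is exactly what is needed so that $\RothBracket{\RothBracket{D,E},x}$ matches $R(D,E)x$ modulo the $\nabla_{[D,E]}x$ coming from the $[D,E]$ part, using metricity \eqref{eq:NablaMetric} and strong nondegeneracy to pass between $R(D,E)$ and $r(D,E)$. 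So the whole construction is engineered so that the connection axioms, metricity, the curvature identity \eqref{eq:CurvatureDef}/\eqref{eq:rDef}, and the Bianchi identity \eqref{eq:BianchiForr} are exactly the inputs that make Jacobi hold. I would organize the verification as an induction on total degree, exactly parallel to the proofs of Propositions~\ref{proposition:CRN-Bracket} and~\ref{proposition:WedgeProduct}, reducing at each step to the finitely many generator-triples above; the curvature/Bianchi bookkeeping in the $(D,E,F)$ and $(D,E,x)$ cases is where I expect essentially all the real content to sit.
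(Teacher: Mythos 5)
Your proposal is correct and follows essentially the same route as the paper: prescribe the bracket on the generators of degrees $0$, $1$, $2$ (uniqueness being immediate from Proposition~\ref{proposition:RothsteinAlgebra}), extend by graded antisymmetry and the Leibniz rule, and reduce the graded Jacobi identity to generator triples, where metricity handles the $(D,E,x)$-type cases and the Bianchi identity \eqref{eq:BianchiForr} handles $(D,E,F)$. The only substantive difference is that you are more explicit than the paper about the well-definedness of the Leibniz extension (the paper dismisses this as ``clearly consistent''); note, though, that the $\mathcal{A}$-linearity issue you flag does \emph{not} arise on $\mathcal{A}$ and $\mathcal{E}$, since there $\RothBracket{aD,b}=-aD(b)=a\RothBracket{D,b}$ and $\RothBracket{aD,x}=-\nabla_{aD}x=a\RothBracket{D,x}$ directly, while the Leibniz correction term $\RothBracket{a,\cdot}\wedge D$ vanishes; the correction only matters against $\Der(\mathcal{A})$, where it is exactly accounted for by $[aD,E]=a[D,E]-E(a)D$ and the tensoriality of $r$.
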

\begin{proof}
    Since the Rothstein algebra is generated by the elements of degree
    $0$, $1$ and $2$, it will be sufficient to specify the Poisson
    bracket on these elements, which immediately gives uniqueness. The
    required graded version of the Leibniz rule is $\RothBracket{\phi,
      \psi \wedge \chi} = \RothBracket{\phi, \psi} \wedge \chi +
    (-1)^{rs} \psi \wedge \RothBracket{\phi, \chi}$ for $\phi \in
    \Rothstein^r(\mathcal{E})$ and $\psi \in
    \Rothstein^s(\mathcal{E})$. Clearly, this Leibniz rule is
    consistent with the definitions
    (\ref{eq:RothsteinBracketOnGenerators}) whence enforcing graded
    antisymmetry $\RothBracket{\phi, \psi} = - (-1)^{rs}
    \RothBracket{\psi, \phi}$ and the Leibniz rule extends
    $\RothBracket{\cdot, \cdot}$ to all of
    $\Rothstein^\bullet(\mathcal{E})$. It remains to show the graded
    Jacobi identity, which reads
    \begin{equation}
        \label{eq:JacobiForRothstein}
        \RothBracket{\phi, \RothBracket{\psi, \chi}}
        =
        \RothBracket{\RothBracket{\phi, \psi}, \chi}
        +
        (-1)^{rs}
        \RothBracket{\psi, \RothBracket{\phi, \chi}}
    \end{equation}
    for general elements. But clearly \eqref{eq:JacobiForRothstein} is
    fulfilled on generators thanks to metricity of the connection and
    the Bianchi identity for the curvature $r$.
\end{proof}
The Rothstein bracket depends therefore on the connection. However,
the next theorem says that this dependence is not crucial. To this
end, let $\nabla$ and $\nabla'$ be metric connections on $\Eps$.
Define for $D \in \Der(\mathcal{A})$ the module endomorphism $T_D \in
\End_\mathcal{A}(\Eps)$ by
\begin{equation}
    \label{eq:DifferenceConnection}
    T_D x = \nabla_D x - \nabla_D' x
\end{equation}
for all $x \in \Eps$. Then $\SP{T_D x,y} = -\SP{T_D y,x}$ for all
$x,y\in \Eps$, hence we get a well-defined $\mathcal{A}$-linear map
$t: \Der(\mathcal{A})\longrightarrow \Anti_{\mathcal{A}}^2\Eps$ by the
requirement $\SP{t(D),x\wedge y} = \SP{T_D x,y}$.  We extend this map
to the whole algebra $\Rothstein^\bullet(\Eps)$ by $t(a) = 0$, $t(x) =
0$ for $a\in \mathcal{A}$, $x\in \mathcal{E}$, and by enforcing the
Leibniz rule with respect to $\wedge$.
\begin{theorem}
    \label{theorem:ChangingTheConnection}
    Let $\nabla$ and $\nabla'$ be metric connections on $\Eps$, and
    let $\RothBracket{\cdot,\cdot}$ and $\RothBracket{\cdot,\cdot}'$
    be the associated Rothstein brackets. Let $t \in \End_{\ring{R}}
    (\Rothstein^\bullet(\Eps))$ be given as above. Then
    \begin{equation}
        \exp(t) = \sum_{n=0}^\infty \frac{t^n}{n!}:
        (\Rothstein^\bullet(\Eps),\RothBracket{\cdot,\cdot}) \longrightarrow
        (\Rothstein^\bullet(\Eps),\RothBracket{\cdot,\cdot}') 
    \end{equation}
    is an homogeneous isomorphism of degree zero of graded Poisson
    algebras.
\end{theorem}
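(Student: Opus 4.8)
The plan is to exploit that $t$ is a homogeneous derivation of degree zero for the product $\wedge$ which is moreover locally nilpotent, so that $\exp(t)$ is automatically a well-defined automorphism of the graded associative algebra $(\Rothstein^\bullet(\Eps),\wedge)$, and then to reduce the compatibility with the two Poisson brackets to a finite check on generators. First I would record the structure of $t$: since $t(D)\in\Anti_\mathcal{A}^2\Eps$ for $D\in\Der(\mathcal{A})$ and $t$ is a $\wedge$-derivation vanishing on $\mathcal{A}$ and on $\Eps$, it maps $\Sym_\mathcal{A}^p\Der(\mathcal{A})\otimes_\mathcal{A}\Anti_\mathcal{A}^k\Eps$ into $\Sym_\mathcal{A}^{p-1}\Der(\mathcal{A})\otimes_\mathcal{A}\Anti_\mathcal{A}^{k+2}\Eps$; in particular $t$ vanishes identically on $\Anti_\mathcal{A}^\bullet\Eps$ and is nilpotent on each $\Rothstein^r(\Eps)$. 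Consequently $\exp(t)$ is well defined, homogeneous of degree zero, an algebra automorphism (the exponential of an even derivation) with inverse $\exp(-t)$, it fixes $\mathcal{A}$, $\Eps$ and $\Anti_\mathcal{A}^\bullet\Eps$ pointwise, and $\exp(t)(D)=D+t(D)$ for $D\in\Der(\mathcal{A})$ because $t^2(D)=0$. This already settles the claim for $\wedge$, and bijectivity.

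To prove compatibility with the brackets I would set $B(\phi,\psi)=\exp(-t)\RothBracket{\exp(t)\phi,\exp(t)\psi}'$; because $\exp(\pm t)$ are algebra automorphisms and $\RothBracket{\cdot,\cdot}'$ is a graded Poisson bracket of degree $-2$, the map $B$ is $\ring{R}$-bilinear, of degree $-2$, and a biderivation for $\wedge$ with the same Leibniz signs as $\RothBracket{\cdot,\cdot}$, and the theorem becomes the identity $B=\RothBracket{\cdot,\cdot}$. By Proposition~\ref{proposition:RothsteinAlgebra} the algebra $\Rothstein^\bullet(\Eps)$ is generated in degrees $\le 2$, hence (as $\Anti_\mathcal{A}^2\Eps\subseteq\Eps\wedge\Eps$) by $\mathcal{A}\cup\Eps\cup\Der(\mathcal{A})$, and two biderivations agreeing on a generating set agree everywhere; so it suffices to verify $B=\RothBracket{\cdot,\cdot}$ on pairs drawn from these generators. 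For pairs with at most one $\Der(\mathcal{A})$-entry this is immediate: $\exp(t)$ fixes the $\mathcal{A}$- and $\Eps$-entries, the extra term $\RothBracket{\,\cdot\,,t(D)}'$ annihilates $\mathcal{A}$ by the Leibniz rule, and the resulting brackets land in $\mathcal{A}$ or $\Eps$ where $\exp(-t)=\id$. The one computation needed here is $\RothBracket{t(D),x}'$, which I would get from $\RothBracket{\omega,x}'\in\Eps$ with $\SP{\RothBracket{\omega,x}',y}=-\SP{\omega,x\wedge y}$ for $\omega\in\Anti_\mathcal{A}^2\Eps$ (checked on decomposable $\omega$), the defining relation $\SP{T_D x,y}=\SP{t(D),x\wedge y}$ with $T_D=\nabla_D-\nabla'_D$, and strong non-degeneracy: this gives $\RothBracket{t(D),x}'=-T_D x$, whence $B(D,x)=\exp(-t)(-\nabla'_D x-\nabla_D x+\nabla'_D x)=-\nabla_D x=\RothBracket{D,x}$.

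The case $(D,E)$ is the substantial one. Expanding $B(D,E)=\exp(-t)\RothBracket{D+t(D),E+t(E)}'$ and using $\RothBracket{D,E}'=-[D,E]-r'(D,E)$, the identity $\RothBracket{D,\xi}'=-\nabla'_D\xi$ on $\Anti_\mathcal{A}^\bullet\Eps$, graded antisymmetry, $\exp(-t)|_{\Anti_\mathcal{A}^\bullet\Eps}=\id$, and $\exp(-t)(-[D,E])=-[D,E]+t([D,E])$, I arrive at
\[
B(D,E)=-[D,E]+t([D,E])-r'(D,E)-\nabla'_D(t(E))+\nabla'_E(t(D))+\RothBracket{t(D),t(E)}'.
\]
It then remains to compute $r(D,E)-r'(D,E)$. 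Writing $\nabla=\nabla'+T$ and $A_{D,E}x=\nabla'_D(T_E x)-T_E(\nabla'_D x)$, a direct computation gives the classical change-of-connection identity $R(D,E)=R'(D,E)+A_{D,E}-A_{E,D}+[T_D,T_E]-T_{[D,E]}$. Under the metric raising $S\leftrightarrow s$ defined by $\SP{Sx,y}=\SP{s,x\wedge y}$, which is injective by strong non-degeneracy, I would check — using metricity of $\nabla'$ — that $A_{D,E}$ is represented by $\nabla'_D(t(E))$, and — using the graded Jacobi identity for $\RothBracket{\cdot,\cdot}'$ together with $\RothBracket{s,x}'=-Sx$ — that $[T_D,T_E]$ is represented by $-\RothBracket{t(D),t(E)}'$, while $T_{[D,E]}$ is represented by $t([D,E])$ and $R(D,E),R'(D,E)$ by $r(D,E),r'(D,E)$ by definition. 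Hence
\[
r(D,E)-r'(D,E)=\nabla'_D(t(E))-\nabla'_E(t(D))-\RothBracket{t(D),t(E)}'-t([D,E]),
\]
and substituting this into the displayed expression for $B(D,E)$ makes all terms cancel except $-[D,E]-r(D,E)=\RothBracket{D,E}$.

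The hard part will be the case $(D,E)$, and inside it the precise sign bookkeeping in the dictionary between antisymmetric module endomorphisms of $\Eps$ and elements of $\Anti_\mathcal{A}^2\Eps$ — in particular establishing that $\RothBracket{t(D),t(E)}'$ represents $-[T_D,T_E]$ under metric raising — since it is exactly this identification that turns the classical curvature transformation formula into the required identity within the Rothstein algebra.
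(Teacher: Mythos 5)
Your proposal is correct and takes essentially the same route as the paper: the paper's proof likewise notes that $t$ lowers the symmetric degree (so $\exp(t)$ is a well-defined degree-zero automorphism of $\wedge$) and then reduces the bracket compatibility to a check on generators, which it dismisses as ``straightforward computations.'' Your write-up supplies exactly those computations --- in particular the change-of-connection curvature identity needed for the $(D,E)$ case --- and the sign bookkeeping checks out.
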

\begin{proof}
    First of all notice that $\exp(t)$ is well-defined since $t$
    lowers the symmetric degree in
    $\Sym_{\mathcal{A}}^p\Der(\mathcal{A})$ by one.  By definition
    \[
    t: \Rothstein^\bullet(\Eps)\longrightarrow \Rothstein^\bullet(\Eps)
    \]
    is homogeneous of degree zero and a derivation of the
    $\wedge$-product. Hence $\exp(t)$ is also homogeneous of degree
    zero but now an automorphism of $\wedge$. To prove that $\exp(t)$
    maps $\RothBracket{\cdot, \cdot}$ to $\RothBracket{\cdot,
      \cdot}'$, it suffices to show this on generators, which follows
    by straightforward computations.
\end{proof}

The construction of the Rothstein algebra enjoys some nice functorial
properties: Let $\mathcal{F}$ be another finitely generated,
projective module over an algebra $\mathcal{B}$, together with a full,
strongly non-degenerate inner product $\SP{\cdot,\cdot}_\mathcal{F}$
and a metric connection whence we obtain a Rothstein algebra
$\Rothstein^\bullet(\mathcal{F})$.  Now, let be $g: \mathcal{A}
\longrightarrow \mathcal{B}$ an invertible algebra morphism and $G:
\mathcal{E} \longrightarrow \mathcal{F}$ a $\ring{R}$-linear isometric
module map along $g$, i.e. $G$ satisfies $G(a x) = g(a) G(x)$ and
\begin{equation}
    g(\SP{x,y}_\mathcal{E}) = \SP{G(x),G(y)}_\mathcal{F}
\end{equation}
for all $a\in \mathcal{A}$ and $x,y \in \mathcal{E}$. As before we
define a left inverse $H: \mathcal{F} \longrightarrow \mathcal{E}$ of
$G$ for $y\in \mathcal{F}$ by
\begin{equation}\label{eq:left_inverse_of_G}
    \SP{H(y),x}_\mathcal{E} =
    g^{-1}(\SP{y,G(x)}_\mathcal{F})\quad\text{for 
      all } x \in \mathcal{E}.
\end{equation}
Since $g$ is invertible we further have a map $g_* :\Der(\mathcal{A})
\longrightarrow \Der(\mathcal{B})$ given by $g_* D = g\circ D \circ
g^{-1}$.
\begin{proposition}
    \label{proposition:FunctorialRothstein}
    Let $G:\Eps\longrightarrow \mathcal{F}$ be an $\ring{R}$-linear
    isometric bijection along an algebra isomorphism $g:\A
    \longrightarrow \mathcal{B}$. Then $G$ lifts to a morphism
    \begin{equation}
        G_* :\Rothstein^\bullet(\mathcal{E}) \longrightarrow
        \Rothstein^\bullet(\mathcal{F})
    \end{equation}
    of Poisson algebras such that $G_*(a) = g(a)$ and $G_*(x) = G(x)$
    for all $a \in \mathcal{A}$ and $x \in \mathcal{E}$.
\end{proposition}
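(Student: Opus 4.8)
The plan is to mimic the structure of the proof of the analogous push-forward statement for $\MC^\bullet$ (the proposition following Remark~\ref{remark:DeformationTheoryI}), using the fact that $\Rothstein^\bullet(\mathcal{E})$ is generated in degrees $0$, $1$, $2$ by Proposition~\ref{proposition:RothsteinAlgebra}. First I would define $G_*$ on the generators: $G_*(a) = g(a)$ for $a \in \mathcal{A}$, $G_*(x) = G(x)$ for $x \in \mathcal{E}$, and on $D \in \Der(\mathcal{A})$ by $G_*(D) = g_* D = g \circ D \circ g^{-1}$. Since $G$ is an isometric bijection along $g$, the left inverse $H$ of \eqref{eq:left_inverse_of_G} is in fact the two-sided inverse $G^{-1}$, and the push-forward connection $\nabla'' $ on $\mathcal{F}$ defined by $\nabla''_{g_* D}\, G(x) = G(\nabla_D x)$ is again a metric connection for $\SP{\cdot,\cdot}_\mathcal{F}$ (this uses isometry plus $g\circ D = (g_* D)\circ g$). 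I would then check that $G_*$, extended multiplicatively, is a well-defined isomorphism of the associative graded commutative algebras: it respects the relations $(P\otimes\xi)\wedge(Q\otimes\eta) = (P\vee Q)\otimes(\xi\wedge\eta)$ because $g_*$ is an algebra map $\Sym^\bullet_\mathcal{A}\Der(\mathcal{A}) \to \Sym^\bullet_\mathcal{B}\Der(\mathcal{B})$ compatible with $g$, and $G$ is compatible with $\wedge$ on $\Anti^\bullet$; bijectivity of $G$ and $g$ then makes $G_*$ bijective with inverse $(G^{-1})_*$.

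Next I would verify that $G_*$ intertwines the two Rothstein brackets, where on $\Rothstein^\bullet(\mathcal{F})$ we use the bracket built from $\nabla''$. Because both brackets satisfy the graded Leibniz rule and $G_*$ is an algebra morphism, it suffices to check the identity $G_*\RothBracket{\phi,\psi} = \RothBracket{G_*\phi, G_*\psi}''$ on pairs of generators $\phi,\psi$ of degree $\le 2$. This is a short case analysis: the cases $\RothBracket{a,b}$, $\RothBracket{a,x}$ are trivial; $\RothBracket{x,y} = \SP{x,y}$ goes through by isometry; $\RothBracket{D,a} = -D(a)$ uses $g(D(a)) = (g_* D)(g(a))$; $\RothBracket{D,x} = -\nabla_D x$ uses the very definition of $\nabla''$; and $\RothBracket{D,E} = -[D,E] - r(D,E)$ uses that $g_*[D,E] = [g_* D, g_* E]$ together with the fact that the curvature $r''$ of $\nabla''$ is the image of $r$ under $G_*$, which itself follows from $R''(g_* D, g_* E)\, G(x) = G(R(D,E)x)$ and isometry.

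Finally I would address the dependence on the chosen connection. The module $\mathcal{F}$ comes equipped with its own metric connection $\nabla_\mathcal{F}$, which need not equal the push-forward $\nabla''$; but by Theorem~\ref{theorem:ChangingTheConnection} the map $\exp(t)$, with $t$ built from the difference $\nabla_\mathcal{F} - \nabla''$, is a graded Poisson isomorphism between the two Rothstein structures on $\Rothstein^\bullet(\mathcal{F})$. Composing, $\exp(t)\circ G_*$ is the desired Poisson morphism relating $(\Rothstein^\bullet(\mathcal{E}), \RothBracket{\cdot,\cdot})$ and $(\Rothstein^\bullet(\mathcal{F}), \RothBracket{\cdot,\cdot})$ for the prescribed connections, and it still restricts to $g$ and $G$ in degrees $0$ and $1$ since $t$ vanishes there. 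I expect the main obstacle to be purely bookkeeping: carefully formulating the push-forward connection $\nabla''$ and confirming that its curvature matches $G_* r$, so that the single nontrivial bracket relation $\RothBracket{D,E} = -[D,E] - r(D,E)$ survives transport; everything else reduces to the generating property and the Leibniz rule.
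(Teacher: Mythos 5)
Your proposal is correct and follows essentially the same route as the paper: define $G_*$ on generators via $g$, $G$, and $g_*$, transport the connection to a metric connection $\nabla''$ on $\mathcal{F}$ so that the bracket relations (including the curvature term) are preserved on generators, and then compose with the $\exp(t)$ isomorphism of Theorem~\ref{theorem:ChangingTheConnection} to pass to the originally given connection on $\mathcal{F}$. Your added observation that $\exp(t)$ acts as the identity in degrees $0$ and $1$, so the composite still restricts to $g$ and $G$, is a small but worthwhile point the paper leaves implicit.
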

\begin{proof}
    Suppose first that we also have $G(\nabla^\mathcal{E}_D x) =
    \nabla^\mathcal{F}_{g_*D}(G(x))$ for all $D \in \Der(\mathcal{A})$
    and $x \in \mathcal{E}$. Then we define $G_*(a) = g(a)$, $G_*(x) =
    G(x)$ and $G_*(D) = g_*(D)$ for $a \in \mathcal{A}$, $x \in
    \mathcal{E}$ and $D \in \Der(\mathcal{A})$ and extend $G_*$ to the
    whole algebra $\Rothstein^\bullet(\mathcal{E})$ by enforcing it to
    be a algebra morphism with respect to the $\wedge$-products.  It
    follows that $G_*$ is a morphism of Poisson algebras as this is
    true on generators. For the general case define on $\mathcal{F}$
    another metric connection $\nabla'$ by $\nabla'_D y =
    G(\nabla^\mathcal{E}_{g^* D} H(y))$ for $D \in \Der(\mathcal{B})$
    and $y \in \mathcal{F}$, where $H$ is given by
    (\ref{eq:left_inverse_of_G}). Since we assume $G$ to be bijective
    we have $G^{-1} = H$, and one easily shows that $\nabla'$ is in
    fact a well defined metric connection for $\mathcal{F}$. Moreover
    $G(\nabla^\mathcal{E}_D x) = \nabla'_{g_*D}(G(x))$ for all $D \in
    \Der(\mathcal{A})$ and we hence get a Poisson morphism
    $\Rothstein^\bullet(\mathcal{E}) \longrightarrow
    \Rothstein'^\bullet(\mathcal{F})$, where
    $\Rothstein'^\bullet(\mathcal{F})$ denotes the Rothstein algebra
    together with the graded Poisson bracket constructed using
    $\nabla'$. Since by Theorem~\ref{theorem:ChangingTheConnection} we
    have a canonical isomorphism $ \Rothstein'^\bullet(\mathcal{F})
    \longrightarrow \Rothstein^\bullet(\mathcal{F})$ of graded Poisson
    algebras, the proof is complete.
\end{proof}

%
% The Isomorphism 
%

\section{The Symbol Calculus for $\mathcal{C}^\bullet(\mathcal{E})$}
\label{sec:TheIsomorphism}

In this section we find the relation between the two Poisson algebras
$\mathcal{C}^\bullet(\mathcal{E})$ and
$\Rothstein^\bullet(\mathcal{E})$. In particular, we will simplify the
cohomology for the deformation theory of Courant algebroid structures
from Remark~\ref{remark:DeformationTheoryI}.
\begin{definition}
    Let the $\mathcal{A}$-linear map $\mathcal{J} :
    \Rothstein^\bullet(\Eps) \longrightarrow \MC^\bullet(\Eps)$ be defined
    on generators by
    \begin{equation}
        \mathcal{J}(a) = a,
        \quad
        \mathcal{J}(x) = x,
        \quad
        \textrm{and}
        \quad
        \mathcal{J}(D) =  -\nabla_D
    \end{equation}
    for $a\in \A$, $x\in \Eps$ and $D\in \Der(\A)$, and extended to
    all degrees as homomorphism of $\wedge$.
\end{definition}
\begin{proposition}
   \begin{enumerate}
    \item
      The map $\mathcal{J}$ is a homomorphism of Poisson algebras.
    \item
      Let $\phi \in \Rothstein^r(\Eps)$ with $r\geq 2$, then
      \begin{equation}\label{eq:derived_bracket_formula}
         \mathcal{J}(\phi)(x_1,\ldots,x_{r-1}) =
         \RothBracket{\RothBracket{\ldots\RothBracket{\phi,x_1},\ldots},x_{r-1}}
      \end{equation}
      and
      \begin{equation}
         \sigma_{\mathcal{J}(\phi)}(x_1,\ldots,x_{r-2})a =
         \RothBracket{\RothBracket{\RothBracket{\ldots\RothBracket{\phi,x_1},
             \ldots},x_{r-2}},a} 
      \end{equation}
      for all $x_1,\ldots,x_{r-1} \in \Eps$ and $a\in \A$
   \end{enumerate}
\end{proposition}
\begin{proof}
   That $\mathcal{J}$ is an homomorphism is obviously true for
   generators and hence for all elements in
   $\Rothstein^\bullet(\Eps)$. To show
   Equation~(\ref{eq:derived_bracket_formula}), note first 
   that $[\mathcal{J}(\phi),x] = [\mathcal{J}(\phi),\mathcal{J}(x)] =
   \mathcal{J}(\RothBracket{\phi,x})$ for all $x\in \Eps$, and finish
   the proof then by induction over $r$. The rest is now clear.
\end{proof}
Recall that for a projective and finitely generated module $\Eps$ we have an
isomorphism $\lambda: \SDer^p(\A) \otimes_\A \Anti_\A^\bullet \Eps
\longrightarrow \SDer^p(\A,\Anti_\A^\bullet \Eps)$ given by
\[
\lambda(P \otimes \xi)(a_1,\ldots,a_p) = P(a_1\ldots,a_p)\xi
\]
for all $a_1,\ldots,a_p \in \A$. If now $\phi \in
\Sym_{\mathcal{A}}^p\Der(\mathcal{A}) \otimes_\A \Anti_\A^k\Eps$ for
$p \geq 1$ and $k \geq 0$ then
\[
\RothBracket{\RothBracket{\cdots\RothBracket{\phi,a_1},\cdots},a_p} =
(-1)^p \lambda(\phi)(a_1,\ldots,a_p)
\]
and injectivity of $\lambda$ implies that $\RothBracket{\phi,a} = 0$
for all $a\in \A$ if and only $\phi = 0$. 
\begin{lemma}\label{lemma:injectivity-Lambda}
    Let $\phi\in \Rothstein^r(\Eps)$ with $r\geq 1$. 
    Then
    \begin{equation}\label{eq:injectivity}
      \RothBracket{\RothBracket{\cdots\RothBracket{\phi,x_1},\cdots},x_r} = 0
    \end{equation}
    for all $x_1,\ldots,x_r \in \Eps$ if and only if $\phi = 0$.
\end{lemma}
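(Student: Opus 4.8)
The plan is to prove the statement by induction on the total degree $r$ of $\phi \in \Rothstein^r(\Eps)$, using the decomposition $\Rothstein^r(\Eps) = \bigoplus_{2p+k=r} \Sym_{\mathcal{A}}^p\Der(\mathcal{A}) \otimes_\mathcal{A} \Anti_\mathcal{A}^k\mathcal{E}$ together with the already-established Lemma~\ref{lemma:lambda-iso}. Since one direction ($\phi = 0$ implies all iterated brackets vanish) is trivial, I concentrate on the nontrivial direction: assuming \eqref{eq:injectivity} holds for all $x_1,\ldots,x_r \in \Eps$, I want to conclude $\phi = 0$. The base case $r = 1$ splits into the two subcases $\phi \in \mathcal{E}$ (here $\RothBracket{\phi, x_1} = \SP{\phi, x_1}$, so the hypothesis says $\SP{\phi, x_1} = 0$ for all $x_1$, and strong nondegeneracy gives $\phi = 0$) — note there is no $p \geq 1$ contribution in degree $1$, so this is the only case.

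For the inductive step, fix $\phi \in \Rothstein^r(\Eps)$ with $r \geq 2$ and suppose \eqref{eq:injectivity} holds. First I would show $\RothBracket{\phi, x_r} = 0$ for all $x_r \in \Eps$; since $\RothBracket{\phi, x_r} \in \Rothstein^{r-1}(\Eps)$ and, by the graded Jacobi identity, the iterated brackets $\RothBracket{\RothBracket{\cdots\RothBracket{\RothBracket{\phi,x_r},x_1},\cdots},x_{r-1}}$ vanish for all choices of the remaining arguments, the induction hypothesis applied to $\RothBracket{\phi, x_r}$ yields $\RothBracket{\phi, x_r} = 0$. Here I must be a little careful about the order of the bracketed arguments: the Poisson bracket is a derived-bracket-type expression and the iterated brackets in different orders are related by the graded Jacobi identity of Theorem~\ref{theorem:RothsteinPoissonBracket}, so reshuffling $x_r$ to the innermost slot only introduces lower-order iterated-bracket terms which also vanish by hypothesis; this bookkeeping is the one place that needs genuine care. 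Once $\RothBracket{\phi, \mathcal{E}} = 0$ is in hand, I would use $\RothBracket{\cdot, \cdot}$ to express $\RothBracket{\phi, x} = 0$ in terms of the components of $\phi$ under the decomposition: writing $\phi = \sum_{2p+k=r} \phi_{p,k}$ with $\phi_{p,k} \in \Sym_{\mathcal{A}}^p\Der(\mathcal{A}) \otimes_\mathcal{A} \Anti_\mathcal{A}^k\mathcal{E}$, the Leibniz rule gives $\RothBracket{P \otimes \xi, x} = P \otimes (\RothBracket{\xi, x}) \pm (\RothBracket{P, x}) \otimes \xi$, i.e. a "contraction-plus-covariant-derivative" operator that lowers $k$ by one (on the $\Anti^k\mathcal{E}$ part) or lowers $p$ by one while raising $k$ by one (on the $\Sym^p\Der$ part, via $\RothBracket{D, x} = -\nabla_D x$).

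The final step is then a purely algebraic argument that the operator $\phi \mapsto (x \mapsto \RothBracket{\phi, x})$ is injective on $\Rothstein^r(\Eps)$ for $r \geq 1$, using that $\Eps$ is finitely generated projective with a strongly nondegenerate inner product. Concretely, I would show: if $\RothBracket{\phi, x} = 0$ for all $x$, then $\RothBracket{\phi, a} = 0$ for all $a \in \A$ as well — this is because $a$ can be written (using fullness of the inner product) as a finite sum $a = \sum_i \SP{u_i, v_i} = \sum_i \RothBracket{u_i, v_i}$, and then $\RothBracket{\phi, a} = \sum_i \RothBracket{\phi, \RothBracket{u_i, v_i}}$, which by the graded Jacobi identity is a sum of terms each containing a factor $\RothBracket{\phi, u_i}$ or $\RothBracket{\phi, v_i}$, hence vanishes. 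But $\RothBracket{\phi, a} = 0$ for all $a \in \A$ is precisely the hypothesis of the last part of Lemma~\ref{lemma:lambda-iso}, which forces $\phi = 0$. The main obstacle, as noted, is the careful handling of argument-order in the iterated brackets so that the induction hypothesis can be applied cleanly; everything else reduces to the injectivity statement already proved in Lemma~\ref{lemma:lambda-iso} plus routine use of the Leibniz and Jacobi identities for the Rothstein bracket.
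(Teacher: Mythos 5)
Your overall strategy matches the paper's: induct on $r$ to reduce \eqref{eq:injectivity} to $\RothBracket{\phi,x}=0$ for all $x\in\Eps$, use fullness and the Jacobi identity to deduce $\RothBracket{\phi,a}=0$ for all $a\in\A$, and then invoke Lemma~\ref{lemma:lambda-iso}. (Incidentally, the reshuffling you worry about in the inductive step is unnecessary: the hypothesis quantifies over \emph{all} $x_1,\ldots,x_r$, so you may simply apply the induction hypothesis to $\RothBracket{\phi,x_1}\in\Rothstein^{r-1}(\Eps)$ with the innermost argument; no Jacobi bookkeeping is needed there.)

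However, your final step has a genuine gap. You conclude from ``$\RothBracket{\phi,a}=0$ for all $a\in\A$'' that $\phi=0$ via Lemma~\ref{lemma:lambda-iso}, but that lemma is only stated for elements of a single summand $\Sym_{\mathcal{A}}^p\Der(\mathcal{A})\otimes_\A\Anti_\A^k\Eps$ with $p\geq 1$. The component $\phi_0\in\Anti_\A^r\Eps$ of $\phi$ in the decomposition $\Rothstein^r(\Eps)=\bigoplus_{2p+k=r}\Sym_{\mathcal{A}}^p\Der(\mathcal{A})\otimes_\A\Anti_\A^k\Eps$ satisfies $\RothBracket{\phi_0,a}=0$ for every $a\in\A$ \emph{automatically} (the bracket of $\Eps$ with $\A$ vanishes and this propagates through the Leibniz rule), so the condition $\RothBracket{\phi,a}=0$ carries no information about $\phi_0$ and cannot force it to vanish. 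The paper therefore splits $\phi=\sum_{2p+k=r}\phi_p$, observes that the terms $\RothBracket{\phi_p,a}$ land in distinct summands so that $\RothBracket{\phi_p,a}=0$ for each $p\geq 1$ separately, kills those components by Lemma~\ref{lemma:lambda-iso}, and then disposes of $\phi_0$ by a \emph{separate} argument: since $\RothBracket{\phi_0,x}=0$ for all $x\in\Eps$ and the inner product is strongly non-degenerate, iterated contraction shows $\phi_0=0$. You need to add this last step; without it the top antisymmetric component survives your argument untouched.
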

\begin{proof}
    Since the bilinear form $\SP{\cdot,\cdot} =
    \RothBracket{\cdot,\cdot}|_{\Eps\times \Eps}$ is non-degenerate,
    the lemma is true for $r = 1$. For $\phi \in \Rothstein^r(\Eps)$
    with $r\geq 2$, we have $\RothBracket{\phi,x_1} \in
    \Rothstein^{r-1}(\Eps)$, and we get by induction that
    (\ref{eq:injectivity}) is true for all $x_1,\ldots,x_r \in \Eps$
    if and only if $\RothBracket{\phi,x} = 0$ for all $x\in \Eps$.
    Hence
    \[
    \RothBracket{\phi,\SP{x,y}} = \RothBracket{\phi  ,\RothBracket{ x,y}} =
    \RothBracket{\RothBracket{\phi,x},y} +(-1)^r 
    \RothBracket{x,\RothBracket{\phi,y}}  = 0
    \]
    for all $x,y \in \Eps$, and due to fullness we conclude that also
    $\RothBracket{\phi,a} = 0$ for all $a\in \A$.  Now write $\phi$ as
    a sum $\phi = \sum_{2p +k = r} \phi_p$ with $ \phi_p \in
    \Sym_{\mathcal{A}}^p\Der(\mathcal{A})\otimes_\A\Anti_\A^{r-2p}\Eps$.
    Then $\RothBracket{\phi_p,a}=0$ for all $p\geq 1$, and thus
    $\phi_p = 0$ for all $p \geq 1$. Hence we have
    $\RothBracket{\phi_0,x} = 0$ for all $x\in \Eps$, and with the
    non-degeneracy of the inner product follows then that also $\phi_0
    = 0$.
\end{proof}
The last lemma implies now immediately the injectivity of
$\mathcal{J}$. In general $\mathcal{J}$ is not
surjective, however. The reason is that there might appear
non-factorizing symmetric multiderivations of $\A$ when calculating
the higher symbols $\pi^{(p)}_\C$ of an element $\C \in
\MC^\bullet(\Eps)$ while the Rothstein-Poisson algebra was
constructed using only symmetric products of ordinary derivations.

\begin{corollary}\label{corollary:Isomorphism-Rothstein-Courant}
    Let $\hat{\MC}^\bullet(\Eps)$ be the $\wedge$-subalgebra of
    $\MC^\bullet(\Eps)$ generated by $\A$, $\Eps$ and $\MC^2(\Eps)$.
    Then $\hat{\MC}^\bullet(\Eps)$ is closed under the bracket
    $[\cdot,\cdot]$ and $\mathcal{J}$ is an isomorphism of Poisson
    algebras
    \begin{equation}
        \mathcal{J}: \Rothstein^\bullet(\Eps) \longrightarrow
        \hat{\MC}^\bullet(\Eps).
    \end{equation}
\end{corollary}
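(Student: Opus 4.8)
The statement has two parts: that $\hat{\MC}^\bullet(\Eps)$ is closed under $[\cdot,\cdot]$, and that $\mathcal{J}$ restricts to a Poisson isomorphism onto it. The closure under $[\cdot,\cdot]$ is in fact automatic once we know that $\mathcal{J}$ is surjective onto $\hat{\MC}^\bullet(\Eps)$: since $\mathcal{J}$ is a Poisson morphism (as shown above) and $\Rothstein^\bullet(\Eps)$ is closed under $\RothBracket{\cdot,\cdot}$, its image is a sub-Poisson-algebra of $\MC^\bullet(\Eps)$, hence closed under $[\cdot,\cdot]$. So the whole statement reduces to showing that $\mathcal{J}: \Rothstein^\bullet(\Eps) \longrightarrow \hat{\MC}^\bullet(\Eps)$ is a bijection. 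Surjectivity is nearly by definition: $\mathcal{J}$ is a $\wedge$-homomorphism sending $\A$, $\Eps$ to themselves and $\Der(\A)$ onto the operators $-\nabla_D$. It remains to check that the $-\nabla_D$ together with $\A$ and $\Eps$ generate all of $\MC^2(\Eps)$ under $\wedge$; this follows because $\MC^2(\Eps)$ consists of $\C$ with $\C(ax) = a\C(x) + (\sigma_{\C}a)x$ (Lemma~\ref{lemma.courantmaps-properties}), so $\C + \nabla_{\sigma_{\C}}$ is $\A$-linear, i.e. lies in $\End_\A(\Eps) \subseteq \Anti^2_\A\Eps \cdot (\text{stuff})$ — more precisely one identifies the $\A$-linear elements of $\MC^2(\Eps)$ with $\Anti^2_\A\Eps$ via the inner product (using Equation~\eqref{eq:def.CRN-Bra.11} and fullness), and these are clearly in the $\wedge$-subalgebra generated by $\Eps$.

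**Injectivity** is the heart of the matter, and it is exactly Lemma~\ref{lemma:injectivity-Lambda} transported across $\mathcal{J}$. Suppose $\mathcal{J}(\phi) = 0$ for $\phi \in \Rothstein^r(\Eps)$. For $r \geq 2$, by the Corollary preceding Lemma~\ref{lemma:lambda-iso} we have
\begin{equation}
  0 = \mathcal{J}(\phi)(x_1,\ldots,x_{r-1}) = \RothBracket{\RothBracket{\cdots\RothBracket{\phi,x_1},\cdots},x_{r-1}}
\end{equation}
for all $x_1,\ldots,x_{r-1}\in\Eps$, but that is $r-1$ brackets, not $r$. To get the last one, pair with a further element of $\Eps$ using the inner product: since $\RothBracket{\RothBracket{\cdots},x_{r-1}} \in \Rothstein^1(\Eps) = \Eps$ when we have applied $r-1$ brackets, wait — one must be careful about which degree one lands in. Applying $r-1$ brackets to $\phi\in\Rothstein^r(\Eps)$ lands in $\Rothstein^1(\Eps) = \Eps$, and the Corollary says this element of $\Eps$ equals $\mathcal{J}(\phi)(x_1,\ldots,x_{r-1})$. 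So $\mathcal{J}(\phi) = 0$ already gives $\RothBracket{\cdots\RothBracket{\phi,x_1}\cdots,x_{r-1}} = 0$ for all $x_i$, and applying one more bracket with any $x_r\in\Eps$ trivially keeps it zero; hence Equation~\eqref{eq:injectivity} holds and Lemma~\ref{lemma:injectivity-Lambda} forces $\phi = 0$. For $r = 0,1$ the maps $\mathcal{J}|_{\A} = \id$ and $\mathcal{J}|_{\Eps} = \id$ are injective outright. Combined with surjectivity, $\mathcal{J}$ is a Poisson isomorphism onto its image $\hat{\MC}^\bullet(\Eps)$.

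**Main obstacle.** The genuinely delicate point is the surjectivity onto $\MC^2(\Eps)$: one must verify that every quasi-$2$-Courant bracket decomposes as an $\A$-linear piece (corresponding to an element of $\Anti^2_\A\Eps$ under the strongly non-degenerate inner product) plus a piece of the form $-\nabla_{D}$ for a uniquely determined $D = \sigma_{\C}\in\Der(\A)$, with the $\A$-linear remainder lying in the $\wedge$-subalgebra generated by $\Eps$ (which needs the identification of $\End_\A(\Eps)$-type maps with $\Anti^2_\A\Eps$ via $\SP{\cdot,\cdot}$, again using fullness and $\Eps$ finitely generated projective). Everything else — that $\mathcal{J}$ is a $\wedge$- and $\RothBracket{\cdot,\cdot}$-morphism, the closure under $[\cdot,\cdot]$, and injectivity — is already packaged in the preceding propositions and in Lemma~\ref{lemma:injectivity-Lambda}.
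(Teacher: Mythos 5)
Your proof is correct and follows essentially the same route as the paper's: injectivity comes from Lemma~\ref{lemma:injectivity-Lambda}, surjectivity reduces to degree $2$, where one splits $\mathsf{D}\in\MC^2(\Eps)$ into $\nabla_{\sigma_{\mathsf{D}}}$ plus a skew-adjoint $\A$-linear remainder identified with an element of $\Anti_{\A}^2\Eps$ via the strongly non-degenerate inner product, and the rest follows from $\mathcal{J}$ being a homomorphism of $\wedge$ and of the brackets (the paper obtains closure of $\hat{\MC}^\bullet(\Eps)$ directly from the Leibniz rule rather than as a consequence of surjectivity, but this is cosmetic). The only slip is a sign: since $\mathsf{D}(ax) = a\,\mathsf{D}(x) + (\sigma_{\mathsf{D}}a)\,x$ and $\nabla_{\sigma_{\mathsf{D}}}(ax) = a\,\nabla_{\sigma_{\mathsf{D}}}x + (\sigma_{\mathsf{D}}a)\,x$, the $\A$-linear part is $\mathsf{D} - \nabla_{\sigma_{\mathsf{D}}}$, not $\mathsf{D} + \nabla_{\sigma_{\mathsf{D}}}$ (the latter fails the Leibniz rule by $2(\sigma_{\mathsf{D}}a)\,x$); this is consistent with $\mathcal{J}(D) = -\nabla_D$ having symbol $-D$ and does not affect the structure of the argument.
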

\begin{proof}
    From Lemma~\ref{lemma:injectivity-Lambda} follows that
    $\mathcal{J}$ is injective.  Moreover it is clear from the Leibniz
    rule (\ref{eq:courant-leibniz}) that $\hat{\MC}^\bullet(\Eps)$ is
    a Poisson subalgebra.  If $\mathsf{D} \in \MC^2(\Eps)$ we can
    define an element $\xi \in \Anti_\mathcal{A}^2 \Eps$ by
    $\SP{\xi,x\wedge y} = \SP{\mathsf{D}(x) -
      \nabla_{\sigma_{\mathsf{D}}} x,y}$.  It follows that
    $\RothBracket{- \xi +\sigma_{\mathsf{D}},x} = \mathsf{D}(x)$ for
    all $x\in \Eps$, hence $\mathsf{D} \in
    \mathcal{J}(\Rothstein^2(\Eps))$ and therefore $\MC^2(\Eps) \cong
    \Rothstein^2(\Eps)$. Since $\mathcal{J}$ is a homomorphism with
    respect to the $\wedge$-products, the rest follows now
    immediately.
\end{proof}
\begin{remark}\label{remark:Iso-Rothstein1-Rothstein2}
    It follows again that Rothstein algebras to different connections
    are isomorphic since they are all isomorphic to
    $\hat{\MC}^\bullet(\Eps)$.
\end{remark}
The following easy observation is crucial to simplify the deformation
theory of Courant algebroids drastically:
\begin{lemma}
    We have $\hat{\MC}^3(\Eps) = \MC^3(\Eps)$.
\end{lemma}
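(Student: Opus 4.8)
The claim $\hat{\MC}^3(\Eps) = \MC^3(\Eps)$ asks us to show that every $\C \in \MC^3(\Eps)$ can be written as a $\wedge$-product of elements of $\A$, $\Eps$ and $\MC^2(\Eps)$ (plus sums thereof). Since $\hat{\MC}^\bullet(\Eps) \subseteq \MC^\bullet(\Eps)$ trivially, only the reverse inclusion in degree $3$ is at stake. The natural strategy is to use the isomorphism $\mathcal{J}: \Rothstein^\bullet(\Eps) \to \hat{\MC}^\bullet(\Eps)$ of Corollary~\ref{corollary:Isomorphism-Rothstein-Courant} together with the explicit description $\Rothstein^3(\Eps) = \Der(\A)\otimes_\A \Eps \oplus \Anti_\A^3 \Eps$. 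So it suffices to produce, for a given $\C \in \MC^3(\Eps)$, an element $\phi \in \Rothstein^3(\Eps)$ with $\mathcal{J}(\phi) = \C$; equivalently, to split off the ``symbol part'' of $\C$ using the connection $\nabla$ and check that what remains is $\A$-linear in all three arguments, hence an element of $\Anti^3_\A\Eps$ after antisymmetrization.

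First I would recall the derivation rule \eqref{derivation-rule}: for $\C \in \MC^3(\Eps)$ one has $\C(x_1, a x_2) = a\C(x_1,x_2) + (\sigma_\C(x_1)a)\,x_2$, and the symbol $\sigma_\C \in \Hom_\ring{R}(\Eps, \Der(\A))$, which one can view as an element $\widetilde\sigma_\C \in \Der(\A)\otimes_\A \Eps$ via the identification $\Der(\A,\Eps)\cong\Der(\A)\otimes_\A\Eps$ (using projectivity and finite generation, exactly as in \eqref{eq:DCDef}). The element $\mathsf{D}_\C := \mathcal{J}(\widetilde\sigma_\C) \in \MC^2(\Eps)$ — concretely built from $-\nabla$ composed with $\sigma_\C$ — has the same symbol as $i_x\C$ for each $x$, so $\C' := \C + (\text{correction term involving } \mathsf{D}_\C)$ has vanishing symbol. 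An element of $\MC^3(\Eps)$ with vanishing symbol is $\A$-linear in all arguments by \eqref{derivation-rule}, and conditions \eqref{multi.courant.der}–\eqref{multi.courant.antisymm} then force it (after passing to the associated form $\omega_{\C'}$) to be totally antisymmetric, i.e. to lie in $\Anti^3_\A\Eps \subseteq \Rothstein^3(\Eps)$, which is already visibly in the image of $\mathcal{J}$. Alternatively, and more cleanly: by Corollary~\ref{corollary:Isomorphism-Rothstein-Courant} we know $\MC^2(\Eps) = \mathcal{J}(\Rothstein^2(\Eps))$ and $\Eps = \mathcal{J}(\Rothstein^1(\Eps))$, and the map $\Rothstein^2(\Eps)\otimes_\A\Rothstein^1(\Eps) \to \MC^3(\Eps)$, $\mathsf{D}\otimes x \mapsto \mathsf{D}\wedge x$, composed with the $\A$-linear-in-all-arguments/antisymmetric complement, surjects onto $\MC^3(\Eps)$ — this is a dimension-count-free version of the same decomposition $\MC^3 = \MC^2\wedge\Eps + \Anti^3$.

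The main obstacle is verifying that the ``remainder'' $\C'$, once the symbol has been killed by the connection term, is genuinely $\A$-linear and antisymmetric, so that it lands in $\Anti^3_\A\Eps$ rather than in some larger space — in the differential-geometric picture this is where the symmetric biderivations of the Remark following Example~\ref{example:SymDerSDer} could in principle intrude, and one must check they cannot appear in degree $3$. This comes down to the observation that $\MC^3$ has ``top symbol'' valued in $\Der(\A)$ (not in $\SDer^2(\A)$), so the obstruction $\SDer^2(\A)/\Sym^2_\A\Der(\A)$ that plagues higher degrees (e.g. the bad element \eqref{eq:BadC} in $\MC^4$) simply has no room to occur here: the recursion $\delta^{(1)}_\C = \dif_\C \in \Hom(\Der(\A)\otimes_\A\Eps)$ already lives in a space built from $\Der(\A)$ alone, and $\delta^{(p)}_\C$ for $2p\ge 3$ is vacuous. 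Concretely I would write: for $\C\in\MC^3(\Eps)$, set $\mathsf{D} = -\mathcal{J}(\dif_\C) \in \MC^2(\Eps)$ (noting $\dif_\C\in\Der(\A)\otimes_\A\Eps\subseteq\Rothstein^3$, so $\mathcal{J}(\dif_\C)\in\hat\MC^3$ — wait, degree check: $\dif_\C$ has degree $2+1=3$, so this lands in $\MC^3$, not $\MC^2$); more carefully one uses that $\widetilde\sigma_\C \in \Der(\A)\otimes_\A\Eps$ viewed inside $\Rothstein^3(\Eps)$ gives $\mathcal{J}(\widetilde\sigma_\C) \in \hat\MC^3(\Eps)$ with the same symbol behaviour as $\C$, and then $\C - \mathcal{J}(\widetilde\sigma_\C)$ has vanishing symbol, is $\A$-trilinear and antisymmetric, hence equals $\mathcal{J}$ of an element of $\Anti^3_\A\Eps$. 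Summing, $\C \in \mathcal{J}(\Rothstein^3(\Eps)) = \hat\MC^3(\Eps)$, which is the claim.
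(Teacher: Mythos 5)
Your proposal is correct and follows essentially the same route as the paper: write $\dif_{\C} = D^i(\cdot)\, e_i$ using projectivity and finite generation, subtract the correction $\nabla_{D^i}\wedge e_i$ (which is exactly $-\mathcal{J}$ applied to the symbol part in $\Der(\A)\otimes_\A\Eps\subseteq\Rothstein^3(\Eps)$), and check that the remainder is a skew-symmetric $\A$-trilinear form, hence lies in $\Anti^3_\A\Eps$. Your observation that the obstruction $\SDer^2(\A)/\Sym^2_\A\Der(\A)$ has no room to occur in degree $3$ because $\dif_\C$ already lives in $\Der(\A,\Eps)\cong\Der(\A)\otimes_\A\Eps$ is precisely the point the paper exploits, and your mid-stream degree correction lands on the right statement.
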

\begin{proof}
    Let $\C \in \MC^3(\Eps)$ and let $\dif_{\C} \in \Der(\A,\Eps)$ be
    given by $\SP{\dif_{\C} a,x} = \sigma_{\C}(x)a$. Since $\Eps$ is
    projective and finitely generated, we can find $D^1,\ldots,D^n \in
    \Der(\A)$ and $e_1,\ldots,e_n \in \Eps$ such that $\dif_{\C}(a) =
    D^i(a) e_i$. It follows that $ \sigma_{\C}(x)a = \SP{\dif_{\C}
      a,x} = D^i(a)\SP{e_i,x}$, i.e.  $\sigma_{\C}(x) =
    \SP{e_i,x}D^i$.  Let $\nabla$ be a metric connection for $\Eps$
    and define $T \in \MC^3(\Eps)$ by $T = \C - \nabla_{D^i} \wedge
    e_i$.  Then
    \begin{align*}
        \SP{ T(x,y),z} &= \SP{\C(x,y),z} - \SP{\nabla_{D^i} x,y}
        \SP{e_i,z} + \SP{\nabla_{D^i}x,z}\SP{ e_i,y} -
        \SP{\nabla_{D^i}y,z} \SP{ e_i,x} \\
        &= \SP{\C(x,y),z} - \SP{\nabla_{\sigma_{\C}(z)}x,y} +
        \SP{\nabla_{\sigma_{\C}(y)}x,z} -
        \SP{\nabla_{\sigma_{\C}(x)}y,z},
    \end{align*}
    and one easily shows that $\eta = \SP{T(\cdot,\cdot),\cdot}$
    is skew-symmetric and $\A$-linear.  Hence $\C \in
    \Anti_\mathcal{A}^3 \MC^1(\Eps) \oplus \big( \MC^1(\Eps)
    \wedge_\mathcal{A} \MC^2(\Eps)\big)$.
\end{proof}

Since $\mathcal{J}$ respects the $\wedge$-product we have also found
the inverse image of $\C \in \mathcal{C}^3(\mathcal{E})$ in the
Rothstein algebra: Use the strongly non-degenerate inner product to
define the element $\xi \in \Anti_\mathcal{A}^3 \Eps$
such that $\mathcal{J}(\xi) = T$. It follows then that
\begin{equation}
    \mathcal{J}( \xi - D^i \wedge e_i) =
    \mathcal{J}(\xi) 
    - \mathcal{J}(D^i)\wedge 
    \mathcal{J}(e_i) = T + \nabla_{D^i} \wedge e_i = \C.
\end{equation}

Let $m\in \MC^3(\Eps) = \hat{\MC}^3(\Eps)$ with $[m,m] = 0$. Then
$\delta_m = [m,\cdot]$ squares to zero and since
$\hat{\MC}^\bullet(\Eps)$ is closed under $[\cdot,\cdot]$ we get a
subcomplex
\begin{equation}
    \label{eq:RestrictedComplex}
    \xymatrix{ \A \ar[r]^-{\delta_m} & \Eps
      \ar[r]^-{\delta_m} & \hat{\MC}^2(\Eps) \ar[r]^-{\delta_m} &  \hat{\MC}^3(\Eps)
      \ar[r]^-{\delta_m} &  \hat{\MC}^4(\Eps)
      \ar[r]^-{\delta_m} &  }
    \cdots
\end{equation}
For $r\leq 3$ we have $\hat{\MC}^r(\Eps) = \MC^r(\Eps)$ and hence also
$H^r(\hat{\MC}(\Eps),\delta_m) = H^r(\MC(\Eps),\delta_m)$. Let $m_t =
m + m_1 t + m_2 t^2 +\cdots + m_k t^k$ be a deformation of $m$ of
order $k$. One can show that $ \sum_{i=1}^{k}[m_i,m_{k+1-i}]$ is a
cocycle and that moreover $m'_t = m_t + m_{k+1} t^{k+1}$ is a
deformation of order $k+1$ if and only if
\begin{equation}\label{eq:MaurerCartan}
    2 \delta_m m_{k+1} = -\sum_{i=1}^{k}[m_i,m_{k+1-i}].
\end{equation}
Since $\hat{\MC}^\bullet(\Eps)$ is closed under the bracket, the right
hand side of (\ref{eq:MaurerCartan}) is an element in
$\ker(\delta_m:\hat{\MC}^4(\Eps) \longrightarrow \hat{\MC}^5(\Eps))$,
whence the obstructions for finding $m_{k+1} \in \MC^3(\Eps) =
\hat{\MC}^3(\Eps)$ are in $H^4(\hat{\MC}(\Eps),\delta_m)$. To study
the deformation theory of a given Courant algebroid structure $m$, we
therefore can restrict ourselves to the smaller algebra
$\hat{\MC}^\bullet(\Eps)$ .
\begin{theorem}[Deformation theory, II]
    \label{theorem:DeformationProblem}
    The formal deformation theory of Courant algebroid structures is
    controlled by $\hat{\MC}^\bullet(\mathcal{E})$ and the relevant
    cohomologies for a given Courant algebroid structure $m$ are
    $H^\bullet(\hat{\MC}(\mathcal{E}), \delta_m)$. Equivalently, one
    can use the Rothstein algebra $\Rothstein^\bullet(\mathcal{E})$
    with the differential $\delta_\Theta = \RothBracket{\Theta,
      \cdot}$ instead, where $\Theta = \mathcal{J}^{-1}(m)$.
\end{theorem}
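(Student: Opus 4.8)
The plan is to observe that the statement is essentially a synthesis of the lemma $\hat{\MC}^3(\Eps) = \MC^3(\Eps)$, Corollary~\ref{corollary:Isomorphism-Rothstein-Courant}, and the Maurer--Cartan analysis in \eqref{eq:MaurerCartan}, so that no genuinely new computation is needed. First I would argue that the whole deformation problem is internal to $\hat{\MC}^\bullet(\Eps)$. A Courant algebroid structure is an element $m \in \MC^3(\Eps)$ with $[m,m]=0$, and by the lemma $\MC^3(\Eps) = \hat{\MC}^3(\Eps)$, so $m$ already lies in the $\wedge$-subalgebra $\hat{\MC}^\bullet(\Eps)$, which by Corollary~\ref{corollary:Isomorphism-Rothstein-Courant} is a graded Poisson subalgebra; hence $\delta_m = [m,\cdot]$ restricts to $\hat{\MC}^\bullet(\Eps)$ and still squares to zero. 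Since $\hat{\MC}^r(\Eps) = \MC^r(\Eps)$ for $r \le 3$, the cohomologies in degrees $\le 3$ are unchanged, in particular the outer derivations $H^2$ and the infinitesimal deformations modulo formal diffeomorphisms $H^3$ of Remark~\ref{remark:DeformationTheoryI}. A formal deformation $m_t = m + \sum_{i \ge 1} m_i t^i$ has all $m_i \in \MC^3(\Eps) = \hat{\MC}^3(\Eps)$, and the order-$(k+1)$ obstruction $-\sum_{i=1}^k [m_i, m_{k+1-i}]$ lies in $\hat{\MC}^4(\Eps)$ because $\hat{\MC}^\bullet(\Eps)$ is bracket-closed; thus the obstructions are classes in $H^4(\hat{\MC}(\Eps),\delta_m)$, which is exactly \eqref{eq:MaurerCartan} read inside the subalgebra. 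Finally, the gauge freedom is generated by $\hat{\MC}^2(\Eps) = \MC^2(\Eps)$ via $\exp(\ad_\xi)$, which preserves $\hat{\MC}^\bullet(\Eps)$ again by bracket-closedness, so the equivalence relation on deformations is internal as well.

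For the equivalent Rothstein formulation I would transport everything along $\mathcal{J}$. By Corollary~\ref{corollary:Isomorphism-Rothstein-Courant}, $\mathcal{J}: \Rothstein^\bullet(\Eps) \to \hat{\MC}^\bullet(\Eps)$ is an isomorphism of graded Poisson algebras; it is degree-preserving and bijective, so $\Theta = \mathcal{J}^{-1}(m) \in \Rothstein^3(\Eps) = \Anti_\A^3\Eps \oplus (\Der(\A) \otimes_\A \Eps)$, a decomposition matching the one found in the lemma. Since $\mathcal{J}$ preserves brackets, $\RothBracket{\Theta,\Theta} = \mathcal{J}^{-1}([m,m]) = 0$, so $\delta_\Theta = \RothBracket{\Theta,\cdot}$ squares to zero, and the identity $\mathcal{J}(\RothBracket{\Theta,\phi}) = [m, \mathcal{J}(\phi)]$ exhibits $\mathcal{J}$ as an isomorphism of cochain complexes $(\Rothstein^\bullet(\Eps),\delta_\Theta) \cong (\hat{\MC}^\bullet(\Eps),\delta_m)$, whence $H^\bullet(\Rothstein(\Eps),\delta_\Theta) \cong H^\bullet(\hat{\MC}(\Eps),\delta_m)$. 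As $\mathcal{J}$ is simultaneously an isomorphism of the $\wedge$-products and is degree-preserving, the Maurer--Cartan equation and the gauge action translate verbatim, so the two deformation problems literally coincide.

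The one point I expect to draw scrutiny is the claim that the equivalence by formal diffeomorphisms is also internal to $\hat{\MC}^\bullet(\Eps)$: this is not contained in the displayed computations preceding the theorem and must be justified by locating the infinitesimal generators of these diffeomorphisms in $\hat{\MC}^2(\Eps) = \MC^2(\Eps)$ and invoking $\ad$-stability of $\hat{\MC}^\bullet(\Eps)$. Everything else is bookkeeping on top of Corollary~\ref{corollary:Isomorphism-Rothstein-Courant} and the lemma $\hat{\MC}^3(\Eps) = \MC^3(\Eps)$, together with the observation that $\mathcal{J}$ being an isomorphism of Poisson algebras automatically carries cocycles, coboundaries, obstructions and the gauge action of one side to those of the other.
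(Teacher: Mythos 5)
Your proposal is correct and follows essentially the same route as the paper, whose "proof" of this theorem is precisely the discussion preceding it: the lemma $\hat{\MC}^3(\Eps)=\MC^3(\Eps)$, the bracket-closedness of $\hat{\MC}^\bullet(\Eps)$ giving the subcomplex and placing the Maurer--Cartan obstructions in $H^4(\hat{\MC}(\Eps),\delta_m)$, and the transport along the Poisson isomorphism $\mathcal{J}$ of Corollary~\ref{corollary:Isomorphism-Rothstein-Courant}. Your added remark that the gauge action by $\exp(\ad_\xi)$ with $\xi\in\hat{\MC}^2(\Eps)=\MC^2(\Eps)$ stays inside $\hat{\MC}^\bullet(\Eps)$ is a small but legitimate completion of a point the paper leaves implicit in Remark~\ref{remark:DeformationTheoryI}.
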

\begin{remark}
    \label{remark:WhyBetter}
    In view of the examples in \eqref{eq:BadC} at the beginning of
    Section~\ref{ThePoissonAlgebraC} we see that the complex
    $\hat{\MC}^\bullet(\mathcal{E})$ typically is strictly smaller
    than $\MC^\bullet(\mathcal{E})$. Thus the map $\mathcal{J}$ is
    only an injection but not surjective in general. Hence the deformation
    problem of Courant algebroids is simplified significantly by
    replacing $\MC^\bullet(\mathcal{E})$ with
    $\hat{\MC}^\bullet(\mathcal{E})$. This shows the advantage of the
    formulation of the deformation problem using the Rothstein algebra
    in Theorem~\ref{theorem:DeformationProblem} compared to the more
    naive version in Remark~\ref{remark:DeformationTheoryI}. Moreover,
    even in the case of smooth manifolds, where the map $\mathcal{J}$
    is an isomorphism, the approach with the Rothstein algebra seems
    to be simpler thanks to the easier characterization of the
    underlying $\mathcal{A}$-modules $\Rothstein^\bullet(\mathcal{E})$
    compared to $\mathcal{C}^\bullet(\mathcal{E})$.
\end{remark}

Suppose now in the following that $\SDer^p(\A) = \Sym_\A^p \Der(A)$
for all $p \geq 0$.  This condition is for example satisfied if
$\Der(\A)$ is a finitely generated and projective module over $\A$, an
assumption which we will have to make for the Fedosov construction in
the next section. Note that this implies that also $\SDer^p(\A,
\mathcal{F}) = \Sym_\A^p \Der(\A) \otimes_\A \mathcal{F}$ for any finitely
generated, projective module $\mathcal{F}$.

\begin{lemma}\label{lemma:tau->psi}
   Let $\tau: \A \longrightarrow \Rothstein^r(\Eps)$ be a derivation
   such that $\RothBracket{\tau(a),b} = \RothBracket{\tau(b),a}$ for
   all $a,b \in \A$. Then there exists an element $\psi \in
   \Rothstein^{r+2}(\Eps)$ with $\tau(a) = \RothBracket{\psi,a}$ for
   all $a\in \A$.
\end{lemma}
\begin{proof}
    It is sufficient to prove the lemma for the case that $\tau$ is a
    map $\tau: \A \longrightarrow \Sym^p_\A\Der(\A) \otimes_\A
    \Anti_\A^k \Eps$. In this case we may view $\tau$ also as
    symmetric multi-derivation $\A^{p+1} \longrightarrow \Anti_\A^k
    \Eps$, i.e.~$\tau \in \SDer^{p+1}(\A, \Anti_\A^k \Eps)$. Since
    $\Eps$ is finitely generated and projective, this defines now an
    element $\phi \in \Sym^{p+1}_\A\Der(\A) \otimes_\A \Anti_\A^k
    \Eps$, and one easily shows that $\RothBracket{\psi,a} = \tau(a)$
    for all $a\in \A$.
\end{proof}
\begin{proposition}
    Suppose that $\Sym_\A^\bullet\Der(\A)= \SDer^\bullet(\A)$, then
    the map $\mathcal{J}:\Rothstein^\bullet(\Eps) \longrightarrow
    \MC^\bullet(\Eps)$ is an isomorphism.
\end{proposition}
\begin{proof}
    We proof the proposition by induction over $r$. The cases $r =
    0,1$ are clear. Suppose that $r \geq 1$, and let $\C\in
    \MC^{r+1}(\Eps)$. By induction we get well-defined
    $\ring{R}$-linear maps
   \[
   \mu: \Eps   \longrightarrow \Rothstein^r(\Eps)\quad\text{such that }
   \mathcal{J}(\mu(x)) = i_x \C \quad\text{for all } x \in \Eps,
   \text{ and}
   \]
   \[
   \tau:  \A \longrightarrow \Rothstein^{r-1}(\Eps)\quad\text{such that }
   \mathcal{J}(\tau(a)) = \dif_\C a \quad\text{for all } a \in \A.
   \]
   %We have to show that there exist $\phi \in \Rothstein^{r+1}(\Eps)$
   %such that $\mu(x) = \RothBracket{\phi,x}$ for all $x\in \Eps$.
   Now
   $\tau$ is a derivation and satisfies
   \[
   \mathcal{J}(\RothBracket{\tau(a),b}) =
   [\mathcal{J}(\tau(a)),b] = [\dif_\C a,b] = [[\C,a],b] =
   [[\C,b],a] = [\dif_\C b,a] = \mathcal{J}(\RothBracket{\tau(b),a}).
   \]
   Since $\mathcal{J}$ is injective, Lemma~\ref{lemma:tau->psi}
   implies that $\tau(a) = \RothBracket{\psi,a}$ for some $\psi \in
   \Rothstein^{r+1}(\Eps)$. Consider now the map $\mathsf{H} = \C -
   \mathcal{J}(\psi)$. Then
   \begin{align*}
      \sigma_\mathsf{H}(x_1,\ldots,x_{r-1})a &=
      \RothBracket{\dif_\C(x_1,\ldots,x_{r-2})a,x_{r-1}} -
      \sigma_{\mathcal{J}(\psi)}(x_1,\ldots,x_{r-1})a\\
      &=
      \RothBracket{\ldots\RothBracket{\RothBracket{\psi,a},x_1}\ldots,x_{r-1}}
      -
      \RothBracket{\RothBracket{\ldots\RothBracket{\psi,x_1},\ldots,x_{r-1}},a}
      \\
      &=0,
   \end{align*}
   hence $\mathsf{H} \in \Anti_\A^r \Eps' \otimes_\A \Eps$ and there is a
   uniquely defined element $\xi \in \Anti_\A^{r+1}\Eps$ such that
   $\mathcal{J}(\xi) = \mathsf{H}$. It follows now that $\mathcal{J}(\xi+\psi)
   = \C$.
\end{proof}
In the case that $\A = C^\infty(M)$ for a smooth manifold $M$ the two
approaches to a deformation complex for Courant algebroids given by
$\MC^\bullet(\Eps)$ and $\Rothstein^\bullet(\Eps)$, respectively, are
hence completely equivalent.

%
% Fedosov construction for $\Rothstein^\bullet(\Eps)$
%
\section{Fedosov construction for $\Rothstein^\bullet(\Eps)$}
\label{section:Fedosov}

We will now briefly present a Fedosov construction for the
Rothstein-Poisson algebra. We mainly follow here Bordemann
\cite{bordemann:1996a,bordemann:2000a} who gave a Fedosov construction for the
Grassman algebra of sections of a smooth bundle over a symplectic
manifold, and adapt it into our algebraic setting.

Throughout this section we will always assume that the module $\dermod
= \Der(\A)$ of derivations of the algebra $\A$ is finitely generated
and projective. In many situations $\dermod$ behaves then like the
$C^\infty(M)$-module $\Gamma^\infty(M)$ for a smooth manifold $M$. For
example, identities like $\Hom_\A(\dermod,\dermod) = \dermod'\otimes
\dermod$ or $(\dermod\otimes\dermod)' = \dermod'\otimes \dermod'$
still remain valid in this setting. Here and in the following, unless
otherwise stated, all tensor products will be over the algebra $\A$.

Consider now the $\A$-module
\begin{equation}\label{eq:def_of_W_0}
   \W_0 = \prod_{p=0}^\infty \Sym^p\dermod' \otimes
   \Sym^\bullet \dermod \otimes \Anti^\bullet \Eps
\end{equation}
and the $\A[[\lambda]]$-modules 
\begin{align*}
   \W &= \W_0[[\lambda]] &&\text{and} & \W \otimes \Anti &= (\W_0
   \otimes \Anti^\bullet \dermod')[[\lambda]]
\end{align*}
for a formal parameter $\lambda$. On these modules we have obvious
$\A$- or $\A[[\lambda]]$-bilinear multiplications which we denote by
$\bullet$. We define further the $\A[[\lambda]]$-linear endomorphisms
$\degs'$, $\degs$, $\degE$ and $\dega$ of $\W\otimes \Anti$, which on
homogeneous elements
\[
v \in \Sym^p\dermod' \otimes \Sym^q \dermod
\otimes \Anti^k \Eps \otimes \Anti^\ell \dermod'
\]
are given by $\degs' v = p v$, $\degs v = q v$, $\degE v = k v$ and
$\dega v= \ell v$.
Moreover, we will use the total degree
\begin{equation*}
   \Deg = 2 \lambda \frac{\partial}{\partial\lambda} + \degs' + \degs + \degE,
\end{equation*}
as well as the  degree
\begin{equation*}
   \DegR =  2 \lambda \frac{\partial}{\partial\lambda} + 2 \degs + \degE.
\end{equation*}
Note that all these maps are derivations of $\bullet$.  We will also
need the parity operator $\parity\in \End_{\A[[\lambda]]}(\W\otimes \Anti) $
with respect to the degree $\degE$, i.e.~we set $\parity(v) = (-1)^k
v$ for homogeneous elements $v\in \W\otimes \Anti$ with $\degE v = k
v$.

Consider now the operators $\calQ,\calQ^\ast,\calS \in
\End_{\A[[\lambda]]}(\W\otimes\Anti \otimes_{\A[[\lambda]]}
\W\otimes\Anti)$ given on homogeneous elements by
\begin{equation*}\begin{split}
      \calQ\big((\alpha_1\vee&\ldots\vee \alpha_k \otimes X_1
      \vee\ldots \vee X_l \otimes x_1\wedge\ldots \wedge x_m \otimes \omega
      )\\
      &\qquad\quad \otimes ( \beta_1\vee\ldots\vee \beta_r
      \otimes Y_1 \vee\ldots\vee Y_s \otimes y_1\wedge \ldots \wedge
      y_t \otimes \eta)\big) \\ 
      & = \sum_{ i=1}^k \sum_{j=1}^s
      \alpha_i(Y_j) \alpha_1\vee\elide{i}\vee \alpha_k \otimes X_1\vee
      \ldots\vee X_l \otimes x_1\wedge \ldots\wedge x_m \otimes \omega \\[-3mm]
      &\qquad\qquad\qquad\qquad \otimes \beta_1\vee\ldots\vee \beta_r \otimes Y_1
      \vee \elide{j}\vee Y_s
      \otimes y_1\wedge \ldots\wedge y_t \otimes \eta,\\[3mm]
   \end{split}\end{equation*}
\begin{equation*}\begin{split}
      \calQ^*\big((\alpha_1&\vee\ldots\vee \alpha_k \otimes X_1
      \vee\ldots \vee X_l \otimes x_1\wedge\ldots \wedge x_m\otimes\omega
      )\\
      &\qquad \otimes ( \beta_1\vee\ldots\vee \beta_r
      \otimes Y_1 \vee\ldots\vee Y_s \otimes y_1\wedge \ldots \wedge
      y_t\otimes\eta)\big) \\ 
      &=\sum_{ i=1}^r \sum_{j=1}^l
      \beta_i(X_j) \alpha_1\vee\ldots\vee \alpha_k \otimes X_1\vee
      \elide{j}
      \vee X_l \otimes x_1\wedge \ldots\wedge x_m \otimes\omega\\[-3mm]
      &\quad\qquad\qquad\qquad\qquad \otimes \beta_1\vee\elide{i} \vee
      \beta_r \otimes 
      Y_1 \vee \ldots \vee Y_s
      \otimes y_1\wedge\ldots\wedge  y_t\otimes\eta\\[3mm]
   \end{split}
\end{equation*}
and
\begin{equation*}\begin{split}
      \calS\big((\alpha_1&\vee\ldots\vee \alpha_k \otimes X_1
      \vee\ldots \vee X_l \otimes x_1\wedge\ldots \wedge
      x_m\otimes\omega)\\
      &\qquad \otimes ( \beta_1\vee\ldots\vee \beta_r \otimes Y_1
      \vee\ldots\vee Y_s \otimes y_1\wedge \ldots \wedge
      y_t\otimes \eta)\big) \\
      & = \frac{(-1)^{m-1}}{2} \sum_{ i=1}^m \sum_{j=1}^t (-1)^{i+j}
      \SP{x_i,y_j} \alpha_1\vee\ldots\vee \alpha_k \otimes X_1\vee
      \otimes  \vee X_l \otimes x_1\wedge \elide{i} \wedge x_m \otimes
      \omega \\[-3mm]
      &\quad \qquad\qquad\qquad\qquad \otimes \beta_1\vee\ldots \vee
      \beta_r \otimes Y_1 \vee \ldots \vee Y_s \otimes
      y_1\wedge\elide{j} \wedge y_t \otimes \eta,
   \end{split}\end{equation*}
as well as the $\A[[\lambda]]$-linear operator $\Delta: \W\otimes\Anti
\longrightarrow \W\otimes\Anti$, given on homogeneous elements by
\begin{equation*}\begin{split}
      \Delta \big(\alpha_1\vee&\ldots\vee \alpha_k \otimes X_1
      \vee\ldots \vee X_l \otimes x_1\wedge\ldots \wedge x_m\otimes\omega
      \big)\\ 
      &= \sum_{ i=1}^k \sum_{j=1}^l
      \alpha_i(X_j) \alpha_1\vee\elide{i}\vee \alpha_k \otimes X_1\vee
      \elide{j}\vee X_l \otimes x_1\wedge \ldots\wedge x_m\otimes\omega.
   \end{split}\end{equation*}
Let further $\calQ_{12}$, $\calQ_{23}$, $\calQ_{13}$, $\calS_{12}$,
$\calS_{23}$, $\calS_{13}, \parity_2: \bigotimes_{\A[[\lambda]]}^3
(\W\otimes\Anti) \longrightarrow \bigotimes_{\A[[\lambda]]}^3
(\W\otimes \Anti)$ be given by
\begin{align*}
   \calQ_{12} &= \calS\otimes \id & \calQ_{23} &= \id \otimes \calS
   &\calQ_{13} &= (\id \otimes \switch{}) \circ (\calQ\otimes \id
   )\circ
   (\id \otimes \switch{} )\\
   \calS_{12} &= \calS\otimes \id & \calS_{23} &= \id \otimes \calS
   & \calS_{13} &= (\id \otimes \switch{}) \circ (\calS\otimes \id
   )\circ (\id \otimes \switch{} ),
\end{align*}
where $\switch{}: (\W\otimes\Anti)\otimes(\W\otimes\Anti)
\longrightarrow (\W\otimes\Anti)\otimes(\W\otimes\Anti) $ denotes
the (non-graded) switch operator. Finally, set $\parity_2 = \id
\otimes \parity\otimes \id$.
\begin{lemma}
   The above defined operators satisfy the following properties:
   \begin{enumerate}
    \item     \begin{align*}
         \calQ \circ (\bullet\otimes\id) &= (\bullet\otimes\id
         )\circ (\calQ_{13} + \calQ_{23})\\
         \calQ \circ (\id \otimes \bullet) &= (\id \otimes\bullet)
         \circ (\calQ_{12} + \calQ_{13})\\
         \calS \circ (\bullet \otimes \id ) &= (\bullet\otimes\id
         )\circ (\calS_{23} + \calS_{13}\circ \parity_2)\\
         \calS \circ (\id \otimes \bullet) &= (\id
         \otimes\bullet) \circ (\calS_{12} + \calS_{13}\circ \parity_2).
      \end{align*}
    \item
      $\calQ_{12}$, $\calQ_{23}$, $\calQ_{13}$,
      $\calS_{12}$, $\calS_{23}$ and $\calS_{13}\circ \parity_2$ commute
      pairwise.
    \item
      The operator $\Delta$ satisfies
      \[
      \Delta\circ\bullet = \bullet \circ (\Delta\otimes \id + \calQ +
      \calQ^* + \id\otimes \Delta).
      \]
      Moreover, $\calQ$, $\calQ^*$, $\Delta\otimes \id$ and
      $\id\otimes \Delta$ commute pairwise.
   \end{enumerate}
\end{lemma}
\begin{proof}
   This follows easily from the definitions of the involved maps. %$\calQ$,
   % $\calQ^*$, $\calS$ and $\Delta$.
\end{proof}
\begin{proposition}
   Let $\kappa \in \ring{R}$, then
   \begin{equation}\label{eq:Definition_of_deformed_product}
      \bullet_\lambda^\kappa = \bullet \circ \E^{\lambda((1-\kappa)\calQ
        -\kappa \calQ^\ast+ \calS)} =
      \bullet \circ \sum_{n=0}^\infty\frac{\lambda^n}{n!}((1-\kappa)\calQ
      -\kappa \calQ^\ast+ \calS)^n
   \end{equation}
   defines an associative, $\A[[\lambda]]$-linear deformation of $\bullet$.
\end{proposition}
\begin{proof}
   For the proof consider first the case $\kappa = 0$ and set
   $\bullet_\lambda = \bullet \circ \E^{\lambda(\calQ +
     \calS)}$. Obviously, $\bullet_\lambda$ is $\A[[\lambda]]$-linear
   and $\bullet_0 = \bullet$. With the help of the previous lemma, a
   straight forward calculation shows that $\bullet_\lambda$ is
   associative. Let now $N_\kappa = \E^{\lambda \kappa\Delta}:
   \W\otimes\Anti \longrightarrow \W\otimes \Anti$ for $\kappa \in
   \ring{R}$ be the Neumaier operator \cite{neumaier:1998a}. Then
   $N_0 = \id$, and $N_\kappa$ is $\A[[\lambda]]$-linear and
   invertible with inverse $N_\kappa^{-1} = N_{-\kappa}$.
   Define $\bullet_\lambda^\kappa = N_{-\kappa}\circ
   \bullet_\lambda \circ ( N_\kappa \otimes N_\kappa)$. Clearly,
   $\bullet_\lambda^\kappa$ is also an associative deformation of
   $\bullet$, and another little calculation shows then that
   $\bullet_\lambda^\kappa$ is indeed given by Equation
   \eqref{eq:Definition_of_deformed_product}
\end{proof}
Note that the maps $\dega$, $\Deg$ and $\DegR$ are derivations of the
deformed product $\bullet^\kappa_\lambda$ as well.  

Define for homogeneous elements $\omega,\eta \in \W\otimes \Anti$ with
$\degE \omega = k \omega$, $\degE \eta = l \eta$, $\dega \omega = m
\omega$, $\dega \eta = n \eta$ their commutator by
\begin{equation}
   [\omega,\eta]_\kappa = \ad_\kappa(\omega)\eta =
   \omega\bullet_\lambda^\kappa  \eta -
   (-1)^{kl+mn} \eta \bullet_\lambda^\kappa \omega.
\end{equation}
Then the zeroth order of the commutator vanishes, thus for every
$\omega \in \W\otimes \Anti$ the map
$\frac{1}{\lambda}\ad_\kappa(\omega) = \frac{1}{\lambda}
[\omega,\cdot]_\kappa$ is a well-defined $\A[[\lambda]]$-linear
derivation of $\bullet_\lambda^\kappa$.
\begin{lemma}\label{lemma:weyl-type}
   Let $\omega,\eta \in \W_0\otimes \Anti$, then for $\kappa =
   \frac{1}{2}$ there appear only terms of odd $\lambda$-order in the
   commutator
   $[\omega,\eta]_\frac{1}{2}$.
\end{lemma}
\begin{proof}
   This follows easily by a short calculation.
\end{proof}
Define next the map $\delta: \W\otimes\Anti^k \longrightarrow
\W\otimes\Anti^{k+1}$ by the following conditions:
\begin{enumerate}
 \item $\delta (f \otimes 1\otimes 1\otimes 1 ) = 1\otimes 1\otimes
   1\otimes f$ for $f \in \dermod'$.
 \item $\delta v = 0$ for any homogeneous element $v\in
   \W\otimes\Anti$ with $\degs' v = 0$.
 \item $\delta$ is a $\dega$-graded derivation of $\bullet$.
\end{enumerate}
Similarly, define a $\dega$-graded derivation $\delta^*:
\W\otimes\Anti^k \longrightarrow \W\otimes\Anti^{k-1}$ of $\bullet$ by
\begin{equation}
   \delta^* (1 \otimes 1\otimes 1\otimes \alpha ) =  \alpha\otimes
   1\otimes  1\otimes 1
\end{equation}
for $\alpha \in \dermod'$, and as zero on elements $v\in \W$.
Moreover,  on homogeneous elements $\omega \in
\W\otimes\Anti$ with $\degs'\omega = k \omega$ and $\dega\omega = l
\omega$ we define the map  $\delta^{-1}$ by
\begin{equation}
   \delta^{-1}\omega = \begin{cases} \frac{1}{k+l} \delta^*\omega &
      \text{if } k+l \neq 0\\
      0 & \text{if } k+l = 0,
   \end{cases}
\end{equation}
and the map $\sigma$  by
\begin{equation}\label{eq.def_sigma}
   \sigma(\omega) = \begin{cases}  \omega &
      \text{if } k+l = 0\\
      0 & \text{if } k+l \neq 0.
   \end{cases}
\end{equation}

\begin{proposition}\label{prop:props_of_delta}
   We have $\delta^2 = (\delta^*)^2 = (\delta^{-1})^2 = 0$, and
   $\delta\delta^{-1} + \delta^{-1}\delta + \sigma = \id$. Moreover,
   $\delta$ is a $\dega$-graded derivation of $\bullet_\lambda^\kappa$.
\end{proposition}
\begin{proof}
   Follows by straight forward calculations.
\end{proof}

Consider now the $\A$-module $\Omega^k(\dermod,\W_0) = \Hom_\A(\Anti^k
\dermod, \W_0)$.  Since we assume that $\dermod$ is finitely generated
and projective, there is an canonical isomorphism
\begin{equation}\label{eq:Iso_WLambda0-Omega0}
   \W_0 \otimes \Anti^\bullet \dermod' \cong
   \Omega^\bullet(\dermod,\W_0)  
\end{equation}
of $\A$-modules. We further have canonical isomorphisms of
$\A[[\lambda]]$-modules
\begin{equation}\label{eq:Iso_WLambda-Omega}
   \W \otimes \Anti \cong \Omega^\bullet(\dermod,\W_0)[[\lambda]]  \cong
   \Omega^\bullet(\dermod,\W_0[[\lambda]]).
\end{equation}
In the following we may therefore identify $\W_0 \otimes \Anti$ with
$\Omega_0 = \Omega^\bullet(\dermod,\W_0)$, and $\W\otimes \Lambda$
with $\Omega = \Omega^\bullet(\dermod,\W)$.  The multiplication
$\bullet_\lambda^\kappa$ on $\W \otimes \Anti$ becomes under this
identification a multiplication $\wedge_\lambda^\kappa$ on $\Omega^\bullet$,
which is for $X_1,\ldots, X_{k+l} \in \dermod$ given by
\begin{equation}\begin{split}
      \omega \wedge_\lambda^\kappa \eta &(X_1,X_2,\ldots, X_{k+l}) \\
      &=   \sum_{\pi \in \Sym_{k,l}} \sign(\pi) 
      \omega(X_{\pi(1)},\ldots,X_{\pi(k)})\bullet_\lambda^\kappa \eta 
      (X_{\pi(k+1)},\ldots,X_{\pi(k+l)}) .
   \end{split}
\end{equation}
Choose now a torsion-free connection $\nabla^{\dermod}$ for $\dermod$,
and a metric connection $\nabla^\Eps $ for $\Eps$. This defines us
also a connection $\nabla$ on $\W_0\otimes \Anti\cong \Omega_0$ by
extension using the (non-graded) Leibniz rule.  In the following we
consider the curvature $R^{\dermod} $ of $\nabla^\dermod$ as an
element in $\W_0\otimes\Anti^2 \cong \Omega^2_0$ with $\degs'
R^{\dermod} = R^{\dermod}$, $\degs R^{\dermod} = R^{\dermod}$ and
$\degE R^{\dermod} = 0$.  Moreover, let $r^\Eps \in \Omega^2_0$ be
defined by
\begin{equation}
   \SP{r^\Eps(X,Y),x\wedge y} = \SP{R^\Eps(X,Y)x,y},
\end{equation}
where $R^\Eps$ denotes the curvature of $\nabla^\Eps$. Then $\degs'
r^\Eps = \degs r^\Eps = 0$ and $\degE r^\Eps = 2 r^\Eps$. We hence
have $\Deg R^\dermod = \DegR R^\dermod = 2 R^\dermod$ and $\Deg r^\Eps
= \DegR r^\Eps = 2 r^\Eps$, and thus
\begin{equation*}
   R = R^{\dermod} - r^\Eps
\end{equation*}
is a homogeneous element of degree 2 with respect
to the both degrees $\Deg$ and $\DegR$.

Let the map $D: \Omega^k
\longrightarrow \Omega^{k+1}$ be defined by
\begin{equation}
   D\omega(X_0,\ldots,X_k) = \sum_{i=0}^k (-1)^i (\nabla_{X_i}
   \omega)(X_0,\elide{i}, X_k).
\end{equation}
for $X_0,\ldots, X_k \in \dermod$.  Using the isomorphism
(\ref{eq:Iso_WLambda-Omega}) we may also consider $D$ as a map
$\W\otimes \Lambda^k \longrightarrow \W \otimes \Lambda^{k+1}$.
\begin{lemma}\label{lemma:rules_for_D}
   \begin{enumerate}
    \item
      $D$ is a $\dega$-graded derivation of $\bullet$ and
      $\bullet_\lambda^\kappa$. 
    \item
      We have $D \delta + \delta D = 0$, $\delta R = 0$ and $D R = 0$.
    \item
      We have $D^2 = \frac{1}{2}[D,D] = \frac{1}{\lambda}\ad_\kappa(R)$.
   \end{enumerate}
\end{lemma}  
\begin{proof}
    That $D$ is a derivation of $\bullet$ follows immediately from the
    derivation properties of $\nabla$. By a calculation on homogeneous
    elements one further shows that
   \[
   [D \otimes \id + (-1)^k \id\otimes
   D,(1-\kappa) \calQ - \kappa\calQ^\ast +\calS](\omega\otimes\eta) =
   0
   \]
   for all $\omega\in \W \otimes \Lambda$ and $\eta \in \W \otimes
   \Lambda^k$. Using this, the derivation property of $D$ with respect
   to $\bullet_\lambda^\kappa$ follows then immediately from the
   definition of $\bullet_\lambda^\kappa$ by
   Equation~(\ref{eq:Definition_of_deformed_product}).  The rest is
   now again a straight forward calculation, where one can restrict
   oneself to generators of the algebra $\W_0\otimes\Anti$ since all
   involved maps are derivations of $\bullet$.  Especially, $\delta R
   = 0$ follows from the first Bianchi identity for $R^\dermod$, while
   $DR = 0$ follows from the second Bianchi identity for both
   $R^\dermod$ and $r^\Eps$.
\end{proof}

Consider now for some $r_\kappa\in \W\otimes \Anti^1$ with $\parity r_\kappa =
r_\kappa$ the map
\begin{equation}
   \fedosovD_\kappa = -\delta + D +\frac{1}{\lambda}\ad_\kappa(r_\kappa).
\end{equation}
\begin{proposition}\label{prop:props_of_FedosovD}
   Let $\fedosovD_\kappa$ and $r_\kappa$ be defined as above, then it follows
   that
   \begin{equation}\label{eq:prop_FD1}
      \fedosovD_\kappa^2 = \frac{1}{\lambda}\ad_\kappa(-\delta r_\kappa +
      D r_\kappa + R +\frac{1}{\lambda} r_\kappa
      \bullet_\lambda^\kappa r_\kappa)
   \end{equation}
   and
   \begin{equation}\label{eq:prop_FD2}
      \fedosovD_\kappa(-\delta r_\kappa + D r_\kappa + R
      +\frac{1}{\lambda} r_\kappa 
      \bullet_\lambda^\kappa r_\kappa) = 0.
   \end{equation}
\end{proposition}
\begin{proof}
   These are immediate consequences of Lemma~\ref{lemma:rules_for_D} .
\end{proof}

\begin{theorem}\label{theorem:existence_of_r}
   There exists a unique element $r_\kappa \in \W\otimes \Anti^1$ with
   $\parity r_\kappa = r_\kappa$ such that
  \begin{align}\label{eq:prop_of_r}
     \delta^{-1} r_\kappa &= 0 & \text{and} &&\delta r_\kappa &= D
     r_\kappa + R +\frac{1}{\lambda} r_\kappa \bullet_\lambda^\kappa
     r_\kappa.     
  \end{align}
  For this $r_\kappa$, the Fedosov derivation $\fedosovD_\kappa =
  -\delta^{-1} + D + \frac{1}{\lambda} \ad_\kappa(r_\kappa)$ satisfies
  $\fedosovD_\kappa^2=0$. Explicitly, as a formal sum in the total degree
  $\Deg$, $r_\kappa$ is recursively defined by $r_\kappa^{(0)} =
  r_\kappa^{(1)} = r_\kappa^{(2)} = 0$, $r_\kappa^{(3)} =
  \delta^{-1}R$, and  for $ k\geq 1$ by
  \begin{equation}\label{eq:recursion_for_r}
    r_\kappa^{(k+3)} = \delta^{-1}\bigg(D r_\kappa^{(k+2)} +
    \frac{1}{\lambda}\sum_{l=1}^{k-1} r_\kappa^{(l+2)}\bullet_\lambda^\kappa
    r_\kappa^{(k+2-l)}\bigg).
 \end{equation}
 Moreover, $r_\kappa$ satisfies $\DegR r_\kappa^{(k)} = 2
 r_\kappa^{(k)}$ for all $k \geq 0$. 
\end{theorem}
\begin{proof}
   Since we suppose that $r_\kappa \in \W\otimes \Anti^1$ and
   $\delta^{-1} r_\kappa = 0$, it follows from
   Proposition~\ref{prop:props_of_delta} that $r_\kappa = \delta^{-1}
   \delta r_\kappa$, and hence $r_\kappa = \delta^{-1}(Dr_\kappa + R
   +\frac 1 \lambda r_\kappa \bullet^\kappa_\lambda r_\kappa)$. This
   proves uniqueness and gives us also the recursion formula
   \eqref{eq:recursion_for_r}. To show that the thereby defined
   element $r_\kappa$ indeed satisfies \eqref{eq:prop_of_r} we refer
   to \cite[Theorem 1.1]{bordemann:2000a}. Finally, that
   $r^{(k)}_\kappa$ is of degree 2 with respect to $\DegR$ follows now
   easily by induction over $k$.
\end{proof}

\begin{remark}\label{remark:r-weyl}
  Since all $r^{(k)}_\kappa$ have odd $\dega$-degree and even
  parity $\parity r_\kappa^{(k)} = r_\kappa^{(k)}$, we can write the
  recursion formula (\ref{eq:recursion_for_r}) also as
 \begin{equation*}
    r_\kappa^{(k+3)} = \delta^{-1}\bigg(D r_\kappa^{(k+2)} +
    \frac{1}{2\lambda}\sum_{l=1}^{k-1} [r_\kappa^{(l+2)},
    r_\kappa^{(k+2-l)}]_\kappa\bigg). 
  \end{equation*}
  With the help of Lemma~\ref{lemma:weyl-type} follows then that for
  $\kappa= \frac{1}{2}$ the element $r_{\scriptscriptstyle{\frac{1}{2}}}$ has
  only contributions of even $\lambda$-degree.
\end{remark}

Let for $s\in\mathbb{N}$ the subspaces $\W_{(s)} \otimes \Anti \subset
\W\otimes \Anti$ be defined as
\begin{equation}
   \W_{(s)} \otimes \Anti = \prod_{n\geq s\, }\bigoplus_{\, \ell+p = n}
   \lambda^\ell \Sym^p 
   \dermod' \otimes \Sym^\bullet \dermod \otimes \Anti^\bullet\Eps
   \otimes \Anti^\bullet \dermod',
\end{equation}
so $\W\otimes \Anti  = \W_{(0)}\otimes\Anti \supset \W_{(1)}\otimes \Anti
\supset \W_{(2)}\otimes \Anti \supset \ldots$. 
\begin{lemma}\label{lemma:Properties_of_W_(k)}
   \begin{enumerate}
    \item $ (\W_{(s)} \otimes \Anti) \bullet_\lambda^\kappa (\W_{(t)} \otimes
      \Anti) \subseteq \W_{(s+t)} \otimes \Anti\:$ for all $s,t \geq 0$.
    \item Let $r_\kappa$ be the element defined in
      Theorem~\ref{theorem:existence_of_r}, then $r_\kappa^{(3)} =
      \delta^{-1} R \in \W_{(1)} \otimes \Anti^1$, and
      $r_\kappa^{(3+k)} \in \W_{(2)} \otimes \Anti^1$ for all $k\geq
      1$.
    \item
      $\sigma(\W_{(s)} \otimes \Anti) \subseteq \lambda^s
      \Rothstein^\bullet(\Eps)[[\lambda]]$ for all $s \geq 0$.
   \end{enumerate}
\end{lemma}
\begin{proof}
   The first part is an immediate consequence of the definition of
   $\bullet_\lambda^\kappa$, while the second part follows easily by
   induction over $k$ using the recursion formula
   \eqref{eq:recursion_for_r}. The last part is clear.
\end{proof}
Consider now the operator $A_\kappa = [\delta^{-1},
D+\frac{1}{\lambda}\ad_\kappa(r_\kappa)]$, where the element $r_\kappa\in
\W_{(1)}\otimes \Anti^1 $ is defined by Theorem
\ref{theorem:existence_of_r}.
\begin{proposition}
   \begin{enumerate}
    \item $A_\kappa$ commutes with $\DegR$ and satisfies
      $A_\kappa(\W_{(s)} \otimes \Anti)\subseteq \W_{(s+1)} \otimes
      \Anti$ for all $s \geq 0$.
    \item
      We have a well-defined
      operator
      \[ \frac{1}{\id-A_\kappa} = \sum_{n=0}^\infty A_\kappa^n.\]
    \item Let the operator $\fedosovD_\kappa^{-1}$ be defined by
      \begin{equation}
         \fedosovD_\kappa^{-1} = -\delta^{-1} \frac{1}{\id -A_\kappa},
      \end{equation}
      then
      \begin{equation}\label{eq:D_homotopy}
         \fedosovD_\kappa\fedosovD_\kappa^{-1} +
         \fedosovD_\kappa^{-1}\fedosovD_\kappa + \frac{1}{\id- A_\kappa}
         \sigma = \id. 
      \end{equation}
    \item We have $\W \subseteq \ker \fedosovD_\kappa^{-1}$.
   \end{enumerate}
\end{proposition}
\begin{proof}
    The first part of the Proposition follows by a short
    calculation. For the second part, note that $A_\kappa$ raises the
    total degree at least by one, hence $\sum_{n=0}^\infty A_\kappa^n$
    is a well defined formal power series in the total degree. The
    third part as again an easy calculation, we refer here to
    \cite[Sect.~4]{waldmann:2007a} for the details. The last part
    finally follows from $[\delta^{-1},A_\kappa] = 0$.
\end{proof}
The next corollary is again standard for the Fedosov construction.
\begin{corollary}
   \begin{enumerate}
    \item Let $w \in \W$, then $\fedosovD_\kappa w = 0$ if and only if $w =
      \frac{1}{\id - A_\kappa} \sigma(w)$.
    \item The projection $\sigma$ defined by Equation
      \eqref{eq.def_sigma} restricts to an isomorphism
      \begin{equation}
         \sigma: \ker \fedosovD_\kappa \cap \W \longrightarrow
         \Rothstein^\bullet(\Eps)[[\lambda]] 
      \end{equation}
      with inverse
      \begin{equation}
         \tau_\kappa =  \frac{1}{\id -A_\kappa}.
      \end{equation}
   \end{enumerate}
\end{corollary}
Since $\fedosovD_\kappa$ is a derivation of $\bullet_\lambda^\kappa$, the
subspace $\ker \fedosovD_\kappa$ is closed under
$\bullet_\lambda^\kappa$. Further, $\W$ is also closed under
$\bullet_\lambda^\kappa$ and hence the subspace $\ker \fedosovD_\kappa \cap
\W$ as well. We can therefore make the following definition.
\begin{definition}\label{def:fedosov-star-product}
   The Fedosov star product on
   $\Rothstein^\bullet(\Eps)[[\lambda]]$ is 
   defined by
   \begin{equation}
      \phi\star_\kappa \psi = \sigma\big(
      \tau_\kappa(\phi)\bullet_\lambda^\kappa
      \tau_\kappa(\psi)\big) 
   \end{equation}
   for $\phi$, $\psi \in \Rothstein^\bullet(\Eps)[[\lambda]]$.
\end{definition}
\begin{remark}
   Since $\DegR = 2 \lambda \frac{\partial}{\partial\lambda} + 2 \degs +
   \degE$ commutes with $\sigma$ and $\tau_\kappa$, $\DegR$ is a
   derivation of $\star_\kappa$.  It follows that for $\phi,\psi \in
   \Rothstein^\bullet(\Eps)$ the star product $\phi\star \psi$ is
   given by a finite sum.
\end{remark}
One easily shows that $\star_\kappa$ is an associative deformation of
the graded commutative algebra $\Rothstein^\bullet(\Eps)$.
Hence the first order term of the $\star_\kappa$-commutator defines a
graded Poisson bracket on $\Rothstein^\bullet(\Eps)$.  In order to
calculate this bracket, which we expect to be the Rothstein-Poisson
bracket for the connection $\nabla^\Eps$, we will need the following
lemma.
\begin{lemma}\label{lemma:star_on_generators}
   For generators $a,b\in \A$, $x,y \in \Eps$ and $X,Y\in \dermod$ of
   $\Rothstein^\bullet(\Eps)$ we have the formulas
   \begin{align*}
      a \star_\kappa b &= ab,\qquad\qquad a \star_\kappa x = ax = x
      \star_\kappa a,&
      x \star_\kappa y &=  x\wedge y + \frac{\lambda}{2}\SP{x,y},\\[3mm]
      a \star_\kappa X &= a X + \lambda(1-\kappa) X(a), &
      X \star_\kappa a &= a X - \lambda\kappa X(a),\\[3mm]
      x \star_\kappa X &= X \otimes x + \lambda(1-\kappa)
      \nabla^\Eps_X x,& X \star_\kappa x &= X \otimes x -\lambda\kappa
      \nabla^\Eps_X x,
   \end{align*} 
   and
   \[
   X \star_\kappa Y = X \vee Y + \lambda(1-\kappa) \nabla^\dermod_Y
   X - \lambda\kappa \nabla^\dermod_X Y - \frac{\lambda}{2} r^\Eps(X,Y) -
   \lambda^2 (1-\kappa)\kappa\, s(X,Y),
   \]
   where $s(X,Y) = s(Y,X)$ is the trace of the endomorphism $Z \mapsto
   \nabla_{\nabla_Z X} Y$.
\end{lemma}
\begin{proof}
   First note that $\star_\kappa$ is $\DegR$-homogeneous, hence there
   can be no higher order terms as the ones given in the formulas
   above.  If one recalls that only that terms in
   $\tau(\phi)\bullet_\lambda^\kappa \tau(\psi)$ which belong to
   $(\W\setminus \W_{(2)}) \otimes \Anti$ can contribute to the first
   $\lambda$-order of $\phi\star_\kappa\psi =
   \sigma(\tau_\kappa(\phi)\bullet_\lambda^\kappa \tau_\kappa(\psi))$,
   the identification of the first order terms of the star product
   reduces to a few straight forward
   calculations. (Lemma~\ref{lemma:Properties_of_W_(k)} is very
   helpful here.) With a similar consideration we can calculate the
   second order term of $X \star_\kappa Y$.
\end{proof}
Using the last lemma it follows now immediately that the first order
term of the $\star_\kappa$-commutator coincides on generators and
hence on the whole algebra $\Rothstein^\bullet(\Eps)$ with the
Rothstein-Poisson bracket defined in
Theorem~\ref{theorem:RothsteinPoissonBracket}.  For $\phi\in
\Rothstein^r(\Eps)$ and $\psi\in \Rothstein^s(\Eps)$ we therefore have
\begin{equation}
   [\phi,\psi]_{\star_\kappa}= \phi \star_\kappa \psi - (-1)^{rs}
   \psi\star_\kappa \phi = \lambda \RothBracket{\phi,\psi} + \ldots
\end{equation}
for all parameter values $\kappa \in \ring{R}$.  We summarize our
results in the following main theorem of this section.
\begin{theorem}
  The Fedosov star product $\star_\kappa$ is a deformation
  quantization of the graded Poisson algebra
  $(\Rothstein^\bullet(\Eps),\RothBracket{\cdot,\cdot},\wedge)$.
\end{theorem}
  
We finish this section with the following
lemma, which says that for $\kappa = \frac{1}{2}$ the Fedosov star
product is of ``Weyl-type''.
\begin{lemma}\label{lemma:Fedosov-Weyl-Typ}
  Let $\phi \in \Rothstein^r(\Eps)$ and $\psi \in \Rothstein^s(\Eps)$,
  then for $\kappa = \frac{1}{2}$ there appear only terms of odd
  $\lambda$-order in the graded commutator
  $[\phi,\psi]_{\star_{\frac 1 2}}  =
  \phi\star_{\scriptscriptstyle{\frac{1}{2}}} \psi - (-1)^{rs} \psi
  \star_{\scriptscriptstyle{\frac{1}{2}}} \phi$.
\end{lemma}
\begin{proof}
   This follows from Lemma~\ref{lemma:weyl-type} and Remark~\ref{remark:r-weyl}.
\end{proof}

%
% Quantization of Courant-Algebroids
%
\section{Quantization of Courant-Algebroids}
\label{section:QuantizationOfCourantAlgebroids}

The star product on $\Rothstein^\bullet(\Eps)$ can now be used to
define the quantization of Courant algebroids. Let
$\Rothstein^\bullet_-(\Eps) = \bigoplus_{r=0}^\infty
\Rothstein^{2r+1}(\Eps)$ denote the subspace of odd elements. The
idea is to consider a formal sum $\Theta = \Theta_0 + \lambda
\Theta_1 + \cdots \in \Rothstein^\bullet_-(\Eps)[[\lambda]]$ such
that $\Theta_0$ defines a Courant structure, and then using the star
product to generate conditions on the higher order terms. The first
guess is hence the following:
\begin{definition}[Quantization of Courant algebroids, first
  version]\label{def:quantisierungI}
  Let $\Eps$ be a Courant algebroid over $\A$ with Courant structure
  $m \in \MC^3(\Eps)$, and let $\Theta_0\in \Rothstein^3(\Eps)$ be the
  corresponding homological element in the Rothstein-Poisson algebra,
  i.e.\ $\mathcal{J}(\Theta_0) = m$. A quantization of the Courant
  structure on $\Eps$ is a formal series $\Theta = \Theta_0 +\lambda
  \Theta_1 + \cdots $ with $\Theta_k \in \Rothstein_-^\bullet(\Eps)$
  for all $k \geq 0$, such that $\Theta \star_\kappa \Theta = 0$.
\end{definition}
In Section~\ref{section:Lie-Rinehart-Pairs} we will see that the
double of a Lie-Rinehart pair provides a non-trivial example where this
quantization condition is already fulfilled for $\Theta_0$, i.e.\
where $\Theta_0 \star_\kappa \Theta_0 = 0$ without higher order
corrections.

In general there will be obstructions for the existence of a
quantization. Write the star product as a formal sum
\[
\phi \star_\kappa \psi = \phi\wedge \psi + \sum_{k=1}^\infty
\lambda^k C^\kappa_k(\phi,\psi)
\]
with $\phi,\psi \in \Rothstein^\bullet(\Eps)$. This defines us for $k \geq 1$ the
graded and 
$\ring{R}[[\lambda]]$-bilinear maps
\[
C^\kappa_k:
\Rothstein^r(\Eps)[[\lambda]] \times \Rothstein^s(\Eps)[[\lambda]]
\longrightarrow \Rothstein^{r+s-2k}(\Eps)[[\lambda]],
\]
where the specified grading is implied by the fact that $\DegR$ is a
derivation of $\star_\kappa$.  Let now $\Theta = \Theta_0 +\lambda
\Theta_1 + \cdots \in \Rothstein_-^\bullet(\Eps)[[\lambda]]$ with
$\Theta_0 \in \Rothstein^3(\Eps)$ satisfying
$\RothBracket{\Theta_0,\Theta_0}=0$. We have up to order three in
$\lambda$ that
\begin{equation}
   \begin{split}\label{eq:Theta-star-Theta}
      \Theta \star_\kappa \Theta &= \lambda^2
      \big(\RothBracket{\Theta_0,\Theta_1} + C^\kappa_2(\Theta_0,\Theta_0)\big) \\
      &\qquad + \lambda^3 \big(\RothBracket{\Theta_0,\Theta_2} +
      \frac{1}{2}\RothBracket{\Theta_1,\Theta_1} +
      C_2^\kappa(\Theta_0,\Theta_1) + C_2^\kappa(\Theta_1,\Theta_0) +
      \C_3^\kappa(\Theta_0,\Theta_0) \big) + \cdots\\
      &= \lambda^2 (\delta_{\Theta_0} \Theta_1 +
      C^\kappa_2(\Theta_0,\Theta_0)) + \lambda^3 (\delta_{\Theta_0}
      \Theta_2 + T_3)+ \cdots
   \end{split}
\end{equation}
with $T_3 = \frac{1}{2}\RothBracket{\Theta_1,\Theta_1} +
C_2^\kappa(\Theta_0,\Theta_1) + C_2^\kappa(\Theta_1,\Theta_0) +
\C_3^\kappa(\Theta_0,\Theta_0)$. Recall now that
$[\Theta,[\Theta,\Theta]_{\star_\kappa}]_{\star_\kappa} = 0$ for an
arbitrary odd element $\Theta \in
\Rothstein_-^\bullet(\Eps)[[\lambda]]$ by associativity of
$\star_\kappa$. In third $\lambda$-order this yields
$\delta_{\Theta_0} C_2^\kappa(\Theta_0,\Theta_0) = 0$, hence
$\Theta\star_\kappa \Theta=0$ can be satisfied up to order $2$, if and
only if the closed element $C_2^\kappa(\Theta_0,\Theta_0) $ is
exact. As a consequence of the grading, solutions of the equation
$\delta_{\Theta_0} \Theta_1 = -C_2^\kappa(\Theta_0,\Theta_0)$ are, up
to cocycles, elements $\Theta_1 \in \Rothstein^1(\Eps) = \Eps$.
The obstructions for solving this equation are therefore in
$H^2(\Rothstein(\Eps),\delta_{\Theta_0}) \cong H^2_m(\MC(\Eps))$.

Suppose now that we have already found $\Theta_1 \in \Eps$ with
$\delta_{\Theta_0} \Theta_1 = -C_2^\kappa(\Theta_0,\Theta_0)$. In this
case the fourth $\lambda$-order in
$[\Theta,[\Theta,\Theta]_{\star_\kappa}]_{\star_\kappa} = 0$ shows
then that also $\delta_{\Theta_0} T_3 = 0$. However, since $\Theta_1
\in \Eps$ we have $T_3 \in \Rothstein^0(\Eps) = \A$, so the equation
$\delta_{\Theta_0} \Theta_2 = -T_3$ has a solution if and only if $T_3
= 0$.  In general we therefore can not expect that a quantization as
proposed in Definition~\ref{def:quantisierungI} exists.

In fact, Example~\ref{example:semisimpleLie} provides a whole class of
Courant algebroids for which such a quantization does not exist:
Consider a finite-dimensional semi-simple Lie algebra $\mathfrak{g}$
over a field $\kk$ with characteristic zero. In this case $\Der(\A) =
\{0\}$, so the Rothstein-Poisson algebra is just the Grassmann algebra
$\Anti^\bullet\mathfrak{g}$ of the Lie algebra $\mathfrak{g}$, and the
Fedosov star product is given by
\[
\xi \star \zeta = \wedge\circ \E^{\lambda \calS}(\xi \otimes
\zeta)\quad \text{for }\xi,\zeta \in \Anti^\bullet \mathfrak{g}.
\]
One easily shows that the element $\Theta_0\in \Anti^3 \mathfrak{g}$
which corresponds to the Lie algebra structure on $\mathfrak{g}$ is
given by
\begin{equation}\label{eq:Theta_for_LieAlg}
\Theta_0 = -\frac{1}{6}h^{il} h^{jm} c_{lm}^k e_i\wedge e_j \wedge
e_k,
\end{equation}
where $c_{ij}^k$ are the structure constants of $\mathfrak{g}$ with
respect to a basis $e_1,\ldots,e_n \in \mathfrak{g}$. Note that
$\star$ in this case is of ``Weyl-type'', meaning the star commutator
of elements $\xi,\zeta \in \Anti^\bullet\mathfrak{g}$ contains only
terms of odd $\lambda$-order (see also
Lemma~\ref{lemma:Fedosov-Weyl-Typ}).  Hence for $\Theta = \Theta_0 +
\lambda \Theta_1 + \cdots \in \Rothstein_-^\bullet(\Eps)[[\lambda]]$
we have $C_2(\Theta_0,\Theta_0) = C_2(\Theta_0,\Theta_1) +
C_2(\Theta_1,\Theta_0) = 0$, and Equation \eqref{eq:Theta-star-Theta}
reads as
\[
\Theta \star \Theta = \lambda^2
\RothBracket{\Theta_0,\Theta_1} + \lambda^3
\big(\delta_{\Theta_0} \Theta_2 + \tfrac{1}{2}\RothBracket{ \Theta_1,\Theta_1} +
C_3(\Theta_0,\Theta_0) \big) +\cdots.
\]
In order to achieve $\Theta\star\Theta = 0$ up to order two,
$\Theta_1$ has to satisfy $\RothBracket{\Theta_0,\Theta_1}= 0$, i.e.\
$m(\Theta_1,\cdot) = \ad(\Theta_1) = 0$. But for a semi-simple Lie
algebra this implies that $\Theta_1 = 0$. Hence in the third
$\lambda$-order of $\Theta\star\Theta$ only remains
$\delta_{\Theta_0}\Theta_2 + C_3(\Theta_0,\Theta_0)$. But a little
computation shows that
\[
C_3(\Theta_0,\Theta_0) = \frac{1}{3! 2^3} h^{il} c_{ik}^m c_{lm}^k =
\frac{1}{48} h^{il} h_{il} = \frac{1}{48}\delta_i^i =
\frac{\dim(\mathfrak{g})}{48},
\]
where we used that $h_{ij} = \mathrm{tr}(\ad(e_i)\circ \ad(e_j)) =
c_{ik}^l c_{jl}^k$ for the Killing form.  Since
$\delta_{\Theta_0}\Theta_2 $ has no contribution in $\DegR$-degree
zero, it follows that $\mathfrak{g}$ admits no quantization in the
sense of Definition~\ref{def:quantisierungI}.  

It therefore seems that our Definition~\ref{def:quantisierungI} for
the quantization of Courant algebroids is too restrictive. Hence the
following weaker definition might be more useful.
\begin{definition}[Quantization of Courant Algebroids, second
   version]\label{def:quantisierungII}
   Let $\Eps$ be a Courant algebroid over $\A$ with Courant structure
   $m \in \MC^3(\Eps)$, and let $\Theta_0\in \Rothstein^3(\Eps)$ be the
   associated homological element in the Rothstein-Poisson algebra. A
   quantization of the Courant structure on $\Eps$ is a formal series
   $\Theta = \Theta_0 +\lambda \Theta_1 + \ldots \in
   \Rothstein_-^\bullet(\Eps)[[\lambda]]$ such that $\Theta
   \star_\kappa \Theta$ is central, i.e such that
   $\ad_{\star_\kappa}(\Theta\star_\kappa \Theta) = [\Theta\star_\kappa
   \Theta,\,\cdot\,]_{\star_\kappa} = 0$.
\end{definition}

For a quantization $\Theta \in
\Rothstein_{-}^\bullet(\Eps)[[\lambda]]$ in this sense the map
$\ad_{\star_\kappa}(\Theta):\Rothstein^\bullet(\Eps)[[\lambda]]
\longrightarrow \Rothstein^\bullet(\Eps)[[\lambda]]$ has square zero
and defines a deformation of the differential $\delta_{\Theta_0} =
\RothBracket{\Theta_0,\cdot}$.  Moreover, the star product is also
well-defined in the cohomology of the deformed differential, since by
associativity of $\star_\kappa$ the map $\frac{1}{\lambda}
\ad_{\star_\kappa}(\Theta)$ is a graded derivation of $\star_\kappa$.

Let us consider again Example~\ref{example:semisimpleLie}, i.e.~$\Eps
= \mathfrak{g}$ is a finite-dimensional, semisimple Lie algebra with
Killing form, and $\Theta \in \Anti^3 \Eps$ the corresponding element
in the Rothstein algebra as defined in
Equation~(\ref{eq:Theta_for_LieAlg}).  Then $\Theta\star\Theta =
\lambda^3
C_3(\Theta,\Theta) \in \mathbb{k}[[\lambda]]$ and hence
$\ad_\star(\Theta\star\Theta) = 0$. For $\phi \in \Anti^k \Eps$ we
further get that
\begin{align*}
   \frac 1 \lambda \ad_{\star}(\Theta)\phi &= \frac{1}{\lambda} \big( \wedge \circ
   \E^{\lambda \calS} (\Theta \otimes \phi) - (-1)^{3k}\wedge
   \circ \E^{\lambda
     \calS} (\phi  \otimes \Theta) \big)\\
   & = \frac 1 \lambda \sum_{n=0}^\infty \frac{\lambda^n}{n!}
   \big(\wedge \circ \calS^n (\Theta
   \otimes \phi) - (-1)^{3k}  \wedge \circ \calS^n (\phi \otimes
   \Theta)\big)\\
   &= \frac 1 \lambda \sum_{n=0}^\infty \frac{\lambda^n}{n!}
   \big(\wedge \circ \calS^n (\Theta
   \otimes \phi) - (-1)^n  \wedge \circ \calS^n (\Theta \otimes \phi)\big)\\
     % & = \frac 1 \lambda \sum_{n=0}^\infty \lambda^n \wedge( \circ
     % \calS^n (\Thet%a
     % \otimes \phi - (-1)^{n} \circ \calS^n (\Theta \otimes \phi)))\\
   & = 2 \wedge \circ  \sum_{n=0}^\infty \frac{\lambda^{2n}}{(2n+1)!}
   \calS^{2n+1}(\Theta \otimes \phi)\\
   &  = \RothBracket{\Theta,\phi} + \frac{\lambda^2 }{3}
   \calS^3(\Theta \otimes \phi).
\end{align*}

\begin{example}[$\mathfrak{so}(3)$]
Let $\Eps = \mathbb{R}^3$ with the standard scalar product, and let
$\Theta = - e_1 \wedge e_2 \wedge e_3 \in \Rothstein^3(\Eps) =
\Anti^3 \Eps$. Then $\RothBracket{\Theta,\Theta} = 0$ and we find for
the derived bracket
\begin{align*}
   \RothBracket{\RothBracket{e_1,\Theta},e_2} &= \RothBracket{-e_2\wedge
   e_3,e_2} = e_3 \\
   \RothBracket{\RothBracket{e_2,\Theta},e_3} &= \RothBracket{e_1\wedge
   e_3,e_3} = e_1 \\
   \RothBracket{\RothBracket{e_3,\Theta},e_1} &= \RothBracket{-e_1\wedge
   e_2,e_1} = e_2,
\end{align*}
hence $\mathbb{R}^3$ together with the derived bracket is the Lie algebra
$\mathfrak{so}(3)$. For the deformed differential $\frac 1 \lambda
\ad_{\star}(\Theta)\phi = \RothBracket{\Theta,\phi} + \frac{\lambda^2
}{3} \calS^3(\Theta \otimes \phi)$ we get for $x,y,z \in
\mathbb{R}^3$ that
\begin{align*}
   \frac 1 \lambda \ad_{\star} (\Theta)x &= 
   \RothBracket{-e_1\wedge e_2 \wedge e_3, x} = -x^1
   e_2\wedge e_3 - x^2 e_3 \wedge e_1 - x^3 e_1
   \wedge e_2,\\
   \frac 1 \lambda \ad_{\star} (\Theta)(x\wedge y) &= 
   \RothBracket{-e_1\wedge e_2 \wedge
   e_3 , x \wedge y} \\
   &= (-x^1 e_2\wedge e_3 - x^2 e_3 \wedge e_1 - x^3 e_1
   \wedge e_2)\wedge y\\
   &\qquad\qquad- x \wedge (-y^1 e_2\wedge e_3 - y^2 e_3 \wedge e_1 -
   y^3 e_1
   \wedge e_2)\\
   &= (-x^1 y^1  - x^2 y^2  - x^3 y^3 )e_1\wedge e_2\wedge e_3\\
   &\qquad\qquad - (-x^1 y^1 - x^2 y^2 - x^3 y^3) e_1
   \wedge e_2 \wedge e_3)\\
   &= 0,
   \intertext{and}
   \frac 1 \lambda \ad_{\star} (\Theta)(x\wedge y \wedge z) &= \frac
   {\lambda^2} 3  \calS^3 (-e_1\wedge e_2 \wedge
   e_3)(x\wedge y
   \wedge z) = \frac{\lambda^2}{4} \det(x,y,z).
\end{align*}
This leads  now to the following diagram, where $\mathfrak{g} =
\mathfrak{so}(3)$,  $C^n =
\bigoplus_{2p+k=n} \lambda^p \Anti^k \mathfrak{g}$ and $d = \frac 1
\lambda \ad_{\star}(\Theta)$.
\begin{equation*}
\xymatrix{ C^0 \ar@{}[d]|{\parallel} \ar[r]^d & C^1
  \ar@{}[d]|{\parallel} \ar[r]^d & C^2 \ar@{}[d]|{\parallel} \ar[r]^d
  & C^3 \ar@{}[d]|{\parallel} \ar[r]^d & C^4 \ar@{}[d]|{\parallel}
  \ar[r]^d & C^5 \ar@{}[d]|{\parallel} \ar[r]^d & C^6
  \ar@{}[d]|{\parallel} \ar[r]^d & C^7 \ar@{}[d]|{\parallel} \\
  \mathbb{R}\ar[r]^{0} & \mathfrak{g}\ar[r]^(0.4){\sim} & \Anti^2
  \mathfrak{g}\ar[r]^0 \ar@{}[d]|{\bigoplus} & \Anti^3 \mathfrak{g}
  \ar@{}[d]|{\bigoplus} \ar[ddr]^(0.32){\sim} | \hole
  \ar[r]^0 & 0 \ar@{}[d]|{\bigoplus}  &&&\\
  & & \lambda \mathbb{R} \ar[r]^0 & \lambda \mathfrak{g}
  \ar[r]^(0.26){\sim} & \lambda \Anti^2 \mathfrak{g} \ar[r]^0
  \ar@{}[d]|{\bigoplus} & \lambda \Anti^3 \mathfrak{g}
  \ar[ddr]^(0.32){\sim} | \hole
  \ar@{}[d]|{\bigoplus} %\ar[ddr]^(0.32){\sim} | \hole
  \ar[r]^0 & 0 \ar@{}[d]|{\bigoplus}  \\
  & & & & \lambda^2 \mathbb{R} \ar[r]^0 & \lambda^2 \mathfrak{g}
  \ar[r]^(0.26)\sim &\lambda^2 \Anti^2 \mathfrak{g} \ar[r]^0
  \ar@{}[d]|{\bigoplus} & \lambda^2 \Anti^3 \mathfrak{g}
  \ar@{}[d]|{\bigoplus} \\
  & & & & & & \lambda^3 \mathbb{R} \ar[r]^0 & \lambda^3 \mathfrak{g}
  }
\end{equation*}
It follows for the cohomology of this complex that
\begin{align*}
   H^0 &= 0, & H^1 &= 0, & H^2 &= \lambda \mathbb{R}, & H^3 &= 0, & H^4 &= 0,\\
   H^5 &= \lambda H^3 = 0, & H^6 &= \lambda H^4 = 0, & \ldots
\end{align*}
Note the difference to the Lie algebra cohomology of
$\mathfrak{so}(3)$ defined via the Chevalley-Eilenberg differential,
where we have
\begin{align*}
   H^1_{CE}(\mathfrak{so}(3)) &= 0, &  H^2_{CE}(\mathfrak{so}(3)) &= 0, &
   H^3_{CE}(\mathfrak{so}(3)) &= \mathbb{R}, &
   H^4_{CE}(\mathfrak{so}(3)) &= 0, & \ldots
\end{align*}
\end{example}
\begin{example}[$\mathfrak{sl}(2)$]
Let again $\Eps = \mathbb{R}^3$, but now with the scalar product defined
by the matrix
\[
\begin{pmatrix}
   0 & 0 & 1\\ 0 & 2 & 0 \\ 1 & 0 & 0
\end{pmatrix}.
\]
As before we set $\Theta = - e_1 \wedge e_2 \wedge e_3$, but note that
the Rothstein-Poisson bracket on $\Rothstein^\bullet(\Eps)$ differs
from the one in the example above since it depends on the scalar
product. In this case the derived bracket defines the Lie algebra
$\mathfrak{sl}(2)$. We further get
 for $x,y,z \in \mathbb{R}^3$ that
\begin{align*}
   \frac 1 \lambda \ad_{\star} (\Theta)x &= 
   \RothBracket{-e_1\wedge e_2 \wedge e_3, x} = -x^3
   e_2\wedge e_3 + 2 x^2 e_1 \wedge e_3 - x^1 e_1
   \wedge e_2,\\
   \frac 1 \lambda \ad_{\star} (\Theta)(x\wedge y) &= 
   \RothBracket{-e_1\wedge e_2 \wedge
   e_3 , x \wedge y} = 0\\
   \intertext{and}
   \frac 1 \lambda \ad_{\star} (\Theta)(x\wedge y \wedge z) &= \frac
   {\lambda^2} 3 \wedge\circ \calS^3 (-e_1\wedge e_2 \wedge
   e_3)(x\wedge y
   \wedge z) = \frac{\lambda^2}{2} \det(x,y,z).
\end{align*}
This gives us the same results for the cohomology classes as in the
example above.
\end{example}

%
% Lie-Rinehart Pairs
%

\section{An Example: Lie-Rinehart Pairs}
\label{section:Lie-Rinehart-Pairs}

We briefly recall the definition of a Lie-Rinehart pair (see e.g.\
\cite{rinehart:1963a, huebschmann:1990a}). Like in the previous
sections $\ring{R}$ denotes a commutative ring with $\mathbb{Q}
\subseteq \ring{R}$.
\begin{definition}[Lie-Rinehart pair]\label{definition:LieRinehartPairs}
   A Lie-Rinehart pair $(\A,\mathcal{G})$ is a module $\mathcal{G}$ over an
   associative and commutative $\ring{R}$-algebra $\A$, together with
   a Lie bracket $[\cdot,\cdot]$ on $\mathcal{G}$ and a Lie algebra
   homomorphism $\rho: \mathcal{G} \longrightarrow \Der(\A)$ such that
   \[[u, a v] = a [u,v] + \rho(u)a\, v\;\quad\text{ for all $u,v \in
     \mathcal{G}$ and $a\in \A.$}
   \]
\end{definition}

Let now $\LL$ be a finitely generated projective module over $\A$ and
$\LL'$ its dual module. Then $\LplusL$ is also finitely generated and
projective, and the bilinear form given by
\[
\SP{u+\alpha,v+\beta} = \alpha(v) + \beta(u)\qquad \text{for } u,v\in
\LL, \alpha,\beta \in \LL'
\]
is strongly non-degenerate. Choosing a connection on $\LL$ gives us
automatically a metric connection for $\LplusL$. For the following we
will assume that the bilinear form on $\LplusL$ is full. Note that for
the important case where $\LL$ is the space of sections of a Lie
algebroid fullness is automatically given. This preparation allows us
now to consider the graded Lie algebra $\mathcal{C}^\bullet(\LplusL)$
as defined in Proposition~\ref{proposition:CRN-Bracket}, as well as to
construct the Rothstein-Poisson bracket on
$\Rothstein^\bullet(\LplusL)$ as it is described in
Theorem~\ref{theorem:RothsteinPoissonBracket}.  We further introduce a
bigrading on $\Rothstein^\bullet(\LplusL)$ by
\[
  \Rothstein^{a,b}(\LplusL) = \bigoplus_{\substack{p + \mu = a\\ p + \nu
      = b}}\SDer^p(\A)\otimes_\A \Anti_\A^\mu \LL \otimes_\A \Anti_\A^\nu
  \LL',
\]
so
\[
  \Rothstein^r(\LplusL) = \bigoplus_{a + b = r} \Rothstein^{a,b}(\LplusL).
\]
The next proposition is an immediate consequence from the definition
of the Rothstein-Poisson bracket.
\begin{proposition}
  \begin{enumerate}
  \item The Rothstein-Poisson bracket on
    $\Rothstein^{\bullet,\bullet}(\LplusL)$ is of bidegree $(-1,-1)$,
    i.e.
    \begin{equation}
      \RothBracket{\Rothstein^{a,b}(\LplusL), \Rothstein^{c,d}(\LplusL}
      \subseteq \Rothstein^{a+c-1,b+d-1}(\LplusL) .
    \end{equation}
  \item
    The space $\Rothstein^{1,\bullet}(\LplusL) =
    \bigoplus_{a=0}^\infty\Rothstein^{1,a}(\LplusL) $
    is a graded Lie subalgebra of $\Rothstein^\bullet(\LplusL)$.
  \end{enumerate}
\end{proposition}
Consider now the map $\mathcal{J}: \Rothstein^\bullet(\LplusL)
\longrightarrow \MC^\bullet(\LplusL)$ as defined in
Section~\ref{sec:TheIsomorphism}. Since $\mathcal{J}$ is a morphism of
graded Lie algebras, the image of $\Rothstein^{1,\bullet}(\LplusL)$
under $\mathcal{J}$ is a graded Lie subalgebra of
$\MC^\bullet(\LplusL)$.

As the algebraic analog to the setting in
\cite{crainic.moerdijk:2004a} we consider for $r = 1, 2, 3, \ldots$
the space $\CM^r(\LL)$ defined as the subspace of
$\Hom_\ring{R}(\Anti_\ring{R}^{r} \LL,\LL)$ consisting of all maps $m$
such that there exists a map $\sigma_m \in \Hom_\A(\Anti_\A^{r-1}
\LL,\Der(\A)) \cong \Anti_\A^{r-1}\LL' \otimes_\A\Der(\A)$ satisfying
\begin{equation}
   m(x_1,\ldots,x_{r-1}, a x_r) =   a m(x_1,\ldots,x_{r-1}, x_r) +
   \sigma_m(x_1,\ldots,x_{r-1}) a \;  x_r
\end{equation}
for all $x_1,\ldots,x_r \in \Eps$ and $a\in\A$.  Note that we require
here $\sigma_m$ to be $\A$-multilinear.  We finally set $\CM^0(\LL) =
\LL$ and $\CM^\bullet(\LL) = \bigoplus_{r=0}^\infty \CM^r(\LL)$.
\begin{proposition}
   We have an isomorphism
  \begin{equation}
     \mathcal{J}_\LL : \Rothstein^{1,\bullet}(\LplusL)
     \longrightarrow \CM^\bullet(\LL). 
  \end{equation}
  of graded modules.
\end{proposition}
\begin{proof}
   If $\phi \in \Rothstein^{1,\bullet}(\LplusL)$, the grading of the
   Rothstein-Poisson bracket implies that
   $\mathcal{J}(\phi)(u_1,\ldots,u_{r-1}) \in \LL$ for all
   $u_1,\ldots,u_{r-1} \in \LL$, hence we can restrict
   $\mathcal{J}(\phi)$ to an $\ring{R}$-multilinear map on
   $\LL$. Using that $\LL$ is an isotropic subspace of $\LplusL$, the
   definition of $\MC^\bullet(\LplusL)$  shows that the restriction to
   $\LL$ defines an element in $\CM^\bullet(\LL)$. This defines us an
   module homomorphism  $\mathcal{J}_\LL$, which in fact is an isomorphism
   where the inverse can be read off the proof of \cite[Lemma
   1]{crainic.moerdijk:2004a}: For $m\in \CM^r(\LL)$ consider the map
   $\mathrm{L}_m \in \LL \otimes_\A \Anti_\A^r \LL'$ given by
   \[
   \mathrm{L}_m(x_1,\ldots,x_r) = m(x_1,\ldots,x_r) -
   \sum_{i=1}^r (-1)^{r+i}
   \nabla_{\sigma_m(x_1,\ldots,\hat{x}_i,\ldots,x_r)} x_i
   \]
   for $x_1,\ldots,x_r\in \LL$, then $\mathcal{J}_\LL^{-1}(m) =
   (-1)^{\frac{r(r-1)}{2}}\big( \mathrm{L}_m + (-1)^r \sigma_m\big)
   \in \Rothstein^{1,r}(\LplusL) $. 
\end{proof}
Since $\Rothstein^{1,\bullet}(\LplusL)$ is a graded Lie subalgebra,
the isomorphism $\mathcal{J}_\LL$ defines a graded Lie bracket on
$\CM^\bullet(\LL)$. This bracket is the restriction of the bracket on
$\MC^\bullet(\LplusL)$ to $\LL$ and hence specified by $[x,y] = 0$ and
$[x,\mu] = i_x \mu$ for $x,y\in \LL$, $\mu \in \CM^r(\LL)$, and by the
recursion rule $[[\mu,\eta],x] = (-1)^r [[\mu,x],\eta] +
[\mu,[\eta,x]]$ for $\mu \in \CM^r(\LL)$, $\eta \in \CM^\bullet(\LL)$
and $x\in \LL$. One easily shows now  by induction that this bracket
coincides with the Nijenhuis-Richardson bracket
\cite{nijenhuis.richardson:1967a} restricted to $\CM^\bullet(\LL)$,
and hence (up to a sign) with the bracket defined in
\cite{crainic.moerdijk:2004a}. We therefore have:
\begin{corollary}\label{cor:J_L_is_Iso}
   $\CM^r(\LL)$ is closed under the Nijenhuis-Richardson bracket
   $[\cdot,\cdot]_\mathrm{NR}$ and $\mathcal{J}_\LL :
   \Rothstein^{1,\bullet}(\LplusL) \longrightarrow \CM^{\bullet}(\LL)$
   is an isomorphism of graded Lie algebras.
\end{corollary}

Now Lie-Rinehart structures on $\LL$ are in one-to-one correspondence
with homological elements $m\in \CM^2(\LL)$, i.e.\ elements $m\in
\CM^2(\LL)$ satisfying $[m,m]_\mathrm{NR} = 0$. As an immediate
consequence of Corollary~\ref{cor:J_L_is_Iso} we hence get the
following corollary.
\begin{corollary}
   Let $\mu \in \Rothstein^{1,2}(\LplusL)$. Then
   $\mathcal{J}_\LL(\mu)$ defines a Lie-Rinehart structure on
   $\LL$ if and only if $\RothBracket{\mu,\mu} = 0$.
\end{corollary}
As usual we can consider for an homological element $\mu \in
\Rothstein^{1,2}(\LplusL)$  the differential
$\delta_\mu: \Rothstein^{1,\bullet}(\LplusL)\longrightarrow
\Rothstein^{1,\bullet+1}(\LplusL)$ given by $\delta_\mu =
\RothBracket{\mu,\cdot}$, as well as the complex
\begin{equation*}
   \label{eq:TheComplex_LieRinehart}
   \xymatrix{  \LL
     \ar[r]^-{\delta_\mu} & \Rothstein^{1,1}(\LplusL)
     \ar[r]^-{\delta_\mu} &  \Rothstein^{1,2}(\LplusL)
     \ar[r]^-{\delta_\mu} &  \Rothstein^{1,3}(\LplusL)
     \ar[r]^-{\delta_\mu} &  }
   \cdots,
\end{equation*}
and its cohomology $H^\bullet_{CM,\mu}(\LL)$.  The second and third
cohomology classes are relevant for the deformation theory of
Lie-Rinehart pairs.

\begin{remark}
   The differential $\delta_\mu$ defines also a map $\delta_\mu:
   \Rothstein^{0,\bullet}(\LplusL)\longrightarrow
   \Rothstein^{0,\bullet+1}(\LplusL)$. But $\Rothstein^{0,k}(\LplusL) =
   \Anti_\A^k \LL'$, and a little computation shows that $\delta_\mu$
   restricted to $\Anti_\A^\bullet \LL'$ is the differential $\dif_{\LL}$
   of the Lie-Rinehart pair $(\A,\LL)$.  The cohomology of the
   associated complex
   \begin{equation*}
      \label{eq:TheComplex_LieRinehart_dif}
      \xymatrix{  \A  \ar[r]^-{\delta_\mu} & \LL'
        \ar[r]^-{\delta_\mu} & \Anti_\A^2 \LL'
        \ar[r]^-{\delta_\mu} &  \Anti_\A^3 \LL'
        \ar[r]^-{\delta_\mu} &  \Anti_\A^4 \LL'
        \ar[r]^-{\delta_\mu} &  }
      \cdots.
   \end{equation*}
   is then the Lie algebroid cohomology of the Lie-Rinehart pair $(\A,\LL)$.
\end{remark}
\begin{remark}
   The composition $\CM^\bullet(\LL) \longrightarrow
   \Rothstein^\bullet(\LplusL) \longrightarrow
   \MC^\bullet_{\LL}(\LplusL) $
   defines the construction of a Courant structure on $\LplusL$ for a
   given Lie-Rinehart structure on $\LL$, and it is not a surprise
   that this leads to the well known bracket given by
   \begin{equation}
      [(u,\alpha),(v,\beta)] = ([u,v], \dif_{\LL} i_u \beta + i_u
      \dif_\LL \beta - i_v \dif_{\LL} \alpha)
   \end{equation}
   for $u,v \in \LL$ and $\alpha,\beta \in \LL'$.  If especially $\LL =
   \Gamma^\infty(TM)$ for a smooth manifold $M$, then this yields the
   usual Courant bracket on $TM\oplus T^\ast M$.
\end{remark}

Suppose for the following that $\Der(\A)$ is finitely generated and
projective, and let $\star_\kappa$ be a Fedosov star product for
$\Rothstein^{\bullet,\bullet}(\LplusL)$ as defined in
Section~\ref{section:Fedosov}. A slightly more detailed consideration
of the involved degrees shows that the star product in this situation
also respects the bigrading, meaning that for $\phi \in
\Rothstein^{a,b}(\LplusL)$ and $\psi \in \Rothstein^{k,l}(\LplusL)$
one has
\[
\phi\star_\kappa \psi \in \bigoplus_{n\in \mathbb{N}}
\lambda^n \Rothstein^{a+k-n,b+l-n}(\LplusL).
\]
Let $\mu \in \Rothstein^{1,2}(\LplusL)$ define a Lie-Rinehart
structure on $\LL$. So $\RothBracket{\mu,\mu} = 0$ and hence
\[
\tfrac 1 2\,\mu \star_\kappa \mu = \lambda^2 C_2(\mu,\mu) +  \lambda^3
C_3(\mu,\mu) + \ldots
\]
However, due to the bihomogeneity of the star product it follows that
already the third order term $C_3(\mu,\mu) \in
\Rothstein^{-1,1}(\LplusL) = \{0\}$ vanishes, so there remains only
the second order term. In particular, if we choose $\kappa =
\frac{1}{2}$, then Lemma~\ref{lemma:Fedosov-Weyl-Typ} implies that the
second order term vanishes as well. In this case it therefore follows
that $\mu \star_{\scriptscriptstyle{\frac{1}{2}}} \mu = 0$ without any
higher order corrections. A similar result was obtained in \cite[Satz
4.1]{eilks:2004a}, but there for the standard ordered
star product, i.e.\ for the case $\kappa = 0$.
\begin{theorem}
    \label{theorem:LieRinehartQuantizable}%
    The Courant algebroid structure arising from a Lie-Rinehart
    structure allows for a deformation quantization in the sense of
    Definition~\ref{def:quantisierungI}.
\end{theorem}

%
% references
%
\section*{Bibliography}
\begin{footnotesize}
    \renewcommand{\arraystretch}{0.5}
%    \bibliographystyle{ewde}
%    \bibliography{dqarticle,dqbook,dqprocentry,dqproceeding,dqthesis,misc,preprints,courant}

\end{footnotesize}

\end{document}